\theoremstyle{plain}
\newtheorem{theorem}{Theorem}[section]
\newtheorem{lemma}[theorem]{Lemma}
\newtheorem{corollary}[theorem]{Corollary}
\newtheorem{proposition}[theorem]{Proposition}
\newtheorem*{claim*}{Claim}
\newtheorem*{subclaim*}{Subclaim}
\theoremstyle{definition}
\newtheorem{definition}[theorem]{Definition}
\newtheorem{remarks}[theorem]{Remarks}
\newtheorem{question}[theorem]{Question}
\newcommand{\betrag}[1]{\vert{#1}\vert}
\newcommand{\dom}[1]{{{\rm{dom}}(#1)}}
\newcommand{\ran}[1]{{{\rm{ran}}(#1)}}
\newcommand{\map}[3]{{#1}:{#2}\longrightarrow{#3}}
\newcommand{\Set}[2]{\{{#1}~\vert~{#2}\}}
\newcommand{\seq}[2]{\langle{#1}~\vert~{#2}\rangle}
\newcommand{\goedel}[2]{{\prec}{#1},{#2}{\succ}}
\newcommand{\anf}[1]{{\text{``}\hspace{0.3ex}{#1}\hspace{0.3ex}\text{''}}}
\newcommand{\HH}[1]{{\rm{H}}(#1)}
\newcommand{\Add}[2]{{\rm{Add}}({#1},{#2})}
\newcommand{\id}{{\rm{id}}}
\newcommand{\On}{{\rm{On}}}
\newcommand{\LL}{{\rm{L}}}
\newcommand{\ZFC}{{\rm{ZFC}}}
\newcommand{\BPFA}{{\rm{BPFA}}}
\newcommand{\DDD}{{\mathbb{D}}}
\newcommand{\EEE}{{\mathbb{E}}}
\newcommand{\PPP}{{\mathbb{P}}}
\newcommand{\RRR}{{\mathbb{R}}}
\newcommand{\VV}{{\rm{V}}}
\newcommand{\calA}{\mathcal{A}}
\newcommand{\calL}{\mathcal{L}}
 \newcommand{\CH}{{\rm{CH}}}
\newcommand{\diag}{{\begin{picture}(7,7) \put(1,1){\line(1,0){5}} \put(6,1){\line(0,1){5}} \put(1,1){\line(1,1){5}} \end{picture}\hspace{1.1pt}}}
\newcommand{\Loo}{\mathcal{L}_{\omega_1,\omega}}
\newcommand{\M}{\ensuremath{\mathcal{M}}}
\newcommand{\N}{\ensuremath{\mathcal{N}}}
\renewcommand{\aa}{\aleph_{\alpha}} 
\newcommand{\aapo}{\aleph_{\alpha+1}}
\title{Non-absoluteness of Hjorth's Cardinal Characterization}
\author[P. L\"ucke]{Philipp L\"ucke}
\address[Philipp L\"ucke]{Institut de Matem\`{a}tica, Universitat de Barcelona, 
Gran via de les Corts Catalanes 585,
08007 Barcelona, Spain.}
\email{philipp.luecke@ub.edu}
\author[I. Souldatos]{Ioannis Souldatos}
\address[Ioannis Souldatos]{Department of Mathematics,
Aristotle University of Thessaloniki, Thessaloniki 54124, Greece}
\email{souldatos@math.auth.gr}
\thanks{The authors would like to thank Paul Larson for bringing the results of {\cite[Section 6]{MR2298476}} and {\cite[Section 4]{MR792822}} to their attention. 
This project has received funding from the European Union’s Horizon 2020 research and innovation programme under the Marie Sk{\l}odowska-Curie grant agreement No 842082 of the first author (Project \emph{SAIFIA: Strong Axioms of Infinity -- Frameworks, Interactions and Applications}).}
\subjclass[2020]{03C55, 03E35; 03C75, 03C15, 03C35, 03E57} 
\keywords{Infinitary Sentences, Characterizing Cardinals, Non-absoluteness, Almost disjoint families of functions, Bounded Proper Forcing Axiom}
\begin{document}

\begin{abstract} 
In \cite{HjorthsKnightsModel},  Hjorth proved  that for every countable ordinal  $\alpha$, there exists a complete $\Loo$-sentence $\phi_\alpha$ that has models of all cardinalities less than or equal to $\aa$, but no models of cardinality  $\aapo$.  
Unfortunately, his solution does not yield a single $\Loo$-sentence $\phi_\alpha$, but a  set of $\Loo$-sentences, one of which is guaranteed to work. It was conjectured in \cite{CharacterizableCardinals} that it is independent of the axioms of $\ZFC$ which of these sentences has the desired property. 

In the present paper, we prove that this conjecture is true. 
More specifically, we isolate a diagonalization principle for functions from $\omega_1$ to $\omega_1$ which is a consequence of the \emph{Bounded Proper Forcing Axiom} ($\BPFA$) and then we use this principle to  prove that Hjorth's solution to characterizing $\aleph_2$ in models of $\BPFA$ is different than in models of $\CH$. 
In addition, we show that large cardinals are not needed to obtain this independence result by 
proving that 
our diagonalization principle can be forced over models of $\CH$.
%
\end{abstract}

\maketitle


\section{Introduction}

The present paper contributes to the study of the following model-theoretic concepts:

\begin{definition} 
 \begin{enumerate}
     \item An $\Loo$-sentence $\psi$ \emph{characterizes} an infinite cardinal $\kappa$, if $\psi$ has models in all infinite cardinalities less or equal to $\kappa$, but no models in cardinality $\kappa^+$. 
     
     \item A countable model characterizes some cardinal $\kappa$, if the same is true for its Scott sentence. 
 \end{enumerate}
\end{definition}

We are interested in the following question: Given $\alpha<\omega_1$,  is there a complete $\Loo$-sentence $\psi_\alpha$ that characterizes $\aleph_\alpha$? Although the problem is quite easy to solve if we allow the $\Loo$-sentence to be incomplete, it poses a genuine challenge as stated. The question  was answered in the affirmative by  Hjorth who proved the following theorem.

\begin{theorem}[{\cite[Theorem 1.5]{HjorthsKnightsModel}}]\label{hjorth}
 For every $\alpha<\omega_1$, there exists a complete $\Loo$-sentence that characterizes $\aleph_\alpha$.
\end{theorem}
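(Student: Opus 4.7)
The plan is to proceed by transfinite induction on $\alpha < \omega_1$, building $\psi_\alpha$ so that it has models in every infinite cardinality $\leq \aleph_\alpha$ but none in cardinality $\aleph_{\alpha+1}$. The base case $\alpha = 0$ is handled by taking the Scott sentence of a rigid countable structure such as $(\omega,<)$: this is a complete $\Loo$-sentence whose only models (up to isomorphism) are countable, so it characterizes $\aleph_0$.

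For the successor step, assuming $\psi_\alpha$ characterizes $\aleph_\alpha$, I would construct $\psi_{\alpha+1}$ as a multi-sorted sentence with a ``base'' part $B$ satisfying $\psi_\alpha$ (forcing $|B|\leq\aleph_\alpha$), a distinguished countable subset $B_0\subseteq B$, and a ``top'' part $T$ in which every $t\in T$ is coded by a function $f_t\colon B_0 \to B$, together with axioms demanding that $\{f_t : t\in T\}$ is pairwise almost disjoint. The tree construction on $|B|^{<\omega}$ provides an almost disjoint family of size $|B|^+$, so models of size $\aleph_{\alpha+1}$ exist; conversely, under favourable cardinal arithmetic (essentially $\aleph_\alpha^{\aleph_0}=\aleph_{\alpha+1}$), any such family has size at most $\aleph_{\alpha+1}$, ruling out models of size $\aleph_{\alpha+2}$. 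Auxiliary predicates pinning down a single countable isomorphism type (Scott-sentence style) yield completeness.

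For limit $\alpha<\omega_1$, fix a cofinal sequence $\alpha_n\nearrow\alpha$ and take the sentences $\psi_{\alpha_n}$ from the induction hypothesis. I would amalgamate them into a single sentence whose models partition into an $\omega$-indexed family of substructures, with the $n$-th piece satisfying $\psi_{\alpha_n}$, together with gluing axioms guaranteeing that every element lies in some piece. This forces the global cardinality to be $\sup_n |\text{piece}_n| \leq \aleph_\alpha$, and varying the sizes of the pieces realizes all intermediate cardinalities; completeness is again enforced by canonical Scott-style data describing the countable amalgam.

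The principal obstacle is the successor step: the combinatorics of almost disjoint families of functions from $\omega$ into $\aleph_\alpha$ is sensitive to the ambient set-theoretic universe (it depends on $\aleph_\alpha^{\aleph_0}$ and related cardinal invariants), so the coding scheme implicit in $\psi_{\alpha+1}$ cannot be fixed in a single canonical way that works in all models of $\ZFC$. Hjorth's actual argument handles this by producing, at each successor stage, a finite list of candidate sentences and showing that in any model of $\ZFC$ at least one of them realises the characterization — which is precisely the non-canonicity that the present paper exploits to establish its independence results.
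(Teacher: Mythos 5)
Your successor step contains the essential gap, and you name it yourself but do not close it. The sentence you describe is only shown to characterize $\aleph_{\alpha+1}$ under an auxiliary cardinal-arithmetic hypothesis (essentially $\aleph_\alpha^{\aleph_0}\leq\aleph_{\alpha+1}$), and that hypothesis is not a theorem of $\ZFC$: if $2^{\aleph_0}>\aleph_{\alpha+1}$, then already the functions from your countable set $B_0$ into $\omega\subseteq\aleph_\alpha$ contain an almost disjoint family of size $2^{\aleph_0}$, so your upper-bound argument ruling out models of size $\aleph_{\alpha+2}$ collapses; conversely, the lower-bound (existence of models of size $\aleph_{\alpha+1}$) for the \emph{complete} sentence is exactly the delicate point, as Theorem \ref{nofull} in the paper illustrates. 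Your closing paragraph resolves this by appealing to ``Hjorth's actual argument'' with its finite list of candidate sentences, but that argument is precisely the theorem to be proved, so what you have written is a conditional construction plus a citation, not a proof. The actual argument proceeds differently: from a countable $\M$ characterizing $\aleph_\alpha$ one forms the $(\M,\N)$-full model (a Fra\"{\i}ss\'e-style generic), proves its Scott sentence characterizes either $\aleph_\alpha$ or $\aleph_{\alpha+1}$, and in the first case uses that very failure to show a second construction characterizes $\aleph_{\alpha+1}$; the existence claim is established in $\ZFC$ by this dichotomy, without any cardinal-arithmetic assumption, at the price of not knowing which sentence works (the non-canonicity this paper exploits).

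A second, related weakness is completeness. Asserting pairwise almost-disjointness of the coded functions plus ``Scott-style auxiliary predicates'' does not by itself yield a complete $\Loo$-sentence: the problem is easy for incomplete sentences, and the whole difficulty is to arrange that the characterizing sentence is the Scott sentence of a single countable model while still admitting models of size $\aleph_{\alpha+1}$. This is why Hjorth works with generic (full) structures and why the reformulation in Lemma \ref{EquivalentLemma} demands the finite-extension and colouring properties, not just almost-disjointness; your proposal does not address how these genericity requirements are met. The limit step (disjoint unions along a cofinal $\omega$-sequence) is fine and matches the construction described in the paper.
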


Unfortunately, Hjorth's solution is unsatisfactory. As observed in \cite{BKLdap}, for every countable ordinal $\alpha$,  Hjorth produces not a single $\Loo$-sentence, but a whole set $S_\alpha$ of $\Loo$-sentences.\footnote{There is no mention of a set $S_\alpha$ in Hjorth's original proof. This notation was introduced in \cite{BKLdap}.} In \cite{BKLdap}, the authors notice that if $\alpha$ is a  finite ordinal, then the set $S_\alpha$ is finite. Otherwise, they state that the set $S_\alpha$  of  $\Loo$-sentences can be chosen to be countable. Below, we will argue that it is possible to find a finite set $S_\alpha$ of $\calL_{\omega_1,\omega}$-sentences with the desired properties for every   countable ordinal $\alpha$.  Hjorth's proof then shows that at least one of the sentences in the set $S_\alpha$  characterizes the cardinal $\aleph_\alpha$, but it provides no evidence which element of $S_\alpha$ has this property.  

To see why this is the case, we briefly explain Hjorth's construction behind Theorem \ref{hjorth}. First, assume that for some countable ordinal $\alpha$, there is a countable model $\M$ whose Scott sentence characterizes $\aleph_\alpha$.   Working by induction, Hjorth wants to create another countable model whose Scott sentence characterizes 
$\aleph_{\alpha+1}$. 
To achieve this, he first defines a countable model, which he calls \emph{$(\M,\N)$-full}, 
using $\M$ and what we will call the \emph{first Hjorth construction}.\footnote{In Hjorth's proof, the existence of the model 
$\M$ is an assumption. The existence of $\N$ is something that comes out of the proof.}  Hjorth proves that the Scott 
sentence of this 
$(\M,\N)$-full model characterizes either $\aleph_\alpha$ or $\aleph_{\alpha+1}$. If the latter is the case, we are done. 
Otherwise, Hjorth proceeds one more round to use the $(\M,\N)$-full model from the first step and what we will call the \emph{second Hjorth construction}. If the $(\M,\N)$-full model characterizes some $\kappa$, then Hjorth's second construction 
characterizes $\kappa^+$. In particular, if the $(\M,\N)$-full model characterizes $\aleph_\alpha$, then the second Hjorth 
construction produces a model that characterizes $\aleph_{\alpha+1}$. Notice here that the failure of the $(\M,\N)$-full model to characterize 
$\aleph_{\alpha+1}$ is used to prove that the second Hjorth construction does indeed characterize $\aleph_{\alpha+1}$.  %
In either case, there exists some $\Loo$-sentence that characterizes 
$\aleph_{\alpha+1}$ and the induction step is complete.

At limit stages, Hjorth takes disjoint unions of the previously constructed models. For instance for $\alpha=\omega$, Hjorth considers the disjoint union of countable models $\M_n$, $n<\omega$, where each $\M_n$ characterizes $\aleph_n$. This union characterizes $\aleph_\omega$, but, as we mentioned, we do not know which the models $\M_n$ are.

Since at successor stages we have to choose between the first and the second Hjorth construction and we repeat this process for every countable successor ordinal, the result is a binary tree of $\Loo$-sentences of height $\omega_1$. 
The $\alpha^{th}$ level of the tree gives us the set $S_\alpha$. In particular, at least one of the sentences at the $\alpha^{th}$ level of the tree characterizes $\aa$.

We now briefly observe that we can do slightly better for $\alpha=\omega$ and for each countable limit ordinal $\alpha$ in general. Consider the following countable models: $\M_0$ is a countable model which characterizes $\aleph_0$ and $\M_{n+1}$ is the second Hjorth construction which inductively uses $\M_n$ as input. If $\M_n$ characterizes some $\aleph_m$, then we mentioned that the first Hjorth construction characterizes either $\aleph_m$ or $\aleph_{m+1}$. It follows that the second Hjorth construction characterizes either $\aleph_{m+1}$ or $\aleph_{m+2}$. Therefore, we can prove inductively that for each $n<\omega$, the model $\M_{n+1}$ characterizes some $\aleph_k$ for $n< k\le 2(n+1)$. Although we will prove that the value of $k$ is independent of $\ZFC$, the disjoint union of the $\M_n$'s always characterizes $\aleph_\omega$. In other words, we can isolate one $\Loo$-sentence that belongs to the set $S_\omega$ and which provably characterizes $\aleph_\omega$.  
A similar argument applies to all countable limit ordinals $\alpha$: There exists one $\Loo$-sentence that belongs to the set $S_\alpha$ and which provably characterizes $\aleph_\alpha$. This greatly reduces the complexity of the tree whose levels are the $S_\alpha$'s, as we can assume that for limit $\alpha$ the set $S_\alpha$ is a singleton.

The problem for successor $\alpha$ remains and in \cite{CharacterizableCardinals}, it was conjectured that it is independent of the axioms of $\ZFC$ whether the Scott sentence of the 
$(\M,\N)$-full model characterizes $\aleph_\alpha$ or $\aleph_{\alpha+1}$. This would imply that it is independent of  $\ZFC$  whether the first or the second Hjorth construction characterizes $\aleph_{\alpha+1}$. Some evidence towards the validity of this conjecture was given by  the following result.

\begin{theorem}[{\cite[Theorem 2.20]{CharacterizableCardinals}}]\label{nofull}
 If $\M$ is a countable model that characterizes $\aa$ and $\aleph_{\alpha}^\omega=\aleph_{\alpha}$, then there is no $(\M,\N)$-full structure of size 
$\aleph_{\alpha+1}$.  
\end{theorem}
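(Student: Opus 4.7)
The plan is to argue by contradiction: suppose $\calA$ is an $(\M,\N)$-full structure of cardinality $\aleph_{\alpha+1}$, and use the cardinal arithmetic hypothesis $\aleph_{\alpha}^{\omega}=\aleph_{\alpha}$ to produce too few elements. The intuition is that Hjorth's first construction codes each new element of the full structure by a countable tuple drawn from (a copy of) $\M$ together with countable sequences of previously existing elements. If the underlying copy of $\M$ inside $\calA$ is forced by the characterization hypothesis to have size at most $\aa$, and every element of $\calA$ is pinned down by a countable code over this copy, then the cardinal arithmetic forces $|\calA|\leq \aa^{\omega}=\aa$, which contradicts $|\calA|=\aleph_{\alpha+1}$.

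Concretely, I would carry out the following steps. First, unpack the definition of $(\M,\N)$-full to isolate a distinguished substructure $\M^{*}\subseteq\calA$ that is a model of the Scott sentence of $\M$; by the assumption that the Scott sentence of $\M$ characterizes $\aa$, this forces $|\M^{*}|\leq\aa$. Second, extract from the fullness condition an assignment $a\mapsto c(a)$ which sends each $a\in\calA$ to a countable code: a finite tuple from $\M^{*}$ together with a countable sequence of elements of $\calA$ selected from the existing \emph{predecessors} of $a$ in some wellfounded stratification provided by the construction. Third, by a straightforward induction up this stratification, show that the total number of possible codes is bounded by $|\M^{*}|^{\omega}\leq \aa^{\omega}=\aa$. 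Finally, verify that the map $c$ is injective (or at least countable-to-one) on $\calA$, so that $|\calA|\leq\aa\cdot\aleph_{0}=\aa$, contradicting the assumed cardinality $\aleph_{\alpha+1}$.

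The main obstacle I anticipate is step three: making precise that the code assignment is indeed injective (modulo countable ambiguity) and that the recursion terminates at a wellfounded rank bounded by $\omega_{1}$. Depending on the exact formulation of $(\M,\N)$-fullness, one may need to handle elements whose codes reference themselves or each other cyclically; this is typically resolved by exploiting the canonical tree of approximations built into Hjorth's construction. Care must also be taken that the copy $\M^{*}$ is genuinely unique within $\calA$ (or that the argument is insensitive to the choice), since the fullness axioms guarantee only the presence of such a copy, not its uniqueness. Once these definability issues are settled, the counting argument is routine and relies only on the cardinal arithmetic assumption $\aa^{\omega}=\aa$.
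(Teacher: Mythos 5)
Your counting target (at most $\aa^{\omega}=\aa$ many ``countable codes'' against $\aleph_{\alpha+1}$ many elements) is the right shape, but the central step of your argument --- extracting a countable-to-one code assignment from ``a wellfounded stratification provided by the construction'' and from ``the canonical tree of approximations built into Hjorth's construction'' --- has no basis, and this is a genuine gap. The theorem quantifies over \emph{arbitrary} $(\M,\N)$-full structures of size $\aapo$; fullness is a {F}ra\"{\i}ss\'{e}-style extension/genericity property (see the remarks after Lemma \ref{EquivalentLemma}), so such a structure carries no wellfounded rank, no notion of ``predecessors'' of an element, and no canonical approximation tree, and the copy of $\M$ inside it need not be unique. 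The obstacles you flag at the end (injectivity of the code map, termination of the recursion, uniqueness of $\M^{*}$) are therefore not technicalities to be settled later: they are exactly the points where the proposed proof fails, and nothing in the definition of fullness supplies the data you would need to carry them out.

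What actually drives the result is the finite agreement property, which your proposal never invokes. Using the reformulation in Lemma \ref{EquivalentLemma} (only direction (1)$\Rightarrow$(2), and in fact only clause \eqref{item:FiniteAgree}), an $(\M,\N)$-full structure of size $\aapo$ yields a function $\map{c}{[\omega_{\alpha+1}]^2}{\omega_{\alpha}}$ such that $\calA^c_{\beta,\gamma}$ is finite for all $\beta\neq\gamma$. Code each ordinal $\gamma$ with $\omega\leq\gamma<\omega_{\alpha+1}$ by the countable sequence $\seq{c(n,\gamma)}{n<\omega}\in{}^{\omega}\omega_{\alpha}$. Since there are only $\aa^{\omega}=\aa<\aleph_{\alpha+1}$ possible codes, two distinct ordinals $\gamma\neq\gamma'$ in $[\omega,\omega_{\alpha+1})$ receive the same code, whence $\omega\subseteq\calA^c_{\gamma,\gamma'}$, contradicting finite agreement. (Model-theoretically: the colours of a point against a fixed countably infinite set of other points form the countable code, and almost disjointness of the agreement sets gives injectivity off that fixed set.) So the correct argument is a short pigeonhole once finite agreement is isolated; the elaborate codes over $\M^{*}$ built from ``predecessors'' in Hjorth's construction are neither available for an arbitrary full structure nor needed.
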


In contrast, for the case $\alpha=0$, Hjorth proves 
that the $(\M,\N)$-full model characterizes $\aleph_1$. The proof works both under $\CH$ and its negation, but it uses 
results from descriptive set  theory that cannot be used to prove the statement for cardinals bigger than $\aleph_1$.

The purpose of this paper is to prove  the above conjecture 
by showing that the axioms of $\ZFC$ do not answer the given question for $\alpha=1$. 
By Theorem \ref{nofull}, it is relatively consistent that for every  countable model $\M$ that characterizes $\aleph_1$, there is no $(\M,\N)$-full structure of size $\aleph_2$. In the following, we will prove that the negation of this statement is also relatively consistent. 
 %
In Section \ref{reform},  we discuss a result from \cite{CharacterizableCardinals} that shows that the existence of an $(\M,\N)$-full structure of size $\aleph_{\alpha+1}$ is equivalent to the existence of a colouring of the two-element subsets of $\omega_{\alpha+1}$ with $\aleph_\alpha$-many colors that possesses certain almost disjointedness and genericity properties.  
In Section \ref{diagonal}, we isolate a combinatorial principle $(\diag)$ and  prove that it is a consequence of the \emph{Bounded Proper Forcing Axiom} $\BPFA$ (see \cite{MR1324501}). In Section \ref{nobpfa}, we then show that the consistency of $\ZFC+(\diag)$ can be established from the consistency of $\ZFC$ alone\footnote{Note that the results of \cite{MR1324501} show that the consistency strength of $\BPFA$ lies strictly between the existence of an inaccessible cardinal and the existence of a Mahlo cardinal.} by showing that the principle $(\diag)$ can be forced over models of $\CH$. Finally, in Section \ref{coloring}, we prove that the  principle $(\diag)$ implies the existence of an $(\M,\N)$-full structure of size $\aleph_{2}$. 
We then end the paper by discussing possibilities to characterize cardinals in an \emph{absolute} way and we propose two ways to formulate this concept in a mathematically sound way.


\section{A reformulation} \label{reform}

In the following, we recall the statement of a result in  \cite{CharacterizableCardinals} which  provides an equivalent condition to the 
existence of an $(\M,\N)$-full model of size $\aleph_{\alpha+1}$. Since  it is easier to work with  this equivalent condition, we will use it for the rest of this paper.

We start with some notation conventions. Given a set $d$, we let $[d]^2$ denote the set of all two-element subsets of $d$ and we let $[d]^{{<}\omega}$ denote the set of all finite subsets of $d$.  Moreover, if $c$ is a function whose domain is of the form $[d]^2$ for some set $d$, then we abbreviate $c(\{x,y\})$ by $c(x,y)$. In addition, given such a function $c$ with domain $[d]^2$ and $x,y\in d$ with $x\neq y$, we define $$\calA^c_{x,y} ~ = ~ \Set{z\in d\setminus\{x,y\}}{c(x,z)=c(y,z)}$$ to be the corresponding \emph{set of agreements}. Finally, given sets $d\subseteq  d'$, a function $c$ with domain $[d]^2$ and a function $c'$ with domain $[d']^2$ and $c'\restriction[d]^2=c$, we say that \emph{$c'$ introduces no new agreement over $c$}, if 
 $\calA^c_{x,y}=\calA^{c'}_{x,y}$ holds for all $x,y\in d$.

\begin{lemma}[{\cite[Theorem 5.1]{CharacterizableCardinals}}]\label{EquivalentLemma} 
 Assume that $\M$ is a countable model that characterizes $\aleph_{\alpha}$.  Then the  following statement are equivalent:
 \begin{enumerate}
  \item There exists an $(\M,\N)$-full structure of cardinality $\aleph_{\alpha+1}$. 

  \item There exists a function  $\map{c}{[\omega_{\alpha+1}]^2}{\omega_{\alpha}}$ and a function $\map{r}{\omega_{\alpha+1}}{\omega_{\alpha+1}}$ with the following properties: 
   \begin{enumerate}
	
	\item\label{item:FiniteAgree} (Finite agreement) For all $\beta<\gamma<\omega_{\alpha+1}$, the set $\calA^c_{\beta,\gamma}$ is finite.\footnote{Note that we allow the possibility that the set $\calA^c_{\beta,\gamma}$ is empty.}
	
	\item\label{item:FiniteClosure} (Finite closure) For every  $a\in[\omega_{\alpha+1}]^{{<}\omega}$, there is  $a\subseteq b\in[\omega_{\alpha+1}]^{{<}\omega}$ that is closed under $\calA^c$, {i.e.} for all $\beta,\gamma\in b$ with $\beta\neq\gamma$, we have  $\calA^c_{\beta,\gamma}\subseteq b$. 
	
	\item\label{item:FiniteExt} (Finite extension) If $d$ is a finite set and $\map{e}{[d]^2}{\omega_{\alpha}}$ is a function with $e\restriction[d\cap\omega_{\alpha+1}]^2=c\restriction[d\cap\omega_{\alpha+1}]^2$ that introduces no new agreements over $c\restriction[d\cap\omega_{\alpha+1}]^2$, then there exists an injection $\map{\iota}{d}{\omega_{\alpha+1}}$ with $\iota\restriction(d\cap\omega_{\alpha+1})=\id_{d\cap\omega_{\alpha+1}}$ and $e(\beta,\gamma)=c(\iota(\beta),\iota(\gamma))$ for all $\beta,\gamma\in d$ with $\beta\neq\gamma$. 

	\item\label{item:Color} (Colouring) The statement \eqref{item:FiniteExt}  holds true even if $d$ is \emph{colored}, {i.e.} if, in addition, there exists a  function $\map{s}{d}{\omega_{\alpha+1}}$ with $s\restriction (d\cap\omega_{\alpha+1})=r\restriction(d\cap\omega_{\alpha+1})$, then there exists an injection $\map{\iota}{d}{\omega_{\alpha+1}}$ with the above properties that also satisfies  $s(\beta)=r(\iota(\beta))$ for all $\beta\in d$.
	\end{enumerate}
\end{enumerate}
\end{lemma}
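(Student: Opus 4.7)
The plan is to prove the two implications by explicitly translating between the model-theoretic data of an $(\M,\N)$-full structure and the combinatorial data of the pair $(c,r)$. Since the statement essentially reformulates a structural/amalgamation property of $(\M,\N)$-fullness in terms of a coloring on $[\omega_{\alpha+1}]^2$, I would organize the proof around a canonical dictionary between the two sides.

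For the direction \emph{(1) implies (2)}, I would fix an $(\M,\N)$-full structure $\calF$ of cardinality $\aleph_{\alpha+1}$, enumerate its universe as $\seq{a_\beta}{\beta<\omega_{\alpha+1}}$, and define $c(\beta,\gamma)$ to be an invariant (chosen from a pool of size $\aleph_\alpha$) that records the isomorphism type of the substructure of $\calF$ generated by $\{a_\beta,a_\gamma\}$, and $r(\beta)$ to be an invariant recording the analogous type of $a_\beta$. The set of agreements $\calA^c_{\beta,\gamma}$ then corresponds to the third parties that interact identically with $a_\beta$ and with $a_\gamma$; that the $(\M,\N)$-full condition forces this set to be finite is where the sparseness built into Hjorth's construction enters, and it gives \eqref{item:FiniteAgree}. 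The finite closure \eqref{item:FiniteClosure} follows by iteratively adjoining the (finitely many) agreements of pairs in a finite set; the process terminates because each pair contributes only finitely many new elements and $\calF$ is rigid enough that no cascade can occur. Finally, \eqref{item:FiniteExt} and \eqref{item:Color} are exactly the amalgamation/extension properties that define fullness: a coloring $e$ on a finite $d$ extending $c$ without new agreements describes a finite configuration consistent with being realized inside $\calF$, and the homogeneity of $\calF$ produces the required embedding $\iota$, with $s$ and $r$ matching when the color data is respected.

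For the direction \emph{(2) implies (1)}, I would build $\calF$ on universe $\omega_{\alpha+1}$ by using $c$ and $r$ to specify, for each (singleton or pair of) indices, the appropriate piece of $\M$-type or $\N$-type structure, amalgamating these pieces along the pattern dictated by the agreements $\calA^c$. The finite closure \eqref{item:FiniteClosure} guarantees that the structure on any finite $b\subseteq\omega_{\alpha+1}$ is well-defined without having to consult unboundedly many third parties. The finite extension property \eqref{item:FiniteExt} supplies the ``genericity'' clause in the definition of $(\M,\N)$-full: whenever a finite partial realization over $\calF$ arises, \eqref{item:FiniteExt} lets us embed it coherently inside $\calF$, and the colored version \eqref{item:Color} does the same in the presence of the additional type-parameter supplied by $r$. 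Putting these together yields an $(\M,\N)$-full structure of the required cardinality.

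The main obstacle will be verifying that the finiteness conditions \eqref{item:FiniteAgree}--\eqref{item:FiniteClosure} and the genericity conditions \eqref{item:FiniteExt}--\eqref{item:Color} are tight matches for the defining clauses of $(\M,\N)$-fullness: too strong a coloring condition would not be achievable from a given full structure, while too weak a one would fail to produce a genuinely full model. To handle this, I would work through the precise clauses of fullness (the $\M$-skeleton, the $\N$-overlay, and the amalgamation rule) and show, clause by clause, that each translates to one of the four combinatorial items, relying on the fact that the relevant ``local'' information in $\calF$ about any finite set of elements can be encoded using only $\aleph_\alpha$ many colors because $\M$ is countable. The secondary technical point is to ensure that the single function $r$ is sufficient to record all the unary type information one needs; this is where I would mimic the device from \cite{CharacterizableCardinals} of folding multiple unary parameters into one function via a fixed bijection $\omega_{\alpha+1}\times\omega\to\omega_{\alpha+1}$.
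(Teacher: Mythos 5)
First, a point of context: the paper you are working from does not prove this lemma at all --- it is quoted verbatim from {\cite[Theorem 5.1]{CharacterizableCardinals}}, and the only original content here is the remark that the hypothesis ``$e$ introduces no new agreements over $c\restriction[d\cap\omega_{\alpha+1}]^2$'' in \eqref{item:FiniteExt} was erroneously omitted in the original source and is in fact needed for the equivalence. So there is no in-paper argument to compare yours against; what can be judged is whether your text constitutes a proof, and it does not. It is a proof plan in which every substantive verification is deferred. You never state the definition of an $(\M,\N)$-full structure (the $\M$-skeleton, the role of $\N$, and the Fra\"{\i}ss\'{e}-limit genericity clause), yet the entire content of the lemma is the clause-by-clause translation between that definition and items \eqref{item:FiniteAgree}--\eqref{item:Color}; you yourself flag this translation as ``the main obstacle'' and then do not carry it out.

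Concretely: (i) in the direction (1)$\Rightarrow$(2) you propose to let $c(\beta,\gamma)$ record the isomorphism type of the substructure generated by $\{a_\beta,a_\gamma\}$, but you give no reason why such an invariant should have finite agreement sets --- ``the sparseness built into Hjorth's construction'' is an appeal to the conclusion, not an argument, and it is far from clear that two-generated isomorphism types are the right dictionary rather than the pair-function that the full structure itself carries; (ii) your justification of finite closure \eqref{item:FiniteClosure} --- ``each pair contributes only finitely many new elements and $\calF$ is rigid enough that no cascade can occur'' --- is not a proof: iteratively adjoining finitely many elements at each stage can perfectly well fail to terminate, so finite closure must be extracted from a specific feature of the construction, which you do not identify; (iii) in the direction (2)$\Rightarrow$(1) the amalgamation of ``pieces of $\M$-type or $\N$-type structure along the pattern dictated by $\calA^c$'' is exactly the step where the no-new-agreements hypothesis in \eqref{item:FiniteExt} becomes essential (this is the delicate point the paper's remark highlights), and your sketch mentions the hypothesis only in passing without showing where it is used or why the construction would break without it. Until these three points are worked out against the actual definition of $(\M,\N)$-fullness, the argument has a genuine gap rather than a routine omission.
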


\begin{remarks}
 \begin{enumerate}
  \item The requirement in \eqref{item:FiniteExt} that ``\emph{$e$ introduces no new agreements over $c\restriction[d\cap\omega_{\alpha+1}]^2$}'' was erroneously omitted in \cite{CharacterizableCardinals}, but is needed for the equivalence. 
  
  \item Since $(\M,\N)$-full structures are defined as {F}ra\"{\i}ss\'{e} limits, they are sufficiently \emph{generic}. The  finite extension property \eqref{item:FiniteExt} is an expression of this genericity. 
  
  \item It follows from \eqref{item:FiniteExt} that the function $c$ is surjective.  Given $\beta< \omega_\alpha$, consider the unique function $\map{e}{[\{\omega_{\alpha+1},\omega_{\alpha+1}+1\}]^2}{\omega_\alpha}$ with the property that  $e(\omega_{\alpha+1},\omega_{\alpha+1}+1)=\beta$. Then the assumptions of \eqref{item:FiniteExt} are satisfied and we find an injection $\map{\iota}{\{\omega_{\alpha+1},\omega_{\alpha+1}+1\}}{\omega_{\alpha+1}}$ with the property that  $c(\iota(\omega_{\alpha+1}),\iota(\omega_{\alpha+1}+1))=\beta$. 
  
  \item The existence of the coloring function $r:\omega_{\alpha+1}\rightarrow\omega_{\alpha+1}$ is important for Hjorth's argument, but once one has constructed a function $\map{c}{[\omega_{\alpha+1}]^2}{\omega_\alpha}$ satisfying the statements \eqref{item:FiniteAgree}, \eqref{item:FiniteClosure} and \eqref{item:FiniteExt} of the above theorem, it is easy to modify this function to also obtain a function $\map{r}{\omega_{\alpha+1}}{\omega_{\alpha+1}}$ such that statement \eqref{item:Color} holds too. 
 \end{enumerate}
\end{remarks}

\section{A diagonalization principle}\label{diagonal}

We now introduce and study a diagonalization principle for families of functions from $\omega_1$ to $\omega_1$ that will be central for the independence results of this paper. 
In Section \ref{coloring}, we will use Lemma  \ref{EquivalentLemma} to prove that this principle implies the existence of an 
$(\M,\N)$-full structure of size $\aleph_2$.

\begin{definition}
 \begin{enumerate}
     \item Given a set $X$, we say that a map $\map{m}{[X]^{{<}\omega}}{[X]^{{<}\omega}}$ is \emph{monotone} if $a\subseteq m(a)$ holds for every finite subset $a$ of $X$. 
     
     \item We let $(\diag)$ denote the statement that for every sequence $\seq{f_\alpha}{\alpha<\omega_1}$ of functions from $\omega_1$ to $\omega_1$, every finite subset $F$ of $\omega_1$ and every monotone function $\map{m}{[\omega_1]^{{<}\omega}}{[\omega_1]^{{<}\omega}}$, there exists a function $\map{g}{\omega_1}{\omega_1}$ such that $F\cap\ran{g}=\emptyset$ and for every $a\in[\omega_1]^{{<}\omega}$, there exists $a\subseteq b\in[\omega_1]^{{<}\omega}$ with the property that  $$\Set{\beta<\omega_1}{f_\alpha(\beta)=g(\beta)} ~ \subseteq ~ m(b)$$ holds for all $\alpha\in m(b)$.
 \end{enumerate}
\end{definition}

 A short argument shows that $(\diag)$  is not provable in $\ZFC$:

\begin{proposition} \label{noch}
 If $(\diag)$ holds, then $2^{\aleph_0}>\aleph_1$. 
\end{proposition}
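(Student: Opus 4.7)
The plan is to prove the contrapositive: assume $\CH$ and construct a sequence of functions violating the conclusion of $(\diag)$. The starting observation is the cardinal arithmetic identity $\aleph_1^{\aleph_0}=(2^{\aleph_0})^{\aleph_0}=2^{\aleph_0}$, which under $\CH$ forces the collection of all functions from $\omega$ to $\omega_1$ to have size exactly $\aleph_1$. I would enumerate these functions as $\seq{h_\alpha}{\alpha<\omega_1}$ and extend each to a function $\map{f_\alpha}{\omega_1}{\omega_1}$ by setting $f_\alpha(\beta)=h_\alpha(\beta)$ for $\beta<\omega$ and $f_\alpha(\beta)=0$ for $\beta\geq\omega$.

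Next, I would apply $(\diag)$ to the sequence $\seq{f_\alpha}{\alpha<\omega_1}$ with $F=\emptyset$ and with the identity monotone map $m(a)=a$ in order to obtain some $\map{g}{\omega_1}{\omega_1}$ satisfying the conclusion. Since $g\restriction\omega$ is itself a function from $\omega$ to $\omega_1$, it appears in the enumeration, say $g\restriction\omega=h_{\alpha^*}$ for some $\alpha^*<\omega_1$. By construction, $f_{\alpha^*}(\beta)=g(\beta)$ for every $\beta<\omega$, so the agreement set $\Set{\beta<\omega_1}{f_{\alpha^*}(\beta)=g(\beta)}$ contains $\omega$ and is therefore infinite.

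To close the argument, I would instantiate the conclusion of $(\diag)$ at the finite set $a=\{\alpha^*\}$: this produces some $\{\alpha^*\}\subseteq b\in[\omega_1]^{{<}\omega}$ with $\Set{\beta<\omega_1}{f_\alpha(\beta)=g(\beta)}\subseteq m(b)=b$ for all $\alpha\in b$; specialising to $\alpha=\alpha^*\in b$ forces the infinite agreement set above to sit inside the finite set $b$, which is impossible. The only subtle step is the choice of the family $\seq{f_\alpha}{\alpha<\omega_1}$, where $\CH$ is used precisely so that every candidate $g$ is caught on $\omega$ by some $f_{\alpha^*}$; the trivial choices $F=\emptyset$ and $m=\id$ then suffice, because the conclusion of $(\diag)$ already requires every agreement set to be finite. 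I do not expect any serious obstacle beyond setting up this enumeration correctly.
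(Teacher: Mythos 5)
Your proof is correct and follows essentially the same route as the paper: under $\CH$, enumerate all functions on $\omega$ so that any candidate $g$ agrees with some $f_{\alpha^*}$ on all of $\omega$, and then the instance of $(\diag)$ at $a=\{\alpha^*\}$ with $m=\id$ forces that agreement set to be finite, a contradiction. If anything, your version is slightly more careful than the paper's, which enumerates functions into $\omega$ rather than into $\omega_1$; since $g$ maps into $\omega_1$, your use of $\aleph_1^{\aleph_0}=\aleph_1$ to enumerate all functions from $\omega$ to $\omega_1$ is exactly what is needed.
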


\begin{proof}
 If $2^{\aleph_0}=\aleph_1$ holds, then there exists a sequence $\seq{\map{f_\alpha}{\omega_1}{\omega}}{\alpha<\omega_1}$ of functions with the property that the set $\Set{f_\alpha\restriction\omega}{\alpha<\omega_1}$ contains all functions from $\omega$ to $\omega$. In particular, for every function $\map{g}{\omega_1}{\omega_1}$, there exists $\alpha<\omega_1$ with $g(n)=f_\alpha(n)$ for all $n<\omega$. 
\end{proof}

In addition, it is possible to use results of Baumgartner in \cite{MR401472} to show that $(\diag)$ is also not a theorem of $\ZFC+\neg\CH$:

\begin{proposition}
 If $\CH$ holds and $G$ is $\Add{\omega}{\omega_2}$-generic over $\VV$, then $(\diag)$ fails in $\VV[G]$. 
\end{proposition}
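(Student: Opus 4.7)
The plan is to construct, inside $\VV[G]$, an $\omega_1$-sequence of functions $\seq{f_\gamma}{\gamma<\omega_1}$ from $\omega_1$ to $\omega_1$, a finite set $F \subseteq \omega_1$, and a monotone map $m$ jointly witnessing the failure of $(\diag)$. It suffices to arrange that, for every $g:\omega_1\to\omega_1$ in $\VV[G]$ with $F\cap\ran{g}=\emptyset$, there is some $\gamma<\omega_1$ for which the agreement set $A_\gamma^g := \Set{\xi<\omega_1}{f_\gamma(\xi)=g(\xi)}$ is infinite. Indeed, with $a=\{\gamma\}$ the monotonicity of $m$ gives $\gamma\in a\subseteq m(a)\subseteq m(b)$ for every finite $b\supseteq a$, while the infinite set $A_\gamma^g$ cannot fit into the finite set $m(b)$, refuting the closure conclusion of $(\diag)$ for $g$.

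First work in $\VV$, where $\CH$ supplies an enumeration $\seq{Y_\beta}{\beta<\omega_1}$ of all countably infinite subsets of $\omega_1\setminus\{0\}$ together with bijections $\map{\phi_\beta}{\omega}{Y_\beta}$ and a bijection $\map{\pi}{\omega_1}{\omega_1\times\omega_1}$, written $\pi(\gamma)=(\alpha(\gamma),\beta(\gamma))$. In $\VV[G]$, extract from the first $\omega_1$ Cohen coordinates of $G$ a sequence $\seq{c_\alpha}{\alpha<\omega_1}$ of mutually generic Cohen functions $c_\alpha:\omega\to\omega$ (using the standard identification of $\Add{\omega}{1}$ with $\omega^{{<}\omega}$). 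Define $f_\gamma: \omega_1\to\omega_1$ by setting $f_\gamma(n) = \phi_{\beta(\gamma)}(c_{\alpha(\gamma)}(n))$ for $n<\omega$ and $f_\gamma(\xi)=0$ for $\xi\geq\omega$, and take $F=\{0\}$ together with $m(b)=b$.

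To verify the required property, fix any $g$ with $0\notin\ran{g}$. Then $f_\gamma(\xi)=0\neq g(\xi)$ for $\xi\geq\omega$, which already confines $A_\gamma^g$ to $\omega$. By the c.c.c.-ness of $\Add{\omega}{\omega_2}$, the set $\ran{g\restriction\omega}$ is covered by some $\VV$-countable subset of $\omega_1\setminus\{0\}$; padding to a countably infinite set gives $\ran{g\restriction\omega}\subseteq Y_{\beta^*}$ for some $\beta^* < \omega_1$. Still by c.c.c., there is a countable $S\subseteq\omega_2$ with $g\in\VV[G\restriction S]$; pick any $\alpha^*<\omega_1$ whose Cohen coordinate is disjoint from $S$ and set $\gamma=\pi^{-1}(\alpha^*,\beta^*)$. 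The product-forcing analysis then shows that $c_{\alpha^*}$ is Cohen-generic over $\VV[G\restriction S]$, and hence over $\VV[g]$. The function $r := \phi_{\beta^*}^{-1}\circ(g\restriction\omega): \omega\to\omega$ lives in $\VV[g]$, and for each $n_0,k<\omega$ the family of $s\in\omega^{{<}\omega}$ agreeing with $r$ at $\geq k$ positions beyond $n_0$ is dense in $\omega^{{<}\omega}$ (simply extend $s$ by $k$ values copied from $r$). The Cohen-genericity of $c_{\alpha^*}$ over $\VV[g]$ therefore yields $c_{\alpha^*}(n)=r(n)$ for infinitely many $n$; applying $\phi_{\beta^*}$ to both sides converts this into $f_\gamma(n)=g(n)$ for infinitely many $n$, so $A_\gamma^g$ is infinite, as required.

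The main obstacle, and the point where \cite{MR401472} enters, is the mutual-genericity analysis of product Cohen forcing that legitimately treats $c_{\alpha^*}$ as Cohen-generic over the intermediate model $\VV[g]$ already containing $g$. This is precisely the type of technology Baumgartner develops in \cite{MR401472} in his work on preserving MAD families under Cohen forcing, and the reference to \cite{MR401472} in the statement of the proposition is to that body of results.
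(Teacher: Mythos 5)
Your construction is essentially correct, but it takes a genuinely different route from the paper. The paper argues indirectly: assuming $(\diag)$ holds in $\VV[G]$, it applies the principle recursively $\omega_2$ times to produce functions $\seq{\map{f_\gamma}{\omega_1}{\omega_1}}{\gamma<\omega_2}$ with pairwise finite agreement, converts their graphs (via a bijection of $\omega_1\times\omega_1$ with $\omega_1$) into $\aleph_2$ many unbounded subsets of $\omega_1$ with pairwise finite intersections, and then quotes Baumgartner's theorem from Section 6 of \cite{MR401472} that no such family exists after adding $\aleph_2$ Cohen reals to a model of $\CH$. You instead exhibit an explicit failing instance of $(\diag)$: a $\CH$-indexed catalogue of the countably infinite subsets of $\omega_1\setminus\{0\}$ from $\VV$, the first $\omega_1$ Cohen reals, $F=\{0\}$ and $m=\id$, together with a product-genericity/density argument showing that every admissible $g$ agrees infinitely often with some $f_\gamma$; your reduction of the failure of $(\diag)$ to this statement (via $a=\{\gamma\}$ and monotonicity of $m$) is correct. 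Your route is self-contained, needing only the ccc covering lemma and the product lemma for Cohen forcing rather than Baumgartner's theorem, whereas the paper's route is shorter on the page because the combinatorial core is outsourced to the cited result. Your closing paragraph, however, misattributes the role of \cite{MR401472}: mutual genericity for product Cohen forcing is standard and is not what the paper cites Baumgartner for; the citation is for the nonexistence of an almost disjoint family of $\aleph_2$ uncountable subsets of $\omega_1$ in this extension, and your own argument does not need it at all.

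One step as written is false and needs a (minor) repair: since $\Add{\omega}{\omega_2}$ is merely ccc, a function $\map{g}{\omega_1}{\omega_1}$ in $\VV[G]$ need not belong to $\VV[G\restriction S]$ for any countable $S\subseteq\omega_2$ (a nice name for $g$ may involve $\aleph_1$ many coordinates). What you actually use is only that the real $r=\phi_{\beta^*}^{-1}\circ(g\restriction\omega)$ lies in such an intermediate model, and this is true: $g\restriction\omega$ is (coded by) a real, so there is a countable $S$ with $g\restriction\omega\in\VV[G\restriction S]$, and then any $c_{\alpha^*}$ whose coordinate avoids $S$ is Cohen-generic over $\VV[G\restriction S]$, over which the dense sets of your density argument are defined. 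Replacing ``generic over $\VV[g]$'' by ``generic over $\VV[G\restriction S]$, a model containing $r$'' makes the argument go through verbatim.
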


\begin{proof}
 Work in $\VV[G]$ and assume, towards a contradiction, that $(\diag)$ holds. 
 We can then use this assumption to construct a sequence $\seq{\map{f_\gamma}{\omega_1}{\omega_1}}{\gamma<\omega_2}$ of functions with the property that for all $\delta<\gamma<\omega_2$, the set $\Set{\alpha<\omega_1}{f_\gamma(\alpha)=f_\delta(\alpha)}$ is finite. 
 By considering the graphs of these functions und using a bijection between $\omega_1\times\omega_1$ and $\omega_1$, we can now construct a sequence $\seq{A_\gamma}{\gamma<\omega_2}$ of unbounded subsets of $\omega_1$ with the property that for all $\delta<\gamma<\omega_2$, the set $A_\gamma\cap A_\delta$ is finite. 
 But this contradicts results in {\cite[Section 6]{MR401472}} that show that no  sequence of subsets with these properties exists in $\VV[G]$.  
\end{proof}

 In the remainder of this section, we  use results of Larson to prove that the principle $(\diag)$ is a consequence of forcing axioms. 
 These arguments rely on the following forcing notion defined in {\cite[Section 6]{MR2298476}}:

\begin{definition}
  We let $\DDD$ denote the partial order defined by the following clauses: 
\begin{enumerate}
 \item A condition in $\DDD$ is a triple $p=\langle a_p,\mathscr{F}_p,\mathscr{X}_p\rangle$ such that the following statements hold: 
 \begin{enumerate}
  \item $a_p$ is a function from a finite subset $d_p$ of $\omega_1$ into $\omega_1$. 
  
  \item $\mathscr{F}_p$ is a finite set of functions from $\omega_1$ to $\omega_1$. 
  
  \item $\mathscr{X}_p$ is a finite $\in$-chain of countable elementary submodels of $\HH{\omega_2}$. 
  
  \item If $X\in\mathscr{X}_p$ and $\alpha\in d_p\cap X$, than $a_p(\alpha)\in X$. 
  
  \item If $X\in\mathscr{X}_p$, $\alpha\in d_p\setminus X$ and $f\in X$ is a function from $\omega_1$ to $\omega_1$, then $a_p(\alpha)\neq f(\alpha)$. 
 \end{enumerate}
 
 \item Given conditions $p$ and $q$ in $\DDD$, we have $p\leq_\DDD q$ if and only if the following statements hold: 
  \begin{enumerate}
   \item $d_q\subseteq d_p$, $a_q=a_p\restriction d_q$, $\mathscr{F}_q\subseteq\mathscr{F}_p$ and $\mathscr{X}_q\subseteq\mathscr{X}_p$. 
   
   \item If $\alpha\in d_p\setminus d_q$ and $f\in\mathscr{F}_q$, then $a_p(\alpha)\neq f(\alpha)$. 
  \end{enumerate}
\end{enumerate}
\end{definition}

 Given $\alpha<\omega_1$, we define $D_\alpha$ to be the set of all conditions $p$ in $\DDD$ with $\alpha\in d_p$.

 \begin{proposition}\label{proposition:TotalFunction}
  If $q$ is a condition in $\DDD$ and $\alpha<\omega_1$, then there is a condition $p$ in $D_\alpha$ with  $p\leq_\DDD q$, $\mathscr{F}_p=\mathscr{F}_q$ and $\mathscr{X}_p=\mathscr{X}_q$.
 \end{proposition}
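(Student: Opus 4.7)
The plan is as follows. If $\alpha \in d_q$, set $p := q$; otherwise, I will set $p := \langle a_q \cup \{(\alpha,\beta)\}, \mathscr{F}_q, \mathscr{X}_q \rangle$ for a suitably chosen $\beta < \omega_1$. With this definition, the conditions on $\mathscr{F}_p$ and $\mathscr{X}_p$ are immediate, and clauses (a)--(c) of the definition of $\DDD$ follow from the corresponding clauses for $q$. The entire proof thus reduces to finding $\beta$ satisfying three requirements: (I) $\beta \in X$ for every $X \in \mathscr{X}_q$ with $\alpha \in X$; (II) $\beta \neq f(\alpha)$ for every $f \in \mathscr{F}_q$; and (III) $\beta \neq f(\alpha)$ for every $X \in \mathscr{X}_q$ with $\alpha \notin X$ and every function $f : \omega_1 \to \omega_1$ in $X$.

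I would enumerate $\mathscr{X}_q$ as $X_0 \in X_1 \in \cdots \in X_{n-1}$ and let $k$ be the least index with $\alpha \in X_k$, setting $k := n$ if no such index exists. Setting $B_i := \{f(\alpha) : f \in X_i, \, f : \omega_1 \to \omega_1\}$ for $i < k$, $B := \bigcup_{i<k} B_i$, and $C := \{f(\alpha) : f \in \mathscr{F}_q\}$, each $B_i$ is countable (since $X_i$ is) and $C$ is finite. If $k = n$, then (I) is vacuous and $B \cup C$ is a countable subset of $\omega_1$, so any $\beta$ in its complement works. The substantive case is $k < n$, where (I) forces $\beta \in X_k$.

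In this case, the plan is to use elementarity of $X_k \prec \HH{\omega_2}$. Since $X_i \in X_k$ for $i < k$ and $\alpha \in X_k$, the set $B_i$ is definable from these parameters over $\HH{\omega_2}$ (and is an element of $\HH{\omega_2}$, being a countable subset of $\omega_1$), hence $B_i \in X_k$ by elementarity, and so $B \in X_k$. The $\HH{\omega_2}$-true statement that $\omega_1 \setminus B$ is cofinal in $\omega_1$ then reflects to $X_k$, so $X_k \cap (\omega_1 \setminus B)$ is cofinal in the countable ordinal $X_k \cap \omega_1$ and hence infinite; removing the finite set $C$ still leaves ordinals, from which I would select $\beta$.

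Verification that $p$ is a condition and $p \leq_{\DDD} q$ is then a routine case split on $j$: for $j \geq k$, $X_k \subseteq X_j$ gives $\beta \in X_j$, establishing clause (d); for $j < k$, $\alpha \notin X_j$ and $\beta \notin B_j$ give clause (e); and $\beta \notin C$ secures the extension clause of $\leq_{\DDD}$. The main obstacle I expect is the elementarity step in the case $k < n$, specifically verifying that each $B_i$ does indeed belong to $X_k$ so that the countable bad set $B$ does not exhaust $X_k \cap \omega_1$.
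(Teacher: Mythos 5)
Your proof is correct and follows essentially the same route as the paper's: add the pair $\langle\alpha,\beta\rangle$ with $\beta$ chosen inside the $\in$-minimal model of the chain containing $\alpha$ (when one exists), avoiding the values $f(\alpha)$ for $f\in\mathscr{F}_q$ and for the countably many functions lying in the models below. The only difference is cosmetic: the paper takes $\beta$ above all such values while you avoid the countable bad set $B$, and you spell out the elementarity step ($B\in X_k$ and reflection of cofinality) that the paper leaves implicit.
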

 
 \begin{proof}
 First, assume that $\alpha\in X$ for some $X\in\mathscr{X}_q$. Let $Y\in\mathscr{X}_q$ be $\in$-minimal with this property. Then there exists $\beta\in Y\cap\omega_1$ with  $f(\alpha)\neq\beta$ for all $f\in\mathscr{F}_q$ and $g(\alpha)<\beta$ for all $X\in\mathscr{X}_q\cap Y$ and every function $\map{g}{\omega_1}{\omega_1}$ in $X$. Define $$p ~ = ~ \langle a_q\cup\{\langle \alpha,\beta\rangle\},\mathscr{F}_q,\mathscr{X}_q\rangle.$$ Then $p$ is a condition in $\DDD$ that is an element of $D_\alpha$. Moreover, this construction ensures that $p\leq_\DDD q$ holds. 
 
 Now, assume that $\alpha\notin X$ for all $X\in\mathscr{X}_q$. Pick $\beta<\omega_1$ with $f(\alpha)\neq\beta$ for all $f\in\mathscr{F}_q$ and $g(\alpha)<\beta$  for all $X\in\mathscr{X}_q$ and every function $\map{g}{\omega_1}{\omega_1}$ in $X$. If we define $p$ as above, then we again obtain a condition in $D_\alpha$ below $q$. 
 \end{proof}

 Next, for every function $\map{f}{\omega_1}{\omega_1}$, we let $D_f$ denote the set of all conditions $p$ in $\DDD$ with $f\in\mathscr{F}_p$.

\begin{proposition}\label{proposition:AddFunctions}
 If $q$ is a condition in $\DDD$ and $\map{f}{\omega_1}{\omega_1}$ is a function, then $\langle a_q,\mathscr{F}_q\cup\{f\},\mathscr{X}_q\rangle$ is a condition in $\DDD$ below $q$ that is an element of $D_f$.  \qed 
\end{proposition}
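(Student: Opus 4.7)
The statement is essentially a routine verification directly from the definitions, so the plan is simply to check each clause. Since the new triple only differs from $q$ in that $f$ has been added to the finite set $\mathscr{F}_q$, while $a_q$ and $\mathscr{X}_q$ are left untouched, most conditions will hold trivially because they were inherited from $q$.

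First, I would verify that $p := \langle a_q,\mathscr{F}_q\cup\{f\},\mathscr{X}_q\rangle$ is a condition in $\DDD$. Clauses (a), (c), (d), (e) in the definition of a condition involve only $a_p = a_q$ and $\mathscr{X}_p = \mathscr{X}_q$, so they hold because they held for $q$. The only clause that is affected is (b), which now requires that $\mathscr{F}_q\cup\{f\}$ be a finite set of functions from $\omega_1$ to $\omega_1$; this holds because $\mathscr{F}_q$ is finite by assumption and $f$ is such a function. Crucially, clause (e) is not threatened by adding $f$, as it restricts values $a_p(\alpha)$ only for functions lying in some $X\in\mathscr{X}_p$, and not for the newly added $f$ itself.

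Next, I would verify $p\leq_\DDD q$. The inclusions $d_q\subseteq d_p$, $a_q=a_p\restriction d_q$, $\mathscr{F}_q\subseteq\mathscr{F}_p$ and $\mathscr{X}_q\subseteq\mathscr{X}_p$ are immediate since the domain, assignment, and chain of models are unchanged and the set of functions has only grown. The second clause of $\leq_\DDD$, requiring $a_p(\alpha)\neq g(\alpha)$ for $\alpha\in d_p\setminus d_q$ and $g\in\mathscr{F}_q$, is vacuous because $d_p\setminus d_q = \emptyset$.

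Finally, membership in $D_f$ is immediate from $f\in\mathscr{F}_q\cup\{f\}=\mathscr{F}_p$. There is no real obstacle in this proposition; the content of the statement is that enlarging $\mathscr{F}_q$ while keeping the domain of $a_q$ fixed never violates any of the constraints, and the proof is a straightforward unwinding of definitions.
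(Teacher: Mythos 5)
Your verification is correct and is exactly the routine unwinding of the definitions that the paper has in mind, which is why it states the proposition with no written proof. Nothing is missing: clause (b) is the only one affected by adding $f$, and the second clause of $\leq_\DDD$ is indeed vacuous since $d_p\setminus d_q=\emptyset$.
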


\begin{lemma}[{\cite[Theorem 6.2]{MR2298476}}]
 The partial order $\DDD$ is proper. 
\end{lemma}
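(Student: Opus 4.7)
The plan is to adapt the standard ``models as side conditions'' properness argument to $\DDD$. Fix a sufficiently large regular cardinal $\theta$, a countable elementary submodel $M \prec H(\omega_2^+\cdot\theta)$ with $\DDD \in M$, and a condition $q \in \DDD \cap M$; the goal is to produce an $(M,\DDD)$-generic condition extending $q$. Setting $X := M \cap H(\omega_2)$, the first move is to form
\[ p ~ := ~ \langle a_q,\,\mathscr{F}_q,\,\mathscr{X}_q \cup \{X\}\rangle. \]
Since $q \in M$ is hereditarily countable, $\mathscr{X}_q \subseteq X$ and $\mathscr{X}_q \cup \{X\}$ is still a finite $\in$-chain of countable elementary submodels of $H(\omega_2)$; the inclusion $d_q \subseteq X$ makes clause (e) at the new model $X$ vacuous, while clause (d) at $X$ reduces to $a_q \in X$. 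Hence $p \in \DDD$ with $p \leq_\DDD q$.

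To verify that $p$ is $(M,\DDD)$-generic, I would fix a dense open set $D \in M$ together with an extension $r \leq_\DDD p$, and produce some $s \in D \cap M$ compatible with $r$. The standard device is to project $r$ into $M$ via
\[ r \upharpoonright X ~ := ~ \langle a_r \restriction (d_r \cap X),\,\mathscr{F}_r \cap X,\,\mathscr{X}_r \cap X\rangle. \]
Each component of $r\upharpoonright X$ is a finite set consisting of elements of $X$, so $r\upharpoonright X$ itself lies in $X \subseteq M$, and a routine inspection (using $X \in \mathscr{X}_r$ to confirm clause (e)) shows $r\upharpoonright X \in \DDD$. By density of $D$ and elementarity of $M$ I can then choose $s \in D \cap M$ with $s \leq_\DDD r \upharpoonright X$.

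The remaining and central step is to exhibit a common extension
\[ t ~ := ~ \langle a_r \cup a_s,\,\mathscr{F}_r \cup \mathscr{F}_s,\,\mathscr{X}_r \cup \mathscr{X}_s\rangle \]
of $r$ and $s$; this case analysis is the main obstacle. Because $s \in M$, the sets $d_s$, $\mathscr{F}_s$ and $\mathscr{X}_s$ are all subsets of $X$, so together with $X \in \mathscr{X}_r$ the chain $\mathscr{X}_t$ remains an $\in$-chain, and the agreement of $a_r$ and $a_s$ on the common domain $d_r \cap d_s \subseteq d_r \cap X$ follows from $s \leq_\DDD r \upharpoonright X$. The technical heart is verifying clauses (d) and (e) of the condition definition and clause (b) of the order relation for $t$: the pivotal role of the model $X \in \mathscr{X}_r$ is that, for any $\alpha \in d_r \setminus d_s$ (necessarily outside $X$) and any function $f$ supplied by $s$ or by some $Y \in \mathscr{X}_s$ (necessarily in $X$), clause (e) for $r$ with separating model $X$ delivers $a_r(\alpha) \neq f(\alpha)$; the symmetric direction, handling new domain points of $s$ against data of $r$ lying in $X$, is covered by what $s \leq_\DDD r\upharpoonright X$ already provides, while any data of $r$ lying $\in$-above $X$ is shielded because in that case $d_s \subseteq X$ is contained in the separating side-condition model. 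The detailed analysis parallels the proof of {\cite[Theorem 6.2]{MR2298476}}.
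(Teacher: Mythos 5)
There is a genuine gap, and it sits exactly at the step you flag as ``the symmetric direction.'' Your amalgam $t=\langle a_r\cup a_s,\mathscr{F}_r\cup\mathscr{F}_s,\mathscr{X}_r\cup\mathscr{X}_s\rangle$ must satisfy clause (b) of the ordering below $r$: for every $\alpha\in d_s\setminus d_r$ and every $f\in\mathscr{F}_r$ you need $a_s(\alpha)\neq f(\alpha)$. For $f\in\mathscr{F}_r\cap X$ this is indeed provided by $s\leq_\DDD r\upharpoonright X$, but for $f\in\mathscr{F}_r\setminus X$ nothing in the definition of $\DDD$ helps: clause (e) of a condition only protects points \emph{outside} a model of the side-condition chain against functions \emph{inside} that model, never the reverse, and clause (d) only forces $a_s(\alpha)\in X$, which does not prevent $a_s(\alpha)=f(\alpha)$ since $f(\alpha)$ may well lie in $X\cap\omega_1$. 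Your phrase ``shielded because $d_s\subseteq X$ is contained in the separating side-condition model'' is not an argument; since $\mathscr{F}_r$ was added below $p$ and is invisible to $M$, a condition $s\in D\cap M$ chosen merely below $r\upharpoonright X$ can perfectly well agree with some $f\in\mathscr{F}_r\setminus X$ at a new point of its domain, and then $t\not\leq_\DDD r$ (this is the whole point of the forcing: it is designed to diagonalize against ground-model functions, so these accidental agreements are exactly what must be ruled out, not assumed away).

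The missing ingredient is Larson's preliminary lemma ({\cite[Lemma 6.1]{MR2298476}}, reproduced in this paper for the matrix forcing $\EEE$ as Lemma \ref{lemma:LarsonUltraSort}): using a uniform ultrafilter on $\omega_1$ one shows that for a dense set $D$ below the projected condition there is an ordinal $\lambda<\omega_1$, definable from parameters in $M$, such that for \emph{every} finite set $\mathscr{F}$ of functions from $\omega_1$ to $\omega_1$ there is a member of $D$ below the projection whose working part is contained in $\lambda\times\lambda$ and avoids every $f\in\mathscr{F}$ on its new domain points. One applies this inside $M$ to get $\lambda\in M\cap\omega_1$, then outside $M$ with $\mathscr{F}=\mathscr{F}_r$ to get a witness whose finite working part $a$ lies in $\lambda\times\lambda\subseteq M$; since $a\in M$, elementarity yields $s\in D\cap M$ with \emph{the same} working part $a_s=a$, and only then does your amalgamation go through, because the inequalities $a_s(\alpha)\neq f(\alpha)$ for the external $f\in\mathscr{F}_r$ are inherited verbatim from the outside witness. (Compare the corresponding step in the proof of Lemma \ref{lemma:GenericCondition}, where $s$ is obtained from $r$ with $a_s=a_r\subseteq\lambda\times\lambda$.) The rest of your outline -- adding $X=M\cap\HH{\omega_2}$ as a side condition, projecting $r$ to $r\upharpoonright X$, and the verification that the amalgam is a condition -- is in line with Larson's argument, but without this reflection-of-the-working-part device the central density claim fails.
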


We are now ready to show that $(\diag)$ is a consequence of $\BPFA$.  Our proof relies on the following  classical result of Bagaria  that characterizes the validity of $\BPFA$ in terms of generic absoluteness.

\begin{theorem}[{\cite[Theorem 5]{MR1773776}}]
 The following statements are equivalent: 
 \begin{enumerate}
  \item $\BPFA$ holds. 
  
  \item If $\varphi(v)$ is a $\Sigma_1$-formula,\footnote{See {\cite[p. 5]{MR1994835}} for the definition of the \emph{Levy hierarchy} of formulas. Note that, using a \emph{universal} $\Sigma_1$-formula, it is possible to phrase this statement as a single sentence in the language of set theory.} $z$ is an  element of $\HH{\omega_2}$, $\PPP$ is a proper forcing  and $p$ is a condition  in $\PPP$ with   $p\Vdash_\PPP\varphi(\check{z})$, then $\varphi(z)$ holds. 
 \end{enumerate}
\end{theorem}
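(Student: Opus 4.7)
The plan is to prove both directions of the equivalence separately, relying on the well-known fact that, for every uncountable regular cardinal $\kappa$, the structure $\HH{\kappa}$ is $\Sigma_1$-elementary in $\VV$: given a $\Sigma_1$-formula $\exists x\,\psi(x,z)$ with $z\in\HH{\kappa}$ and $\psi$ a $\Delta_0$-formula, a L\"owenheim--Skolem argument yields an elementary submodel $M\prec\VV_\lambda$ of size less than $\kappa$ that contains $\tc{\{z\}}$ and a witness for the formula, whose Mostowski collapse lies inside $\HH{\kappa}$ and fixes $z$. Consequently, statement (2) is equivalent to the assertion that, for every proper forcing $\PPP$ and every $\PPP$-generic filter $G$ over $\VV$, one has $\HH{\omega_2}^\VV\prec_1\HH{\omega_2}^{\VV[G]}$; the inclusion $\HH{\omega_2}^\VV\subseteq\HH{\omega_2}^{\VV[G]}$ is automatic because proper forcings preserve $\omega_1$.

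For the direction (1) $\Rightarrow$ (2), assume $\BPFA$ and suppose $\PPP$ is proper, $z\in\HH{\omega_2}$, and $p\Vdash_\PPP\varphi(\check z)$ for a $\Sigma_1$-formula $\varphi(v)\equiv\exists x\,\psi(x,v)$ with $\psi$ a $\Delta_0$-formula. I would consider the two-step iteration $\PPP * \dot\QQQ$, where $\dot\QQQ$ is a $\PPP$-name for the countably closed collapse $\Col{\omega_1}{|\PPP|}$; this iteration remains proper. In a generic extension by $\PPP * \dot\QQQ$ below $p$, any witness $x$ for $\varphi(z)$ together with $z$ can be absorbed into a transitive set $M$ of size $\aleph_1$ that satisfies enough of $\ZFC$ and contains $x$ and $z$; the set $M$ is then coded by an extensional well-founded relation $E\subseteq\omega_1\times\omega_1$ equipped with a rank function $\map{\rho}{\omega_1}{\omega_1}$ and a distinguished ordinal $\alpha_z<\omega_1$ whose Mostowski image equals $z$. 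I would then express the existence of such a triple $(E,\rho,\alpha_z)$ as the intersection of $\aleph_1$-many dense subsets of $\PPP * \dot\QQQ$, each of size at most $\aleph_1$: dense sets ensuring that $E$ and $\rho$ are defined at each ordinal, that $E$ is extensional between each pair of ordinals, that $\rho$ is compatible with $E$ in the sense that $\alpha\,E\,\beta$ forces $\rho(\alpha)<\rho(\beta)$, that $\alpha_z$ collapses to $z$, and that some ordinal codes a $\psi$-witness for $z$. An application of $\BPFA$ to this family produces a filter meeting all these dense sets, from which one decodes a relation $E\in\VV$ whose Mostowski collapse is a transitive set containing $z$ together with a $\psi$-witness $x$, thereby verifying $\varphi(z)$ in $\VV$.

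For the direction (2) $\Rightarrow$ (1), assume the generic absoluteness statement and let $\PPP$ be a proper forcing together with a sequence $\seq{D_\alpha}{\alpha<\omega_1}$ of maximal antichains in $\PPP$, each of size at most $\aleph_1$. I would choose $z\in\HH{\omega_2}$ encoding both the sequence $\seq{D_\alpha}{\alpha<\omega_1}$ and the restriction of the compatibility relation of $\PPP$ to $\bigcup_{\alpha<\omega_1}D_\alpha$; this is possible because this union has cardinality at most $\aleph_1$. The assertion ``\emph{there exists a set $F$ contained in $\bigcup_{\alpha<\omega_1}D_\alpha$ that is pairwise compatible with respect to the relation coded by $z$ and meets every antichain coded by $z$}'' is $\Sigma_1$ in the parameter $z$. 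In the $\PPP$-generic extension $\VV[G]$, the set $F=G\cap\bigcup_{\alpha<\omega_1}D_\alpha$ witnesses this $\Sigma_1$-assertion, so $\mathbf{1}_\PPP$ forces it to hold. By (2), the assertion already holds in $\VV$, and the resulting $F$ extends in the completion of $\PPP$ to a filter meeting every $D_\alpha$, which is the required instance of $\BPFA$.

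The principal obstacle lies in the direction (1) $\Rightarrow$ (2): one has to verify that the requirements on the coded data $(E,\rho,\alpha_z)$ genuinely split into $\aleph_1$-many dense subsets of $\PPP * \dot\QQQ$ of size at most $\aleph_1$, and that the filter produced by $\BPFA$ decodes to a relation in $\VV$ whose Mostowski collapse really does contain both $z$ at position $\alpha_z$ and a $\psi$-witness $x$. The countable closure of $\dot\QQQ$ is essential, since it permits each finite obstruction to the coding to be addressed by a single dense set of size at most $\aleph_1$; the presence of the rank function $\rho$ enforces well-foundedness of $E$ automatically, and the $\Delta_0$-absoluteness of $\psi$ ensures that any witness decoded from the ground-model relation continues to satisfy $\psi$ in $\VV$.
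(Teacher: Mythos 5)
This statement is quoted from Bagaria's paper and is not proved in the present article, so I am comparing your attempt against the standard published argument, whose overall outline (reduce (2) to $\HH{\omega_2}^\VV\prec_{\Sigma_1}\HH{\omega_2}^{\VV[G]}$, code witnesses by relations on $\omega_1$, apply $\BPFA$ to $\aleph_1$-many small antichains) you have correctly reproduced. The gap lies in how you certify well-foundedness in the direction (1)~$\Rightarrow$~(2). An order-preserving map $\map{\rho}{\omega_1}{\omega_1}$ with $\alpha\mathrel{E}\beta\Rightarrow\rho(\alpha)<\rho(\beta)$ exists only when $E$ is well-founded of rank at most $\omega_1$; but the transitive set $M$ you need must contain $z$ and the witness $x$, and a typical element of $\HH{\omega_2}\setminus\HH{\omega_1}$ --- for instance any ordinal in $[\omega_1,\omega_2)$ --- has von Neumann rank at least $\omega_1$, so $M$ has ordinal height greater than $\omega_1$ and admits no such $\rho$. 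Replacing $\rho$ by a rank function into the ordinals does not repair this: after composing with $\Col{\omega_1}{|\PPP|}$, the set of possible values of $\rho(\alpha)$ may have size greater than $\aleph_1$ in $\VV$, so the antichains deciding it are too large for $\BPFA$. The missing device is the one Bagaria actually uses: add names for a continuous increasing chain $\seq{X_\xi}{\xi<\omega_1}$ of countable subsets of $\omega_1$ with union $\omega_1$, decide each countable restriction $E\restriction X_\xi$ by antichains of size at most $\aleph_1$, and invoke Mostowski ($\Pi^1_1$-) absoluteness to conclude that each decoded countable piece is genuinely well-founded in $\VV$; since any descending $\omega$-sequence through the decoded relation is contained in some $X_\xi$, the full relation is well-founded.

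There is also a secondary gap in (2)~$\Rightarrow$~(1): a pairwise compatible subset of $\bigcup_{\alpha<\omega_1}D_\alpha$ meeting every $D_\alpha$ need not generate a filter, because pairwise compatibility does not imply centeredness (three pairwise compatible elements of a Boolean algebra can have meet $0$), and you cannot strengthen the assertion to centeredness without quantifying over all of $\PPP$, which need not belong to $\HH{\omega_2}$. The standard repair is to fix an elementary submodel $M$ of some $\HH{\theta}$ of size $\aleph_1$ containing the antichains, work in the suborder $Q=M\cap\mathrm{r.o.}(\PPP)$, which is closed under Boolean meets of compatible pairs, and take as the $\Sigma_1$-assertion the existence of a subset of $Q$ that is downward directed \emph{inside} $Q$ and meets every $D_\alpha$; the set $G\cap Q$ witnesses this in the extension, and the object obtained in $\VV$ then generates an honest filter on $\mathrm{r.o.}(\PPP)$.
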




\begin{theorem}\label{main1}
 $\BPFA$ implies that $(\diag)$ holds.
\end{theorem}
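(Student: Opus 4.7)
The plan is to apply Bagaria's $\Sigma_1$-absoluteness characterization of $\BPFA$ to Larson's forcing $\DDD$. Fix the parameters $\seq{f_\alpha}{\alpha<\omega_1}$, $F$ and $m$, and encode them as $z\in\HH{\omega_2}$. Witnessing the $\forall a\,\exists b$ alternation in $(\diag)$ by a Skolem function $h\colon[\omega_1]^{{<}\omega}\to[\omega_1]^{{<}\omega}$ turns the conclusion into the statement
\[
\varphi(z) \,\equiv\, \exists g\,\exists h\,\bigl[F\cap\ran{g}=\emptyset \,\wedge\, \forall a\,\bigl(a\subseteq h(a) \,\wedge\, \forall\alpha\in m(h(a))\,\forall\beta\in\omega_1\,(f_\alpha(\beta)=g(\beta)\to\beta\in m(h(a)))\bigr)\bigr],
\]
which is $\Sigma_1$ with parameter $z$ since, beyond the two initial existentials, all quantifiers can be bounded by elements of $\HH{\omega_2}$. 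By Bagaria's theorem it suffices to find a proper forcing together with a condition in it forcing $\varphi(\check z)$; the forcing will be $\DDD$.

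As the distinguished condition take $p_0:=\langle\emptyset,\{c_\gamma\mid\gamma\in F\},\emptyset\rangle\in\DDD$, where $c_\gamma\colon\omega_1\to\omega_1$ is the constant function with value $\gamma$. Clause (ii)(b) of $\leq_\DDD$ immediately gives $\ran{a_q}\cap F=\emptyset$ for every $q\leq_\DDD p_0$, so in any $\DDD$-generic filter $G\ni p_0$ the generic function $g:=\bigcup_{q\in G}a_q$, which is total by Proposition~\ref{proposition:TotalFunction}, satisfies $F\cap\ran{g}=\emptyset$.

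For the construction of $h$ in $V[G]$, let $E_a$ denote, for each $a\in[\omega_1]^{{<}\omega}$, the set of conditions $q\leq_\DDD p_0$ admitting some finite $b$ with $a\subseteq b$, $d_q\subseteq m(b)$, and $\mathscr{F}_q\supseteq\{f_\alpha\mid\alpha\in m(b)\}$. I would verify that $E_a$ is dense below $p_0$ by the following key construction: given $r\leq_\DDD p_0$, set $b:=a\cup d_r$, so $d_r\subseteq b\subseteq m(b)$, and apply Proposition~\ref{proposition:AddFunctions} iteratively to install the functions $\{f_\alpha\mid\alpha\in m(b)\}$ into $\mathscr{F}$, obtaining some $q\leq_\DDD r$ with $d_q=d_r\subseteq m(b)$ witnessing $q\in E_a$. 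Genericity yields $q\in G\cap E_a$ together with a witnessing $b$, and AC in $V[G]$ allows setting $h(a):=b$. For the required property, fix $\alpha\in m(b)$ and $\beta\notin m(b)$. Since $d_q\subseteq m(b)$, we have $\beta\notin d_q$, so there is $q'\in G$ with $q'\leq_\DDD q$ and $\beta\in d_{q'}\setminus d_q$; clause (ii)(b) of $\leq_\DDD$ applied to the pair $(q',q)$ with $f_\alpha\in\mathscr{F}_q$ then yields $g(\beta)=a_{q'}(\beta)\neq f_\alpha(\beta)$, so $\{\beta<\omega_1\mid g(\beta)=f_\alpha(\beta)\}\subseteq m(h(a))$, as required.

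The principal subtlety is why the witness $b$ must depend on an auxiliary $r\in G$ rather than be taken equal to $a$: below $p_0$ there are conditions $r$ with $d_r\not\subseteq m(a)$ whose already-frozen values $a_r(\beta)$ for $\beta\in d_r\setminus m(a)$ may accidentally coincide with $f_\alpha(\beta)$ for some $\alpha\in m(a)$, and the formulation $b=a\cup d_r$ absorbs every such $\beta$ into $m(b)$. With this choice the density argument succeeds, so $p_0$ forces $\varphi(\check z)$; Bagaria's theorem combined with $\BPFA$ then transfers $\varphi(z)$ to $V$, which is equivalent to $(\diag)$ for the chosen (but arbitrary) parameters.
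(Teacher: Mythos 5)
Your proof is correct and follows essentially the same route as the paper: Bagaria's $\Sigma_1$-absoluteness characterization of $\BPFA$ applied to Larson's forcing $\DDD$, with the condition $p_0=\langle\emptyset,\{c_\gamma\mid\gamma\in F\},\emptyset\rangle$, the dense sets $E_a$ built from $b=a\cup d_r$, and the same use of Propositions \ref{proposition:TotalFunction} and \ref{proposition:AddFunctions} to verify the generic function $g$ works. The only (harmless) cosmetic differences are that you require $d_q\subseteq m(b)$ rather than $d_q=m(b)$ in $E_a$ and you package the $\forall a\,\exists b$ alternation via a Skolem function $h$, which is not needed since that quantifier block is already bounded.
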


\begin{proof}
 Assume that $\BPFA$ holds and fix a sequence $\vec{f}=\seq{\map{f_\alpha}{\omega_1}{\omega_1}}{\alpha<\omega_1}$ of functions, a finite subset $F$ of $\omega_1$ and a monotone function $\map{m}{[\omega_1]^{{<}\omega}}{[\omega_1]^{{<}\omega}}$. 
 Given $\alpha<\omega_1$, let $\map{c_\alpha}{\omega_1}{\omega_1}$ denote the constant function with value $\alpha$. Moreover, define $p_F=\langle\emptyset,\Set{c_\alpha}{\alpha\in F},\emptyset\rangle$. Then $p_F$ is a condition in $\DDD$. 
 Finally, given $d\in[\omega_1]^{{<}\omega}$, let $E_d$ denote the set of all conditions $p$ in $\DDD$ with the property that there exists $d\subseteq e\in[\omega_1]^{{<}\omega}$ with $d_p=m(e)$ and $f_\alpha\in\mathscr{F}_p$ for all $\alpha\in m(e)$.

 \begin{claim*}
  For every $d\in[\omega_1]^{{<}\omega}$, the set $E_d$ is dense in $\DDD$. 
 \end{claim*}
 
 \begin{proof}[Proof of the Claim]
  Fix a condition $r$ in $\DDD$ and set $e=d\cup d_r\in[\omega_1]^{{<}\omega}$. 
  Since $d_r\subseteq e\subseteq m(e)$, we can now use Proposition \ref{proposition:TotalFunction} to find a condition $q$ in $\DDD$ with $q\leq_\DDD r$ and $d_q=m(e)$. Finally, an application of Proposition \ref{proposition:AddFunctions} yields a condition $p$ in $\DDD$ with $p\leq_\DDD q$, $d_p=d_q=m(e)$ and $f_\alpha\in\mathscr{F}_p$ for all $\alpha\in m(e)$. We then have $p\leq_\DDD r$ and $p\in E_d$. 
 \end{proof}

  Now, let $G$ be $\DDD$-generic over the ground model $\VV$ with $p_F\in G$. Work in $\VV[G]$ and define $g=\bigcup\Set{a_p}{p\in G}$. 
  Then Proposition \ref{proposition:TotalFunction} ensures that $g$ is a function from $\omega_1$ to $\omega_1$. 
  
  \begin{claim*}
   $F\cap\ran{g}=\emptyset$. 
  \end{claim*}
  
  \begin{proof}[Proof of the Claim]
   Fix $\alpha<\omega_1$ and $\beta\in F$. By Proposition \ref{proposition:TotalFunction}, we can find $p\in G$ with $p\leq_\DDD p_F$ and $\alpha\in d_p$. 
   Since $\alpha\notin d_{p_F}=\emptyset$ and $c_\beta\in\mathscr{F}_{p_F}$, the definition of $\DDD$  ensures that $g(\alpha)=a_p(\alpha)\neq c_\beta(\alpha)=\beta$. 
  \end{proof}

  \begin{claim*}
   For every $d\in[\omega_1]^{{<}\omega}$, there exists $d\subseteq e\in[\omega_1]^{{<}\omega}$ with the property that  $$\Set{\beta<\omega_1}{f_\alpha(\beta)=g(\beta)} ~ \subseteq ~ m(e)$$ holds for all $\alpha\in m(e)$. 
  \end{claim*}

   \begin{proof}[Proof of the Claim]
    Using our first claim, we can find $q\in E_d\cap G$. Then there exists $d\subseteq e\in[\omega_1]^{{<}\omega}$ with $d_q=m(e)$. Fix $\alpha\in m(e)$ and $\beta\in\omega_1\setminus m(e)$. 
    By Proposition \ref{proposition:TotalFunction}, there exists $p\in G$ with $p\leq_\DDD q$ and $\beta\in d_p$. 
    Since $\beta\in d_p\setminus d_q$ and $f_\alpha\in\mathscr{F}_q$, the definition of $\DDD$ now implies that $g(\beta)=a_p(\beta)\neq f_\alpha(\beta)$.  
   \end{proof}

   The above claims show that, in $\VV[G]$, there exists a function $\map{g}{\omega_1}{\omega_1}$ with $F\cap\ran{g}=\emptyset$ and the property that for all $d\in[\omega_1]^{{<}\omega}$, there is $d\subseteq e\in[\omega_1]^{{<}\omega}$ such that $\Set{\beta<\omega_1}{f_\alpha(\beta)=g(\beta)}  \subseteq  m(e)$ holds for all $\alpha\in m(e)$.
   Since this statement can be formulated by a $\Sigma_1$-formula with parameters $\vec{f},F,m\in\HH{\omega_2}^\VV$, we can use {\cite[Theorem 5]{MR1773776}} to conclude that  the given  statement also holds in $\VV$. 
\end{proof}


\section{Forcing without large cardinals} \label{nobpfa}

In this section, we prove the following result that shows that 
no large cardinals are needed to establish the  consistency of the principle $(\diag)$.

\begin{theorem}\label{theorem:ConZFC}
 If $\CH$ holds, then there is a proper partial order $\PPP$ that satisfies the $\aleph_2$-chain condition such that  $\mathbbm{1}_\PPP\vdash(\diag)$. 
\end{theorem}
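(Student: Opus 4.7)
My plan is to adapt the $\BPFA$ argument of Theorem \ref{main1} to a direct forcing construction over a model of $\CH$, by iterating (a suitable version of) the partial order $\DDD$ with appropriate bookkeeping.  Working in $\VV \models \CH$, I would build a countable-support iteration $\langle\PPP_\alpha, \dot{\QQQ}_\alpha : \alpha \leq \omega_2\rangle$ of length $\omega_2$, with each $\dot{\QQQ}_\alpha$ a $\PPP_\alpha$-name for $\DDD$ as interpreted in $\VV^{\PPP_\alpha}$.  A standard bookkeeping function, combined with the $\aleph_2$-chain condition established below, ensures that every triple $(\vec f, F, m)\in\VV^{\PPP_{\omega_2}}$ of the form required by $(\diag)$ has a nice name already in some $\VV^{\PPP_\alpha}$ and is handled at some later stage $\beta$; at that stage the generic for $\dot{\QQQ}_\beta$ is forced to lie below the initial condition $p_{F}=\langle\emptyset,\Set{c_\gamma}{\gamma\in F},\emptyset\rangle$ from the proof of Theorem \ref{main1}.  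I would set $\PPP=\PPP_{\omega_2}$.

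Each iterand $\DDD$ is proper by the cited result of \cite{MR2298476}, and Shelah's countable-support iteration theorem then yields that $\PPP$ is proper, hence preserves $\omega_1$.  The principal technical obstacle is verifying the $\aleph_2$-chain condition of $\PPP$; here one cannot naively quote $\aleph_2$-cc of $\DDD$ in the ground model, because two countable elementary submodels of $\HH{\omega_2}$ need not be $\in$-comparable, so unrestricted $\DDD$-conditions can form antichains of size $\aleph_2$ via their side-condition parts.  I would either work with a suitable restriction of $\DDD$ (reflecting the $\mathscr{X}_p$ into a carefully chosen family of submodels) or, preferably, verify that $\DDD$ satisfies Shelah's $\aleph_2$-properness isomorphism condition, which is preserved by countable-support iterations of length $\omega_2$ over a model of $\CH$.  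The verification should combine a delta-system argument on the finite working parts $a_p$ with a pigeon-hole argument on $\mathscr{F}_p$ and a careful analysis of the heights $X\cap\omega_1$ for $X\in\mathscr{X}_p$, applied inductively in every intermediate $\VV^{\PPP_\alpha}$ (where $\CH$ may have already failed but the parameters relevant to the iterand remain of cardinality $\aleph_1$).

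To verify $(\diag)$ in the extension, let $G$ be $\PPP$-generic over $\VV$ and fix an arbitrary triple $(\vec f, F, m)\in\VV[G]$ as in the statement of $(\diag)$.  By the bookkeeping and the $\aleph_2$-chain condition, this triple is handled at some stage $\beta<\omega_2$.  The three claims established inside the proof of Theorem \ref{main1} -- that the generic union is a total function, that its range avoids $F$, and that it satisfies the diagonalization property -- then apply verbatim inside $\VV^{\PPP_{\beta+1}}\subseteq\VV[G]$, yielding a function $g:\omega_1\to\omega_1$ witnessing $(\diag)$ for this triple.  Since the triple was arbitrary, this shows $\mathbbm{1}_\PPP\Vdash(\diag)$.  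The main difficulty throughout is the chain-condition analysis, which must navigate the side-condition structure of $\DDD$ and accommodate the failure of $\CH$ at intermediate stages without destroying the needed preservation property.
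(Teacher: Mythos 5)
Your high-level architecture is the same as the paper's: a countable-support iteration of length $\omega_2$ with bookkeeping over the finite sets $F$, properness of the iterand plus Shelah's iteration theorem, the $\aleph_2$-chain condition obtained from the $\aleph_2$-(properness) isomorphism condition via {\cite[Section VIII, Lemma 2.4]{MR675955}} (Lemma \ref{lemma:shelah}), and then the three claims from the proof of Theorem \ref{main1} applied at an intermediate stage to verify $(\diag)$. The gap is in the step you yourself identify as the principal obstacle: your preferred route is to verify the isomorphism condition for Larson's forcing $\DDD$ itself, and this fails. If $\map{\pi}{\langle Y,\in\rangle}{\langle Z,\in\rangle}$ is as in the definition and $p\in Y$ is a condition whose side-condition chain $\mathscr{X}_p$ contains a model $X$ with $\alpha\in X$ (so $X\notin Z$ and $X\neq\pi(X)$), then $X$ and $\pi(X)$ are distinct countable elementary submodels of $\HH{\omega_2}$ with the same transitive collapse, hence $X\cap\omega_1=\pi(X)\cap\omega_1$ and neither is an element of the other. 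Any $q\leq_\DDD p,\pi(p)$ would have to contain both $X$ and $\pi(X)$ in the single finite $\in$-chain $\mathscr{X}_q$, which is impossible; so $p$ and $\pi(p)$ are simply incompatible and $\DDD$ does not satisfy the $\aleph_2$-isomorphism condition. No delta-system argument on $a_p$ or pigeonhole on $\mathscr{F}_p$ addresses this, since the obstruction lives entirely in the model side conditions, and your fallback of ``a suitable restriction of $\DDD$'' is not specified.

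The missing idea, which is the actual content of Section \ref{nobpfa}, is to change the forcing rather than to analyze $\DDD$ more carefully: following the matrix-of-models technique of {\cite[Section 4]{MR792822}}, one replaces the chain $\mathscr{X}_p$ by a function $t_p$ defined on a finite $\in$-chain of countable \emph{transitive} sets, assigning to each a finite set of elementary submodels of $\HH{\omega_2}$ that all collapse to it (the forcing $\EEE$). Because $\pi$ fixes transitive sets and the finite working part, $p$ and $\pi(p)$ then have identical $\mathscr{C}$-chains and working parts, and one amalgamates simply by merging the tags $t_p$ and $t_{\pi(p)}$; Lemmas \ref{lemma:GenericCondition} and \ref{lemma:IsoCondition} show that $\EEE$ is proper and satisfies the isomorphism condition, where in particular the transfer clause $\pi[G\cap Y]=G\cap Z$ requires a further nontrivial argument (adding $\mathscr{F}_{\pi(s)}$ to a condition and spreading models through all copies $\pi_{i,j}$) that your sketch does not anticipate. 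Once $\DDD$ is replaced by $\EEE$ (iterating below the conditions $q_{F_\gamma}$, with Proposition \ref{proposition:PropertiesOfEEE} in place of the claims of Theorem \ref{main1}), the rest of your plan goes through essentially as in the paper's proof of Theorem \ref{theorem:ConZFC}.
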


 Following the arguments in  {\cite[Section 4]{MR792822}}, we now  introduce a \emph{matrix version} of Larson's forcing $\DDD$. We then use techniques developed in   {\cite[Section VIII.2]{MR675955}} to show that the constructed partial order possesses the properties listed in Theorem \ref{theorem:ConZFC}.

\begin{definition}
  We let $\EEE$ denote the partial order defined by the following clauses: 
\begin{enumerate}
 \item A condition in $\EEE$ is a triple $p=\langle a_p,\mathscr{F}_p,t_p\rangle$ such that the following statements hold: 
 \begin{enumerate}
  \item $a_p$ is a function from a finite subset $d_p$ of $\omega_1$ into $\omega_1$. 
  
  \item $\mathscr{F}_p$ is a finite set of functions from $\omega_1$ to $\omega_1$. 
  
  \item $t_p$ is a function from a finite $\in$-chain $\mathscr{C}_p$ of countable transitive sets to  the set of non-empty finite subsets of $\HH{\omega_2}$. 
  
  \item If $M\in\mathscr{C}_p$ and $X\in t_p(M)$, then $X$ is a countable  elementary submodel of $\HH{\omega_2}$ and  $M$ is the transitive collapse of $X$. 
  
  \item If $M,N\in\mathscr{C}_p$ with $M\in N$ and $X\in t_p(M)$, then there is $Y\in t_p(N)$ with $X\in Y$. 
  
  \item If $M\in\mathscr{C}_p$,  $X\in t_p(M)$ and $\alpha\in d_p\cap X$, than $a_p(\alpha)\in X$. 
  
  \item If $M\in\mathscr{C}_p$,  $X\in t_p(M)$, $\alpha\in d_p\setminus X$ and $f\in X$ is a function from $\omega_1$ to $\omega_1$, then $a_p(\alpha)\neq f(\alpha)$. 
 \end{enumerate}
 
 \item Given conditions $p$ and $q$ in $\EEE$, we have $q\leq_\EEE p$ if and only if the following statements hold: 
  \begin{enumerate}
   \item $d_p\subseteq d_q$, $a_p=a_q\restriction d_p$, $\mathscr{F}_p\subseteq\mathscr{F}_q$, $\mathscr{C}_p\subseteq\mathscr{C}_q$ and $t_p(M)\subseteq t_q(M)$ for all $M\in\mathscr{C}_p$. 
   
   \item If $\alpha\in d_q\setminus d_p$ and $f\in\mathscr{F}_p$, then $a_q(\alpha)\neq f(\alpha)$. 
  \end{enumerate}
\end{enumerate}
\end{definition}

In order to prove that the partial order $\EEE$ is proper, we start by showing that {\cite[Lemma 6.1]{MR2298476}} can directly be adapted to the matrix forcing $\EEE$, using the same proof.

\begin{lemma}\label{lemma:LarsonUltraSort}
 Let $p$ be a condition in $\EEE$ and let $D$ be a subset of $\EEE$  that is dense below $p$. 
 Then there exists $\lambda<\omega_1$ with the property that for every finite set $\mathscr{F}$ of functions from $\omega_1$ to $\omega_1$, there exists $q\in D$ below $p$ with $a_q\subseteq\lambda\times\lambda$ and $a_q(\alpha)\neq f(\alpha)$ for all $\alpha\in d_q\setminus d_p$ and $f\in\mathscr{F}$. 
\end{lemma}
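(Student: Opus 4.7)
The plan is to adapt Larson's argument for the analogous statement about $\DDD$ in \cite[Lemma 6.1]{MR2298476} to the matrix forcing $\EEE$. First I would pick a sufficiently large regular cardinal $\theta$ and a countable elementary submodel $X\prec\HH{\theta}$ with $\{p,D,\EEE\}\subseteq X$, arranged so that $\lambda:=X\cap\omega_1$ is an ordinal, and claim that this $\lambda$ witnesses the lemma. Letting $M$ denote the transitive collapse of $X$, I would next define $p^*\leq_\EEE p$ by placing the new matrix entry $\langle M,\{X\}\rangle$ at the top of the chain: $p^*=\langle a_p,\mathscr{F}_p,t_p\cup\{\langle M,\{X\}\rangle\}\rangle$. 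Verifying $p^*\in\EEE$ relies on $p\in X$, which forces the finite components $\mathscr{C}_p$, $\ran(t_p)$, $d_p$, $\ran(a_p)$, $\mathscr{F}_p$ of $p$ to lie in $X$: the transitive collapses of the $Y\in\ran(t_p)$ belong to $M$, so the chain $\mathscr{C}_p\cup\{M\}$ stays $\in$-linear, and clauses (f)--(g) of the definition of $\EEE$ at the new top level are automatic, since $d_p\cup\ran(a_p)\subseteq X\cap\omega_1=\lambda$.

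Given the finite set $\mathscr{F}$, I would then set $p^{**}:=\langle a_{p^*},\mathscr{F}_{p^*}\cup\mathscr{F},t_{p^*}\rangle\leq_\EEE p^*$ and aim to produce $q\in D$ with $q\leq_\EEE p^{**}$ and $d_q\subseteq\lambda$. The other required properties will then follow automatically: matrix clause (f) applied to $X\in t_q(M)$ yields $\ran(a_q)\subseteq X\cap\omega_1=\lambda$, so $a_q\subseteq\lambda\times\lambda$, and the avoidance clause in the definition of $\leq_\EEE$ applied to $q\leq_\EEE p^{**}$ with $f\in\mathscr{F}_{p^{**}}\supseteq\mathscr{F}$ gives $a_q(\alpha)\neq f(\alpha)$ for all $\alpha\in d_q\setminus d_p$ and $f\in\mathscr{F}$. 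To produce $q$, I would inductively build a decreasing sequence $\langle r_n\rangle_{n<\omega}$ of conditions in $\EEE$ below $p^{**}$ with $d_{r_n}\subseteq\lambda$, meeting each of the countably many dense subsets $D_m\in X$ of $\EEE$ below $p$ (enumerated so that $D$ appears as some $D_{m_0}$). At each stage I would, by elementarity, pick an ``$X$-visible'' witness $r'\in D_m\cap X$ extending the $X$-part of the previous condition, and then amalgamate $r'$ with the matrix entry $\langle M,\{X\}\rangle$ and the added function set $\mathscr{F}$ to produce $r_{n+1}$; since $r'\in X$, the amalgamation $\langle a_{r'},\mathscr{F}_{r'}\cup\mathscr{F},t_{r'}\cup\{\langle M,\{X\}\rangle\}\rangle$ is easily checked to be a condition in $\EEE$ extending $p^{**}$ and satisfying $d\subseteq\lambda$ by the same verification as in the construction of $p^*$. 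Setting $q:=r_{m_0+1}$ yields the desired condition.

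The hard part is coordinating the amalgamation in the inductive step so that $r_{n+1}$ genuinely lies in $D_m$, while also extending $p^{**}$ with $d_{r_{n+1}}\subseteq\lambda$: the naive amalgamation of $r'\in D_m\cap X$ with the exterior data $\langle M,\{X\}\rangle$ and $\mathscr{F}$ need not itself be in $D_m$. The resolution is the key structural property of the matrix forcing that conditions inside $X$ only ``see'' finite data from $X$, while the added matrix entry $\langle M,\{X\}\rangle$ and the added functions $\mathscr{F}\notin X$ are transparent from the $X$-internal viewpoint; this allows one to phrase the density requirement ``meet $D_m$ with $d\subseteq\lambda$ below $p^{**}$'' as a density statement inside $X$ that is automatically witnessed by elementarity, precisely because $D_m\in X$ is dense below $p$ in $\EEE$ and the amalgamation with the top matrix entry is a ``free'' operation at the $X$-level. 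This mirrors Larson's original argument for $\DDD$, replacing his $\in$-chain of submodels with the more delicate matrix structure of $\EEE$, the role of $X$ at the top of the chain being played here by the top matrix entry $\langle M,\{X\}\rangle$.
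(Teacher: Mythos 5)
There is a genuine gap, and it sits exactly at the step you call the ``hard part.'' Your plan is to take $X\prec\HH{\theta}$ with $p,D\in X$, set $\lambda=X\cap\omega_1$, put the collapse of $\HH{\omega_2}\cap X$ on top of the matrix, add the external finite set $\mathscr{F}$ to the side functions to form $p^{**}$, and then find $r'\in D\cap X$ that can be amalgamated over $p^{**}$. But for the amalgam $\langle a_{r'},\mathscr{F}_{r'}\cup\mathscr{F},t_{r'}\cup\{\langle M,\{X\}\rangle\}\rangle$ to extend $p^{**}$ (or your previous $r_n$), clause (2)(b) of the definition of $\leq_\EEE$ demands $a_{r'}(\alpha)\neq f(\alpha)$ for every $\alpha\in d_{r'}\setminus d_p$ and every $f\in\mathscr{F}$. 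The functions in $\mathscr{F}$ are arbitrary and in general not elements of $X$, so no choice of $r'$ made ``by elementarity'' inside $X$ can see this requirement, and nothing forces some member of $D\cap X$ to satisfy it; your claim that the requirement is ``transparent from the $X$-internal viewpoint'' and ``automatically witnessed by elementarity'' is precisely the unproved assertion. In the paper's development this assertion is equivalent to the $(X,\EEE)$-genericity of conditions like $p^{**}$, which is Lemma \ref{lemma:GenericCondition} --- and that lemma is proved \emph{using} Lemma \ref{lemma:LarsonUltraSort}. So your argument runs the logic backwards and, as a proof of this lemma, begs the question. Note also that the lemma only asserts that \emph{some} $\lambda$ works; that every $\lambda$ of the form $X\cap\omega_1$ works is something one can deduce only after properness/genericity is available, not before.

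The paper's proof is quite different and makes no use of elementary submodels: it argues by contradiction, following Larson. If no $\lambda$ works, then for each $\lambda<\omega_1$ there is a finite set $\mathscr{F}_\lambda=\{f^\lambda_0,\dots,f^\lambda_{n-1}\}$ such that every $q\in D$ below $p$ with $a_q\subseteq\lambda\times\lambda$ agrees with some $f^\lambda_i$ at some point of $d_q\setminus d_p$. Fixing a uniform ultrafilter $U$ on $\omega_1$ and using that each $a_q$ is finite (hence $a_q\subseteq\lambda\times\lambda$ for $U$-many $\lambda$), one uniformizes: for each $q$ there are $\alpha_q\in d_q\setminus d_p$ and $i_q<n$ chosen on a $U$-large set, and intersecting large sets shows the value $a_q(\alpha_q)$ depends only on $(\alpha_q,i_q)$, yielding finitely many functions $h_0,\dots,h_{n-1}$ with $h_{i_q}(\alpha_q)=a_q(\alpha_q)$ for \emph{all} $q\in D$ below $p$. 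Adding these to the side functions gives the condition $\langle a_p,\mathscr{F}_p\cup\{h_i\mid i<n\},t_p\rangle\leq_\EEE p$, below which no element of $D$ can lie, contradicting density of $D$ below $p$. If you want to repair your write-up, this ultrafilter uniformization (rather than an elementary-submodel amalgamation) is the missing idea; the submodel machinery belongs to the subsequent properness argument, where the present lemma is applied inside $Z$ to obtain the ordinal $\lambda$.
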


\begin{proof}
 Assume, towards a contradiction, that the above statement fails. 
 Then there exists a sequence $\seq{\mathscr{F}_\lambda}{\lambda<\omega_1}$ of finite, non-empty sets of functions from $\omega_1$ to $\omega_1$ with the property that for every $\lambda<\omega_1$ and every $q\in D$ below $p$ with $a_q\subseteq\lambda\times\lambda$, there exists $\alpha^\lambda_q\in d_q\setminus d_p$ and $f^\lambda_q\in\mathscr{F}_\lambda$ with $a_q(\alpha^\lambda_q)=f^\lambda_q(\alpha^\lambda_q)$. 
 Without loss of generality, we may assume that there is $0<n<\omega$ with $\betrag{\mathscr{F_\lambda}}=n$ for all $\lambda<\omega_1$. 
 Given $\lambda<\omega_1$, pick functions $f^\lambda_0,\ldots,f^\lambda_{n-1}$ from $\omega_1$ to $\omega_1$ with $\mathscr{F}_\lambda=\{f^\lambda_0,\ldots,f^\lambda_{n-1}\}$. 

 Now, fix a uniform ultrafilter $U$ on $\omega_1$. 
 For each $q\in D$ below $p$, we can now find $\alpha_q\in d_q\setminus d_p$ and $i_q<n$ with the property that $$A_q ~ = ~ \Set{\lambda<\omega_1}{a_q\subseteq\lambda\times\lambda, ~ \alpha^\lambda_q=\alpha_q , ~ f^\lambda_q=f^\lambda_{i_q}} ~ \in ~ U.$$
 Then, for all $q,r\in D$ below $p$ with $\alpha_q=\alpha_r$ and $i_q=i_r$, we can find $\lambda\in A_q\cap A_r$ and this allows us to conclude that $$a_q(\alpha_q) ~ = ~ f^\lambda_{i_q}(\alpha_q) ~ = ~ a_r(\alpha_q).$$
 This shows that there are functions  $\map{h_0,\ldots,h_{n-1}}{\omega_1}{\omega_1}$ such that $h_{i_q}(\alpha_q)=a_q(\alpha_q)$ holds for all $q\in D$ below $p$. %
 Define $$q ~ = ~ \langle a_p, ~ \mathscr{F}_p\cup\Set{h_i}{i<n}, ~ t_p\rangle.$$ Then it is easy to see that $q$ is a condition in $\EEE$ below $p$. Pick $r\in D$ below $q$. 
 Then $\alpha_r\in d_r\setminus d_p=d_r\setminus d_q$ and $a_r(\alpha_r)=h_{i_r}(\alpha_r)$, contradicting the fact that $r\leq_\EEE q$.  
\end{proof}

\begin{lemma}\label{lemma:GenericCondition}
 Let $\theta$ be a sufficiently large regular cardinal and let $Z$ be a countable elementary submodel of $\HH{\theta}$.
 If $p$ is a condition in $\EEE$ with  $\HH{\omega_2}\cap Z\in t_p(N)$ for some $N\in\mathscr{C}_p$, then $p$ is a $(Z,\EEE)$-generic condition. 
\end{lemma}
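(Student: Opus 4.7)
The plan is to mimic the properness proof for $\DDD$ in \cite[Theorem 6.2]{MR2298476}, adapted to the matrix setting of $\EEE$ in the style of \cite[Section 4]{MR792822}. Fix a dense set $D\in Z$ and a condition $q\leq_\EEE p$; the task is to produce $r\in D\cap Z$ compatible with $q$. Set $X = \HH{\omega_2}\cap Z$ and $\delta = X\cap\omega_1$, and note that $N\in\mathscr{C}_q$ with $X\in t_q(N)$ by the clauses of $\EEE$ and the ordering.

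The first move is to isolate the ``trace'' of $q$ on $X$, namely
\[ \bar q \;=\; \langle a_q\restriction(d_q\cap\delta),\ \mathscr{F}_q\cap X,\ t_q\restriction(\mathscr{C}_q\cap N)\rangle, \]
where $\mathscr{C}_q\cap N$ collects those elements of $\mathscr{C}_q$ that sit strictly $\in$-below $N$. Since every countable transitive set in $X$ is actually contained in $X$ (and hence lies in $N$), and every $X'\in t_q(M)$ with $M\in\mathscr{C}_q\cap N$ likewise lies in $X$, a routine check of the axioms (i)--(vii) shows $\bar q\in\EEE\cap X$. Applying Lemma \ref{lemma:LarsonUltraSort} to $\bar q,D\in X$ inside the elementary submodel $X\prec\HH{\omega_2}$ now yields an ordinal $\lambda\in X\cap\omega_1 = \delta$ with the property that for every finite $\mathscr{F}\in X$ of functions from $\omega_1$ to $\omega_1$ there exists $r\in D\cap X$ with $r\leq_\EEE\bar q$, $a_r\subseteq\lambda\times\lambda$, and $a_r(\alpha)\neq f(\alpha)$ for every $\alpha\in d_r\setminus d_{\bar q}$ and $f\in\mathscr{F}$.

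Applying this step with $\mathscr{F} = \mathscr{F}_q\cap X$ produces a candidate $r\in D\cap X\subseteq D\cap Z$; to verify compatibility I would form
\[ s \;=\; \langle a_q\cup a_r,\ \mathscr{F}_q\cup\mathscr{F}_r,\ t_q\cup t_r\rangle, \]
with $t_q\cup t_r$ taken pointwise on $\mathscr{C}_q\cup\mathscr{C}_r$, and argue that $s$ is a common lower bound of $q$ and $r$ in $\EEE$. The verifications that $s$ is a condition of $\EEE$, that $s\leq_\EEE r$ holds, and that most instances of $s\leq_\EEE q$ hold reduce to three observations: $d_r\subseteq\delta$, so no new data outside $X$ is created; the values of $a_r$ lie below $\lambda<\delta$, so the model axioms (vi) and (vii) remain valid for $s$; and $\mathscr{C}_r\subseteq N$, so $\mathscr{C}_q\cup\mathscr{C}_r$ is still an $\in$-chain.

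The step I expect to be the principal obstacle is clause (b) of the ordering for $s\leq_\EEE q$: it demands $a_r(\alpha)\neq f(\alpha)$ for every $\alpha\in d_r\setminus d_q\subseteq\delta$ and every $f\in\mathscr{F}_q$. The case $f\in\mathscr{F}_q\cap X$ is precisely what the ultrasort step delivered, whereas the case $f\in\mathscr{F}_q\setminus X$ is genuinely more delicate, since such an $f$ cannot be included in the finite $\mathscr{F}$ fed to the $X$-interior application of the lemma. My plan for this step, in line with the matrix bookkeeping of \cite[Section 4]{MR792822}, is to exploit that $X\in t_q(N)$ couples the outside-$X$ values of $a_q$ with functions in $X$ through axiom (vii), and, if needed, to first pass to a strengthening of $q$ (still $\leq_\EEE p$) that absorbs the finitely many problematic values of each $f\in\mathscr{F}_q\setminus X$ on $[0,\lambda)$ into the model-theoretic part of the condition, so that the required inequality becomes automatic from the bound $a_r\subseteq\lambda\times\lambda$.
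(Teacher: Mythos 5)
Your outline breaks down at two essential points, both of which are exactly where the ``matrix'' structure of $\EEE$ differs from Larson's $\DDD$. First, your trace condition $\bar q=\langle a_q\restriction(d_q\cap\delta),\mathscr{F}_q\cap X,t_q\restriction(\mathscr{C}_q\cap N)\rangle$ need not be an element of $Z$: for $M\in\mathscr{C}_q\cap N$, clause (e) of the definition of $\EEE$ only guarantees that each $X'\in t_q(M)$ belongs to \emph{some} $Y\in t_q(N)$, i.e.\ to some copy $Z_i$ with $t_q(N)=\{Z_0,\dots,Z_{k-1}\}$ and $Z_0=\HH{\omega_2}\cap Z$, not to $Z_0$ itself; so your assertion that every such $X'$ lies in $X$ is false when $k>1$, and without $\bar q\in Z$ the reflection step collapses. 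The paper's proof repairs precisely this by transporting the side models into $Z_0$ via the unique isomorphisms $\map{\pi_i}{Z_i}{Z_0}$ (with the index sets $I(X')$ keeping the chain-coherence clause intact), and, dually, when amalgamating it copies the new models of the reflected condition back into \emph{every} $Z_i$ via $\pi_i^{-1}$ -- this copying is what makes $t_q(M)\subseteq t_u(M)$, hence $u\leq_\EEE q$, provable. None of this bookkeeping appears in your proposal, and the naive pointwise union $t_q\cup t_r$ does not yield a condition below $q$ in general. (A smaller issue of the same kind: you cannot apply Lemma \ref{lemma:LarsonUltraSort} ``inside $X\prec\HH{\omega_2}$'', since a dense $D\subseteq\EEE$ is not an element of $\HH{\omega_2}$; the application has to take place in $Z\prec\HH{\theta}$.)

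Second, the obstacle you flag concerning $f\in\mathscr{F}_q\setminus X$ is real, but your proposed fix cannot work: the side models constrain $a$-values only at coordinates \emph{outside} them (clause (g)), while the coordinates in question lie below $\delta=Z\cap\omega_1$ and hence inside every model involved, so there is no way to ``absorb the problematic values of $f$ into the model-theoretic part'' to force $a_r(\alpha)\neq f(\alpha)$. The correct mechanism is the quantifier order built into Lemma \ref{lemma:LarsonUltraSort}: applied in $Z$ to $\bar q$ and $D$ it gives a single $\lambda\in Z\cap\omega_1$ that works for \emph{every} finite set $\mathscr{F}$ of functions (by elementarity this property of $\lambda$ holds in $\VV$, not just for $\mathscr{F}\in Z$). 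One then chooses, in $\VV$, a condition $r\in D$ below $\bar q$ with $a_r\subseteq\lambda\times\lambda$ avoiding all of $\mathscr{F}_q$ -- including the functions outside $Z$ -- and only afterwards uses elementarity of $Z$ to find $s\in D\cap Z$ with $a_s=a_r$ and $s\leq_\EEE\bar q$; since clause (b) of the ordering only sees the finite function $a_s$, the avoidance of $\mathscr{F}_q\setminus X$ survives this reflection. Your version, which feeds only $\mathscr{F}_q\cap X$ into the lemma and keeps $r\in X$, loses exactly this information and cannot recover it.
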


\begin{proof}
 Pick a dense subset $D$ of $\EEE$ that is contained in $Z$ and a condition $q$ in $\EEE$ below $p$. 
 Set $\mathscr{C}=\mathscr{C}_q\cap N$. 
 Then $\mathscr{C}$ is a finite $\in$-chain of countable transitive sets. 
 Moreover, the definition of our forcing ensures that for every $M\in\mathscr{C}$, we can find $X\in t_q(M)$ and $Y\in t_q(N)$ with $X\in Y$. In particular, we know that every element of $\mathscr{C}$ is countable  in $N$ and this shows that $\mathscr{C}$ is a subset of $Z$.

  Now, let $\betrag{t_q(N)}=k>0$ and pick sets $Z_0,\ldots,Z_{k-1}$ such that $Z_0=\HH{\omega_2}\cap Z$ and $t_q(N)=\{Z_0,\ldots,Z_{k-1}\}$. 
 Given $i<k$, since $N$ is the transitive collapse of $Z_i$, there exists a unique isomorphism $\map{\pi_i}{\langle Z_i,\in\rangle}{\langle Z_0,\in\rangle}$. 
 Next, for all  $M\in\mathscr{C}$ and $X\in t_q(M)$, we let $I(X)$ denote the set of all $i<k$ with the property that $X\in Z_i$ and there exists a finite $\in$-chain $C$ of elements of $Z_i$ with the property that for all $M^\prime\in\mathscr{C}$ with $M\in M^\prime$, there exists $X^\prime\in C$ with $X\in X^\prime\in t_q(M^\prime)$.  
 Note that, since $\mathscr{C}$
 is finite, the definition of $\EEE$ then ensures that $I(X)\neq\emptyset$ holds for all $M\in\mathscr{C}$ and $X\in t_q(M)$. 
 Define $t$ to be the unique function with domain $\mathscr{C}$ and $$t(M) ~ = ~ \Set{\pi_i(X)}{X\in t_q(M), ~ i\in I(X)}$$ for all $M\in\mathscr{C}$. 
 Finally, set $$\bar{q} ~ = ~ \langle a_q\cap Z, ~  \mathscr{F}_q\cap Z, ~ t\rangle.$$

 \begin{claim*}
  $\bar{q}$ is a condition in $\EEE$ that is an element of $Z$. 
 \end{claim*}
 
 \begin{proof}[Proof of the Claim]
  First,  fix $M\in\mathscr{C}$ and $X\in t(M)$. Then there is $X^\prime\in t_q(M)$ and $i\in I(X^\prime)$ with $X=\pi_i(X^\prime)$. 
  Since $X^\prime\subseteq Z_i$ and both sets are elementary submodels of $\HH{\omega_2}$, we know that $X^\prime$ is an elementary submodel of $Z_i$ and therefore elementarity implies that $X$ is  an elementary submodel of both $Z_0$ and $\HH{\omega_2}$. 
  Moreover, since $M$ is the transitive collapse of $X^\prime$, we can conclude that $M$ is also the transitive collapse of $X$.

  Now, fix $M_0,M_1\in\mathscr{C}$ with $M_0\in M_1$ and $X_0\in t(M_0)$. Then there exists $X^\prime\in t_q(M_0)$ and $i\in I(X^\prime)$ with $X=\pi_i(X^\prime)$. 
  By the definition of $I(X^\prime)$, there exists a finite $\in$-chain $C$ of elements of $Z_i$ with the property that for all $M\in\mathscr{C}$ with $M_0\in M$, there exists $X\in C$ with $X^\prime\in X\in t_q(M)$. 
  Pick $X^{\prime\prime}\in C\cap t_q(M_1)$ and set $X_1=\pi_i(X^{\prime\prime})$. 
  Then the $\in$-chain $\Set{X\in C}{X^{\prime\prime}\in X}$ witnesses that $i\in I(X^{\prime\prime})$ and hence $X_1$ is an element of $t(M_1)$ with $X_0\in X_1$.

  Next, pick $M\in\mathscr{C}$, $X\in t(M)$ and $\alpha\in d_q\cap X\cap Z=d_q\cap X$. 
  Then there is $X^\prime\in t_q(M)$ and $i\in I(X^\prime)$ with $X=\pi_i(X^\prime)$. 
  Since $d_q\cap X=d_q\cap X^\prime$, we have $M\in\mathscr{C}_q$, $X^\prime\in t_q(M)$ and $\alpha\in d_q\cap X^\prime$. 
  By the definition of $\EEE$, this implies that $a_q(\alpha)\in X^\prime$ and, since $X\cap\omega_1=X^\prime\cap\omega_1$, we can conclude that $a_q(\alpha)\in X$.

  Finally, fix $M\in\mathscr{C}$, $X\in t(M)$, $\alpha\in (d_q\cap Z)\setminus X$ and a function $f$ from $\omega_1$ to $\omega_1$ in $X$. 
  Pick $X^\prime\in t_q(M)$ and $i\in I(X^\prime)$ with $X=\pi_i(X^\prime)$. 
  Then the fact that $X\cap\omega_1=X^\prime\cap\omega_1$ implies that $\alpha\in d_q\setminus X^\prime$. 
  In this situation, the definition of $\EEE$ and the fact that $\pi_i^{{-}1}\restriction(X\cap\omega_1)=\id_{X\cap\omega_1}$ imply that $$a_q(\alpha) ~ \neq ~ (\pi_i^{{-}1}(f))(\alpha) ~ = ~ \pi_i^{{-1}}(f(\alpha)) ~ = ~ f(\alpha).$$

  The above computations shows that $\bar{q}$ is a condition in $\EEE$. Since all relevant sets are finite, the fact that $\mathscr{C}$ is a subset of $Z$ allows us to conclude that $\bar{q}$  is an element of $Z$. 
 \end{proof}

 An application of Lemma \ref{lemma:LarsonUltraSort} in $Z$ now yields an ordinal $\lambda\in Z\cap\omega_1$ with the property that for every finite set $\mathscr{F}$ of functions from $\omega_1$ to $\omega_1$, there exists $r\in D$ below $\bar{q}$ with $a_r\subseteq\lambda\times\lambda$ and $a_r(\alpha)\neq f(\alpha)$ for all $\alpha\in d_r\setminus d_{\bar{q}}$ and $f\in\mathscr{F}$. 
  Hence, there exists $r\in D$ below $\bar{q}$ with $a_r\subseteq\lambda\times\lambda$ and $a_r(\alpha)\neq f(\alpha)$ for all $\alpha\in d_r\setminus d_{\bar{q}}$ and $f\in\mathscr{F}_q$.
  Since $a_r\subseteq\lambda\times\lambda\subseteq Z$, elementarity yields a condition $s\in D\cap Z$ with $a_r=a_s$ and $s\leq_\EEE \bar{q}$. 
  Define $\mathscr{C}_*=\mathscr{C}_q\cup\mathscr{C}_s$ and let  $t_*$ denote the unique function with domain $\mathscr{C}_*$ such that $t_*(M)=t_q(M)$ for all $M\in\mathscr{C}_q\setminus\mathscr{C}_s$ and $$t_*(M) ~ = ~  \Set{\pi_i^{{-}1}(X)}{i<k, ~ X\in t_s(M)}$$ 
  for all $M\in\mathscr{C}_s$. 
  Finally, we set  $$u ~ = ~ \langle a_q\cup a_s, \mathscr{F}_q\cup\mathscr{F}_s,t_*\rangle.$$

  \begin{claim*}
   $u$ is a condition in $\EEE$. 
  \end{claim*}
  
  \begin{proof}[Proof of the Claim]
   First, fix $\alpha\in d_q\cap d_s$. Since $d_s=d_r\subseteq \lambda\subseteq Z$, we  know that $\alpha\in d_q\cap Z=d_{\bar{q}}$ and therefore the fact that $r\leq_\EEE\bar{q}$ allows us to conclude that $$a_q(\alpha) ~ = ~ a_{\bar{q}}(\alpha) ~ = ~ a_r(\alpha) ~ = ~ a_s(\alpha).$$ In particular, we know that $a_q\cup a_s$ is a function.

   Now, fix $M_0\in\mathscr{C}_q\setminus\mathscr{C}_s$ and $M_1\in\mathscr{C}_s\setminus\mathscr{C}_q$. 
   Then $M_0\notin\mathscr{C}_q\cap N=\mathscr{C}_{\bar{q}}\subseteq\mathscr{C}_s$ and hence $M_0\notin N$. 
   Since $M_0$ and $N$ are both contained in the $\in$-chain $\mathscr{C}_q$, we now know that either $M_0=N$ or $N\in M_0$. 
   But $M_1\in\mathscr{C}_s\subseteq Z$ implies that $M_1\in N$ and therefore we know that $M_1\in M_0$ holds in both cases.  
   These computations show that $\mathscr{C}_*$ is an $\in$-chain.

   Next, pick $M_0,M_1\in\mathscr{C}_*$ with $M_0\in M_1$ and $X_0\in t_*(M_0)$. 
   If $M_0,M_1\in\mathscr{C}_q\setminus\mathscr{C}_s$, then $X_0=t_*(M_0)=t_q(M_0)$ and there is $X_1\in t_q(M_1)=t_*(M_1)$ with $X_0\in X_1$. 
   Now, assume that $M_1\in\mathscr{C}_s$. Since $\mathscr{C}_s\subseteq Z$, we then have $M_1\in N$ and, since $\mathscr{C}_q\cap N=\mathscr{C}_{\bar{q}}\subseteq\mathscr{C}_s$, we know that $M_0\in\mathscr{C}_s$.  
   We can now find $i<k$ and $X^\prime\in t_s(M_0)$ with $X_0=\pi_i^{{-}1}(X^\prime)$. Pick $X^{\prime\prime}\in t_s(M_1)$ with $X^\prime\in X^{\prime\prime}$ and set $X_1=\pi_i^{{-}1}(X^{\prime\prime})$. Then $X_0\in X_1\in t_*(M_1)$. 
   Finally, assume that $M_0\in\mathscr{C}_s$ and $M_1\in\mathscr{C}_q\setminus\mathscr{C}_s$. As above, we  know that either $M_1=N$ or $N\in M_1$. 
   In the first case, if $M_1=N$ and $X_0=\pi_i^{{-}1}(X)$ with $i<k$ and $X\in t_s(M)$, then $X_0\in Z_i\in t_q(N)=t_*(M_1)$. 
   In the other case, if $M_1\in N$ and $X_0=\pi_i^{{-}1}(X)$ with $i<k$ and $X\in t_s(M)$, then $Z_i\in t_q(N)$, there is $X_1\in t_q(M_1)=t_*(M_1)$ with $Z_i\in X_1$ and elementarity implies that $X_0\in X_1$. 
   These computations show that, in all cases, there exists $X_1\in t_*(M_1)$ with $X_0\in X_1$.

   Now, fix $M\in\mathscr{C}_*$, $X\in t_*(M)$ and $\alpha\in (d_q\cup d_s)\cap X$. 
   First, assume that  $M\in\mathscr{C}_q\setminus\mathscr{C}_s$ and $\alpha\in d_s\cap X$. Since $M,N\in\mathscr{C}_q$ and $M\notin\mathscr{C}_q\cap N=\mathscr{C}_{\bar{q}}\subseteq\mathscr{C}_s$, we know that either $M=N$ or $N\in N$, and  both cases imply that $Z\cap\omega_1\subseteq X$. In particular, we know that $(a_q\cup a_s)(\alpha)=a_s(\alpha)\in Z\cap\omega_1\subseteq X$. 
   Next, if $M\in\mathscr{C}_q\setminus\mathscr{C}_s$ and $\alpha\in d_q\cap X$, then $(a_q\cup a_s)(\alpha)=a_q(\alpha)\in X$. 
   Finally, assume that  $M\in\mathscr{C}_s$.
   Note that $d_q\cap X=d_{\bar{q}}\cap X\subseteq d_s\cap X$. In particular, we know that $\alpha\in d_s\cap X$. 
   Pick $i<k$ and $X^\prime\in t_s(M)$ with $X=\pi_i^{{-}1}(X^\prime)$. 
   Then $\alpha\in d_s\cap X^\prime$ and this implies that $a_s(\alpha)\in X^\prime$. But this shows that $(a_q\cup a_s)(\alpha)=a_s(\alpha)\in X$. %
   These computations show that $(a_q\cup a_s)(\alpha)\in X$.

   Finally, assume that $M\in\mathscr{C}_*$, $X\in t_*(M)$, $\alpha\in(d_q\cup d_s)\setminus X$ and $f$ is a function from $\omega_1$ to $\omega_1$ in $X$. 
   First, assume that $M\in\mathscr{C}_q\setminus\mathscr{C}_s$. As above, we then know that either $M=N$ or $N\in M$ holds and, in both cases, we can conclude that $d_s\subseteq Z\cap\omega_1\subseteq X$. This shows that $\alpha\in d_q\setminus X$ and hence $X\in t_q(M)$ implies that $(a_q\cup a_s)(\alpha)=a_q(\alpha)\neq f(\alpha)$. 
   Next, assume that $M\in\mathscr{C}_s$ and $\alpha\in d_q\setminus(d_s\cup X)$. Since $d_q\cap Z=d_{\bar{q}}\subseteq d_s$, we now know that $\alpha\notin Z$ and therefore $f\in X\subseteq Z_0\in t_q(N)$ implies that $(a_q\cup a_s)(\alpha)=a_q(\alpha)\neq f(\alpha)$. 
   Finally, assume that $M\in\mathscr{C}_s$ and $\alpha\in d_s\setminus X$. Pick $i<k$ and $X^\prime\in t_s(M)$ with $X=\pi_i^{{-}1}(X^\prime)$. Then $\alpha\in d_s\setminus X^\prime$ and $$(a_q\cup a_s)(\alpha) ~ = ~ a_s(\alpha) ~ \neq ~  \pi_i^{{-}1}(f)(\alpha) ~ = ~ f(\alpha).$$ These computations show that $(a_q\cup a_s)(\alpha)\neq f(\alpha)$ holds in all cases.  
  \end{proof}

 \begin{claim*}
  $u\leq_\EEE q$. 
 \end{claim*}
 
 \begin{proof}[Proof of the Claim]
  The definition of $u$ directly implies that $d_q\subseteq d_u$, $a_q=a_u\restriction d_q$, $\mathscr{F}_q\subseteq\mathscr{F}_u$ and  $\mathscr{C}_q\subseteq\mathscr{C}_u$. 
  Now, assume that $M\in \mathscr{C}_q\cap\mathscr{C}_s$ and $X\in t_q(M)$. Then $M\in Z$ and therefore $M\in\mathscr{C}_q\cap N=\mathscr{C}_{\bar{q}}$. Fix $i\in I(X)$. Then $\pi_i(X)\in t_{\bar{q}}(M)\subseteq t_s(M)$ and therefore $X\in t_u(M)$. Since we also have $t_u\restriction(\mathscr{C}_q\setminus\mathscr{C}_s)=t_q\restriction(\mathscr{C}_q\setminus\mathscr{C}_s)$, we can conclude that $t_q(M)\subseteq t_u(M)$ holds for all $M\in\mathscr{C}_q$. 
  Finally, fix $\alpha\in d_u\setminus d_q$ and $f\in\mathscr{F}_q$. 
  Then $d_s\subseteq Z$ implies that $\alpha\in d_s\setminus d_q=d_r\setminus d_{\bar{q}}$ and therefore $a_u(\alpha)=a_s(\alpha)=a_r(\alpha)\neq f(\alpha)$.  
 \end{proof}

 \begin{claim*}
  $u\leq_\EEE s$. 
 \end{claim*}
 
 \begin{proof}[Proof of the Claim]
  The definition of $u$ together with the fact that $\pi_0=\id_{Z_0}$ directly imply that $d_s\subseteq d_u$, $a_s=a_u\restriction d_s$, $\mathscr{F}_s\subseteq\mathscr{F}_u$, $\mathscr{C}_s\subseteq\mathscr{C}_u$ and $t_s(M)\subseteq t_u(M)$ for all $M\in\mathscr{C}_s$. 
  Fix $\alpha\in d_u\setminus d_s$ and $f\in\mathscr{F}_s$. 
  Since $d_q\cap N\subseteq d_s$, we then know that $\alpha\in d_q\setminus Z_0$ and, since $N\in\mathscr{C}_q$, $Z_0\in t_q(N)$ and $f\in Z_0$, we can conclude that $a_u(\alpha)=a_q(\alpha)\neq f(\alpha)$.  
 \end{proof}

 Since $s$ is an element of $D\cap Z$, the above claims show that $p$ is a $(Z,\EEE)$-generic  condition. 
\end{proof}

\begin{corollary}\label{corollary:Proper}
  The partial order $\EEE$ is proper.
\end{corollary}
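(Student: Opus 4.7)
The plan is to derive properness directly from Lemma \ref{lemma:GenericCondition}. Given a sufficiently large regular cardinal $\theta$, a countable elementary submodel $Z$ of $\HH{\theta}$ with $\EEE\in Z$ and a condition $p\in\EEE\cap Z$, the strategy is to produce a condition $q\leq_\EEE p$ satisfying $\HH{\omega_2}\cap Z\in t_q(N)$ for some $N\in\mathscr{C}_q$. Setting $X = \HH{\omega_2}\cap Z$ and letting $N$ denote the Mostowski collapse of $X$ via the isomorphism $\map{\pi}{X}{N}$, I would define $q$ by keeping $a_q = a_p$ and $\mathscr{F}_q = \mathscr{F}_p$, but extending the chain to $\mathscr{C}_q = \mathscr{C}_p\cup\{N\}$ with $t_q\restriction\mathscr{C}_p = t_p$ and $t_q(N) = \{X\}$. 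If this yields a valid condition, then $q\leq_\EEE p$ is immediate (the non-trivial clauses concerning $d_q\setminus d_p$ and $\mathscr{F}_p$ are vacuous since $d_q = d_p$) and Lemma \ref{lemma:GenericCondition} delivers the conclusion.

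The heart of the argument, and the step I expect to be the main obstacle, is verifying the structural clauses of $\EEE$ for $q$. In particular, one must show that $\mathscr{C}_p\cup\{N\}$ is an $\in$-chain and that each $Y\in t_p(M)$, for $M\in\mathscr{C}_p$, belongs to some element of $t_q(N) = \{X\}$. Both reduce to the two assertions $M\in N$ and $Y\in X$, and both rest on the following absoluteness observation: every $M\in\mathscr{C}_p$ satisfies $M\subseteq Z$. Indeed, since $p\in Z$ and $\mathscr{C}_p$ is a finite element of $Z$, we have $M\in Z$; since $M$ is countable, an enumeration $\map{e}{\omega}{M}$ existing in $Z$ by elementarity places every element of $M$ into $Z$, so that $M\subseteq Z\cap\HH{\omega_2} = X$. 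A standard $\in$-induction along the Mostowski collapse then yields $\pi(M) = M$, whence $M\in\pi[X] = N$. The same finiteness argument applied to $t_p(M)\in Z$ gives $t_p(M)\subseteq X$, providing $X$ itself as the required witness in $t_q(N)$.

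The remaining structural requirements for $q$ involve the new pair $\langle N,X\rangle$ and the map $a_q$, and these are straightforward: applying the same enumeration argument to $a_p\in Z$ yields $d_p\cup\ran{a_p}\subseteq Z\cap\HH{\omega_2} = X$, so the clause ``$\alpha\in d_q\cap X$ implies $a_q(\alpha)\in X$'' is verified by the corresponding property of $p$ itself, and the clause concerning $\alpha\in d_q\setminus X$ is vacuous since $d_q = d_p\subseteq X$. With $q$ confirmed to be a condition in $\EEE$ extending $p$, an application of Lemma \ref{lemma:GenericCondition} will deliver that $q$ is $(Z,\EEE)$-generic and thereby establish the properness of $\EEE$.
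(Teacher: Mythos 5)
Your proposal is correct and follows essentially the same route as the paper: extend $p$ by adjoining the pair $\langle N,\{\HH{\omega_2}\cap Z\}\rangle$ to $t_p$ (keeping $a_p$ and $\mathscr{F}_p$) and then invoke Lemma \ref{lemma:GenericCondition}. The paper dismisses the verification that this yields a condition below $p$ as ``easy to see''; your enumeration/collapse argument showing $M\subseteq X$, $M\in N$ and $t_p(M)\subseteq X$ correctly supplies those details.
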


\begin{proof}
 Fix a sufficiently large regular cardinal $\theta$, a countable elementary submodel $X$ of $\HH{\theta}$ and a condition $p$ in $\EEE$ that is an element of $X$. 
 Let $M$ denote the transitive collapse of $\HH{\omega_2}\cap X$ and define $$q ~ = ~ \langle a_p,\mathscr{F}_p,t_p\cup\{\langle M,\{\HH{\omega_2}\cap X\}\rangle\}\rangle.$$ 
 Then it is easy to see that $q$ is a condition in $\EEE$ below $p$. 
 Moreover, Lemma \ref{lemma:GenericCondition} shows that $q$ is $(X,\EEE)$-generic. 
\end{proof}

Following {\cite{MR675955}} and {\cite{MR792822}}, we give the following definition.
\begin{definition}

A partial order $\PPP$ satisfies the \emph{$\aleph_2$-isomorphism condition}\footnote{In \cite{MR675955}, this property is called \emph{$\aleph_2$-properness isomorphism condition}. We follow the naming conventions of \cite{MR792822}.} if  for 
 \begin{itemize}
  \item all sufficiently large  regular cardinals $\theta$, 
  
  \item all well-orderings $\lhd$ of $\HH{\theta}$, 
  
  \item all ordinals  $\alpha<\beta<\omega_2$, 
  
  \item all countable elementary submodel $Y$ and $Z$ of $\langle\HH{\theta},\in,\lhd\rangle$ with $\alpha\in Y$, $\beta\in Z$ and $\PPP\in Y\cap Z$,  $Y\cap\omega_2\subseteq \beta$ and $Y\cap \alpha=Z\cap\beta$,
  
  \item all conditions $p$ in $\PPP$ that are contained in $Y$, and 
  
  \item all isomorphisms $\map{\pi}{\langle Y,\in\rangle}{\langle Z,\in\rangle}$ with $\pi(\alpha)=\beta$ and $\pi\restriction(Y\cap Z)=\id_{Y\cap Z}$, 
 \end{itemize}   
 there exists an $(Y,\PPP)$-generic condition $q$ below both $p$ and $\pi(p)$ with the property that $\pi[G\cap Y]=G\cap Z$ holds whenever $G$ is $\PPP$-generic over $\VV$ with $q\in G$. 
\end{definition}

\begin{lemma}
\label{lemma:IsoCondition} The partial order $\EEE$ satisfies the $\aleph_2$-isomorphism condition. 
\end{lemma}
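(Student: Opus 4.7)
The strategy is to adapt Shelah's proof of the $\aleph_2$-properness isomorphism condition from \cite[Section~VIII.2]{MR675955}, following the refinement for a related forcing in \cite[Section~4]{MR792822}: I construct a symmetric amalgam $q$ of $p$ and $\pi(p)$ that puts both $\HH{\omega_2}\cap Y$ and $\HH{\omega_2}\cap Z$ into $t_q$ at a common top level, and then exploit this symmetry to verify the filter-shift property.

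First I extract from the hypotheses that $\pi$ fixes $\HH{\omega_1}\cap Y$ pointwise. Since $\omega_1$ is definable, $\omega_1\in Y\cap Z$ and $\pi(\omega_1)=\omega_1$; combined with $Y\cap\alpha=Z\cap\beta$ and $\omega_1\leq\alpha,\beta$ this yields $\delta:=Y\cap\omega_1=Z\cap\omega_1$. For any $x\in\HH{\omega_1}\cap Y$, elementarity of $Y$ gives $\tc{x}\subseteq Y$ and hence $\tc{x}\cap\omega_1\subseteq\delta$, so an $\in$-induction shows $\pi(x)=x$. Thus $\HH{\omega_1}\cap Y=\HH{\omega_1}\cap Z$, these two sets have a common Mostowski collapse, call it $M$, and one has $\mathscr{C}_p\subseteq M$, $a_p=a_{\pi(p)}$, $\mathscr{C}_p=\mathscr{C}_{\pi(p)}$, and $t_{\pi(p)}(M_0)=\pi[t_p(M_0)]$ for every $M_0\in\mathscr{C}_p$. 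Define
\[
q ~ = ~ \bigl\langle a_p,\ \mathscr{F}_p\cup\pi[\mathscr{F}_p],\ t_q\bigr\rangle
\]
with $\mathscr{C}_q=\mathscr{C}_p\cup\{M\}$, $t_q(M_0)=t_p(M_0)\cup\pi[t_p(M_0)]$ for $M_0\in\mathscr{C}_p$, and $t_q(M)=\{\HH{\omega_2}\cap Y,\HH{\omega_2}\cap Z\}$. The verification that $q\in\EEE$ with $q\leq_\EEE p,\pi(p)$ is routine: the new $\in$-chain clauses are witnessed by $\HH{\omega_2}\cap Y$ and $\HH{\omega_2}\cap Z$, and the new disagreement clauses follow from $p$ and $\pi(p)$ sharing the same $a$-component. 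Lemma \ref{lemma:GenericCondition} applied with $N=M$ then shows that $q$ is both $(Y,\EEE)$- and $(Z,\EEE)$-generic.

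For the filter-shift property, fix a generic $G\ni q$; by symmetry (via $\pi^{{-}1}$, using the symmetric data $q\leq_\EEE\pi(p),p$ and $(Z,\EEE)$-genericity of $q$) it suffices to show $\pi[G\cap Y]\subseteq G\cap Z$. For $r_0\in G\cap Y$, this reduces via a common extension $r\leq_\EEE r_0,q$ in $G$ to showing that every $r\leq_\EEE q,r_0$ admits a common extension with $\pi(r_0)$. I build such an $r'$ by keeping $a_{r'}=a_r$, setting $\mathscr{F}_{r'}=\mathscr{F}_r\cup\pi[\mathscr{F}_{r_0}]$, and augmenting $t_r(M_0)$ with $\pi[t_{r_0}(M_0)]$ for each $M_0\in\mathscr{C}_{r_0}$. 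The crucial check is the disagreement clause: for $\gamma\in d_r\setminus d_{r_0}$ and $f=\pi(f_0)\in\pi[\mathscr{F}_{r_0}]$, either $\gamma\notin Z$, and then $a_r(\gamma)\neq f(\gamma)$ follows from $r$'s disagreement clause at $\HH{\omega_2}\cap Z\in t_r(M)$; or $\gamma\in Z\cap\omega_1\subseteq\delta$, forcing $\pi(\gamma)=\gamma$ and $f(\gamma)=\pi(f_0(\gamma))=f_0(\gamma)$ (since $f_0(\gamma)\in Y\cap\omega_1=\delta$), whence $a_r(\gamma)\neq f_0(\gamma)$ by $r\leq_\EEE r_0$. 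The anticipated main obstacle is maintaining the $\in$-chain clause for $t_{r'}$ at intermediate levels $M_1\in\mathscr{C}_r\setminus\mathscr{C}_{r_0}$ lying below $M$: whenever $\pi(X_0)\in\pi[t_{r_0}(M_0)]$ must be contained in a witness at level $M_1$, one produces such a witness as the $\pi$-image of a $Y$-side witness $X_1\in t_r(M_1)$ (which must lie in $\HH{\omega_2}\cap Y$, as otherwise the $\in$-chain at $(M_1,M)$ combined with $X_0\in X_1$ would place $X_0$ in $\HH{\omega_2}\cap Z$, contradicting $X_0\in Y\setminus Z$) and adjoining $\pi(X_1)\in\HH{\omega_2}\cap Z$ to $t_{r'}(M_1)$; this bookkeeping, propagated across all chain levels, requires a case analysis parallel to the one carried out in the proof of Lemma \ref{lemma:GenericCondition}.
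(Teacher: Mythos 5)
Your skeleton matches the paper's: you amalgamate $p$ and $\pi(p)$ into a single condition whose matrix has a new top node carrying both $\HH{\omega_2}\cap Y$ and $\HH{\omega_2}\cap Z$, invoke Lemma \ref{lemma:GenericCondition} for simultaneous $(Y,\EEE)$- and $(Z,\EEE)$-genericity, and reduce the filter-shift property to the statement that every $r\leq_\EEE q,r_0$ with $r_0\in Y$ has a common extension with $\pi(r_0)$ (your symmetry shortcut for the reverse inclusion differs from the paper's density-plus-elementarity argument but is acceptable, since the facts you use are symmetric in $Y$ and $Z$). There are, however, two problems. The smaller one: your new node $M$ is the collapse of $\HH{\omega_1}\cap Y$, which is a transitive set of ordinal height $Y\cap\omega_1$ and is \emph{not} the transitive collapse of $\HH{\omega_2}\cap Y$; with this choice, clause (d) of the definition of $\EEE$ (each $X\in t_q(M)$ collapses to $M$) fails for $t_q(M)=\{\HH{\omega_2}\cap Y,\HH{\omega_2}\cap Z\}$, so $q$ is not a condition and Lemma \ref{lemma:GenericCondition} does not apply. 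The correct node is the common transitive collapse $N$ of $\HH{\omega_2}\cap Y$ and $\HH{\omega_2}\cap Z$, which coincide because $\pi$ restricts to an isomorphism between these two structures; this is a repairable slip.

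The serious gap is in the amalgamation step, which is the heart of the lemma. Your mechanism for restoring the $\in$-chain clause at levels $M_1\in\mathscr{C}_r\setminus\mathscr{C}_{r_0}$ rests on the claim that a witness $X_1\in t_r(M_1)$ with $X_0\in X_1$ ``must lie in $\HH{\omega_2}\cap Y$, as otherwise $X_0$ would land in $\HH{\omega_2}\cap Z$.'' This dichotomy is false: $r\leq_\EEE q$ only guarantees $\{\HH{\omega_2}\cap Y,\HH{\omega_2}\cap Z\}\subseteq t_r(N)$, and the extension from $q$ to $r$ may have added arbitrarily many further models to $t_r(N)$ (and further nodes to the chain). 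The upward witness for $X_1$ can lie in one of these other models, in which case $X_1$ need not be an element of $Y$ at all and $\pi(X_1)$ is simply undefined, so your propagation ``across all chain levels'' has nothing to propagate with. This is exactly the difficulty that the paper's proof resolves with the full matrix machinery: enumerate $t_r(N)=\{W_0,\ldots,W_{k-1}\}$, use the canonical isomorphisms $\pi_{i,j}$ (compositions of the transitive collapses) between \emph{all} of these models, define for each $X$ the set $I(X)$ of indices through which $X$ is reachable by a finite chain of witnesses, and close $t_r$ under the images $\pi_{i,j}(X)$ for $i\in I(X)$, $j<k$, afterwards re-verifying the chain clause via the witnessing chains and the clauses (f), (g) of the definition of $\EEE$ for all newly added models (your check only covers the $\leq_\EEE$-clause for $\mathscr{F}_{\pi(r_0)}$). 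Since this closure under the $\pi_{i,j}$ rather than under $\pi$ alone is the essential idea and it is absent from your sketch, the proposal does not yet constitute a proof of the lemma.
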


\begin{proof}
 In the following, pick $\theta$, $\lhd$, $\alpha$, $\beta$, $Y$, $Z$, $p$ and $\pi$ as in the definition of the $\aleph_2$-isomorphism condition. %
 Then  it is easy to see that $\pi(p)$ is again a condition in $\EEE$ with  $d_p=d_{\pi(p)}$, $a_p=a_{\pi(p)}$  and $\mathscr{C}_p=\mathscr{C}_{\pi(p)}$. 
 Let $t$ denote the unique function with domain $\mathscr{C}_p$ and $t(M)=t_p(M)\cup t_{\pi(p)}(M)$ for all $M\in\mathscr{C}_p$. 
 Then it is easy to see that the tuple $$q ~ = ~ \langle a_p, ~ \mathscr{F}_p\cup\mathscr{F}_{\pi(p)}, ~ t\rangle$$ is a condition in $\EEE$ below both $p$ and $\pi(p)$.

 Now, let $N$ denote the transitive collapse of $\HH{\omega_2}\cap Y$ and  define $$r ~ = ~ \langle a_q, ~ \mathscr{F}_q, ~ t_q\cup\{\langle N,\{\HH{\omega_2}\cap Y,\HH{\omega_2}\cap Z\}\rangle\}.$$
 Since our assumptions imply that $Y\cap\omega_1=Z\cap\omega_1$ and $\pi\restriction(Y\cap\omega_1)=\id_{Z\cap\omega_1}$, it follows that $r$ is a condition in $\EEE$ below $q$. Moreover, Lemma \ref{lemma:GenericCondition} directly implies that $r$ is both an $(Y,\EEE)$- and a $(Z,\EEE)$-generic condition.

 In the following, let $G$ be $\EEE$-generic over $\VV$ with $r\in G$. 
 Assume, towards a contradiction, that there is $s\in G\cap Y$ with $\pi(s)\notin G$. 
 Fix a condition $u$ in $\EEE$ below both $r$ and $s$. 
 Set $k=\betrag{t_u(N)}>1$ and pick sets $W_0,\ldots,W_{k-1}$ with $W_0=\HH{\omega_2}\cap Y$, $W_1=\HH{\omega_2}\cap Z$ and $t_u(N)=\{W_0,\ldots,W_{k-1}\}$. 
 For all $M\in\mathscr{C}_u\cap N$ and all $X\in t_u(M)$, we let $I(X)$ denote the set of all $i<k$ with the property that $X\in W_i$ and there exists a  finite $\in$-chain $C$ of elements of $W_i$ with the property that for all $M\in M^\prime\in C$, there exists $X\in X^\prime\in C\cap t_u(M^\prime)$. Then $I(X)\neq\emptyset$ for all $M\in\mathscr{C}_u\cap N$ and all $X\in t_u(M)$. 
 Next, given $i,j<k$, let $\map{\pi_{i,j}}{\langle W_i,\in\rangle}{\langle W_j,\in\rangle}$ denote the unique isomorphism between these structures. We then have  $\pi_{0,1}=\pi\restriction(W_0)$. %
 We now define $t_*$ to be the unique function with domain $\mathscr{C}_u$ such that $t_*(M)=t_u(M)$ holds for all $M\in\mathscr{C}_u\setminus N$ and $$t_*(M) ~ = ~ \Set{\pi_{i,j}(X)}{X\in t_u(M), ~ i\in I(X), ~ j<k}$$ for all $M\in\mathscr{C}_u\cap N$. Set $$v ~ = ~ \langle a_u, ~ \mathscr{F}_{\pi(s)}\cup\mathscr{F}_u, ~ t_*\rangle.$$

 \begin{claim*}
  $v$ is a condition in $\EEE$ below $u$. 
 \end{claim*}
 
 \begin{proof}[Proof of the Claim]
  First, if $M\in\mathscr{C}_u\cap N$, $X\in t_u(M)$, $i\in I(X)$ and $j<k$, then $X$ is an elementary submodel of $W_i$ and this allows us to conclude that $\pi_{i,j}(X)$ is a countable elementary submodel of $\HH{\omega_2}$ whose transitive collapse is equal to $M$. 
  
  Next, fix $M_0,M_1\in\mathscr{C}_u$ with $M_0\in M_1$ and $X_0\in t_*(M_0)$. 
  First, assume that $M_1\in N$. Then $M_0\in N$ and we can find $X\in t_u(M_0)$, $i\in I(X)$ and $j<k$ with $X_0=\pi_{i,j}(X)$. 
  Let $C$ be the $\in$-chain of elements of $W_i$ witnessing that $i\in I(X)$ and pick $X^\prime\in C$ with $X\in X^\prime\in C\cap t_u(M_1)$. Then $\Set{W\in C}{X^\prime\in W}$ is an $\in$-chain witnessing that $i\in I(X^\prime)$ and hence we have $X_0\in \pi_{i,j}(X^\prime)\in t_*(M_1)$. 
  Next, if $M_0,M_1\notin N$, then $X_0\in t_u(M_0)$ and there exists $X_1\in t_r(M_1)=t_*(M_1)$ with $X_0\in X_1$. 
  Finally, assume that  $M_0\in N$ and $M_1\notin N$. 
  Then there exits $j<k$ with $X_0\in W_j\in t_u(N)$. 
  Since $M_1$ and $N$ are both contained in $\mathscr{C}_u$, we then know that either $M_1=N$ or $N\in M_1$. 
  If $N\in M_1$, then we can find $X_1\in t_u(M_1)$ with $W_j\in X_1$ and we then also have $X_0\in X_1$. We can therefore conclude that, in all cases, there exists $X_1\in t_*(M_1)$ with $X_0\in X_1$. 
  
  We now fix $M\in\mathscr{C}_u$, $X\in t_*(M)$ and $\alpha\in d_u\cap X$. 
  If $M\notin N$, then $X\in t_u(M)$ and therefore $a_u(\alpha)\in X$. 
  Now, assume that $M\in N$. Pick $X^\prime\in t_u(M)$, $i\in I(X^\prime)$ and $j<k$ with $X=\pi_{i,j}(X^\prime)$. 
  Then $\alpha\in d_u\cap X^\prime$ and therefore $a_u(\alpha)\in X^\prime\cap\omega_1\subseteq X$. 
  
  Next, fix $M\in\mathscr{C}_u$, $X\in t_*(M)$, $\alpha\in d_u\setminus X$ and a function $\map{f}{\omega_1}{\omega_1}$ in $X$. 
  If $M\notin N$, then $X\in t_u(M)$ and $a_u(\alpha)\neq f(\alpha)$. 
  In the other case, if $M\in N$ and $X=\pi_{i,j}(X^\prime)$ for some $X^\prime\in t_u(M)$, $i\in I(X^\prime)$ and $j<k$, then $\alpha\in d_u\setminus X^\prime$ and therefore $a_u(\alpha)\neq\pi_{i,j}^{{-}1}(f)(\alpha)=f(\alpha)$. 
  
  The above computations show that $v$ is a condition in $\EEE$ with $a_u=a_v$, $\mathscr{F}_u\subseteq\mathscr{F}_v$ and $\mathscr{C}_u=\mathscr{C}_v$. 
  Since our construction ensures that  $t_u(M)\subseteq t_v(M)$ holds for all $M\in\mathscr{C}_u$, we can now conclude that $v\leq_\EEE u$ holds. 
 \end{proof}

  \begin{claim*}
   $v\leq_\EEE \pi(s)$. 
  \end{claim*}
  
  \begin{proof}[Proof of the Claim]
   The fact that $v\leq_\EEE s$ directly implies that  $d_{\pi(s)}=d_s\subseteq d_v$, $a_v\restriction d_{\pi(s)}=a_s=a_{\pi(s)}$ and $\mathscr{C}_{\pi(s)}=\mathscr{C}_s\subseteq\mathscr{C}_v$. 
   Moreover, the definition of $v$ ensures that $\mathscr{F}_{\pi(s)}\subseteq \mathscr{F}_v$. 
   In addition, the fact that $\pi_{0,1}=\pi\restriction W_0$ and $\mathscr{C}_s\subseteq\mathscr{C}_u\cap N$ directly implies that $$t_{\pi(s)}(M) ~ = ~ \pi[t_s(M)] ~ \subseteq ~  \pi[\Set{\pi_{0,1}(X)}{X\in t_u(M),  ~ X\in W_0}] ~ \subseteq ~ t_v(M)$$ holds for all $M\in\mathscr{C}_{\pi(s)}$. 
   Finally, fix $\alpha\in d_v\setminus d_{\pi(s)}$ and $f\in\mathscr{F}_{\pi(s)}$. 
   Then $\alpha\in d_v\setminus d_s$ and $\pi^{{-}1}(f)\in\mathscr{F}_s$. Since this allows us to conclude that $$a_{\pi(s)}(\alpha) ~ = ~ a_s(\alpha) ~ \neq ~  \pi^{{-}1}(f)(\alpha) ~ = ~ f(\alpha),$$ the statement of the claim follows. 
  \end{proof}

  A density argument now shows that there is a condition $v$ in $G$ that is stronger than $\pi(s)$, contradicting our assumption. 
  This shows that $\pi[G\cap Y]\subseteq G$. 
  
  Finally, assume, towards a contradiction, that there is $u\in G\cap Z$ with the property that $\pi^{{-}1}(u)\notin G$. 
  Let $D$ denote the set of all conditions in $\EEE$ that are either stronger than $\pi^{{-}1}(u)$ or incompatible with $\pi^{{-}1}(u)$. 
  Then $D$ is a dense subset of $\EEE$ that is contained in $Y$. Since $r\in G$, we can find $v\in D\cap G\cap Y$. 
  In this situation, the above computations show that $\pi(v)\in G$ and elementarity implies that the element $u$ and $\pi(v)$ are incompatible in $\EEE$, a contradiction.  
 \end{proof}

 The statements of the following proposition can be proven in the same way as the corresponding results for the partial order $\DDD$ in Section \ref{diagonal}. The details are left to the reader.

\begin{proposition}\label{proposition:PropertiesOfEEE}
 Let $G$ be $\EEE$-generic over $\VV$ and set $g=\bigcup\Set{a_p}{p\in G}$. 
 \begin{enumerate}
     \item The set $g$ is a function from $\omega_1$ to $\omega_1$.
     
     \item For every sequence $\seq{f_\alpha}{\alpha<\omega_1}$ of functions from $\omega_1$ to $\omega_1$ in $\VV$, every monotone map $\map{m}{[\omega_1]^{{<}\omega}}{[\omega_1]^{{<}\omega}}$ in $\VV$ and every $a\in[\omega_1]^{{<}\omega}$, there exists $a\subseteq b\in[\omega_1]^{{<}\omega}$ with the property that   $$\Set{\beta<\omega_1}{f_\alpha(\beta)=g(\beta)} ~ \subseteq ~ m(b)$$ holds for all $\alpha\in  m(b)$. 
     
     \item Given  a finite subset $F$ of $\omega_1$,  the tuple $$q_F ~ = ~ \langle\emptyset, ~ \Set{c_\beta}{\beta\in F}, ~ \emptyset\rangle$$ is a condition in $\EEE$ iin $\VV$ and, if $q_F\in G$, then $\ran{g}\cap F=\emptyset$. \qed 
 \end{enumerate}
\end{proposition}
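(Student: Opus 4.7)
The plan is to mirror the proofs from Section \ref{diagonal} for $\EEE$ in place of $\DDD$, carrying the matrix component $t_p$ along essentially unchanged in each extension step. I would first establish direct analogs of Propositions \ref{proposition:TotalFunction} and \ref{proposition:AddFunctions} for $\EEE$, and then reuse the density argument of Theorem \ref{main1} to derive the three stated conclusions.

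First I would prove the analog of Proposition \ref{proposition:TotalFunction}: for every $q \in \EEE$ and every $\alpha < \omega_1$, there is $p \leq_\EEE q$ with $d_p = d_q \cup \{\alpha\}$, $\mathscr{F}_p = \mathscr{F}_q$ and $t_p = t_q$. One needs a value $\beta = a_p(\alpha)$ satisfying $\beta \in X$ for every $X$ in $\bigcup_{M \in \mathscr{C}_q} t_q(M)$ with $\alpha \in X$ and $\beta \neq f(\alpha)$ for every function $\map{f}{\omega_1}{\omega_1}$ in any such $X$ with $\alpha \notin X$. If some $X$ contains $\alpha$, let $Y \in t_q(M)$ be $\in$-minimal among such submodels; otherwise one argues with $\omega_1$ in place of $Y \cap \omega_1$. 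Using the $\in$-chain structure of $\mathscr{C}_q$, the coherence clause, and the fact that all submodels in $t_q(M)$ share the same intersection with $\omega_1$, one checks that every $X$ with $\alpha \notin X$ lies in some $Z \in t_q(M)$ which is isomorphic to $Y$ via the collapse map, and that the countable set $\Set{f(\alpha)}{f \in X,\ \map{f}{\omega_1}{\omega_1}}$ corresponds under this isomorphism to a set definable in $Y$ and hence has supremum strictly below $Y \cap \omega_1$. This leaves room for $\beta < Y \cap \omega_1$ above the finitely many such suprema and different from the finitely many $f(\alpha)$ with $f \in \mathscr{F}_q$. The analog of Proposition \ref{proposition:AddFunctions} is trivial, since $\langle a_q, \mathscr{F}_q \cup \{f\}, t_q\rangle$ is always a condition in $\EEE$ below $q$.

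With these tools, (1) follows because functionality of $g$ is ensured by compatibility of any two conditions in the filter $G$, and $D_\alpha = \Set{p \in \EEE}{\alpha \in d_p}$ is dense by the first analog. Statement (2) is obtained by duplicating the main density argument in the proof of Theorem \ref{main1}: for each $d \in [\omega_1]^{{<}\omega}$, let $E_d$ denote the set of $p \in \EEE$ for which there exists $d \subseteq e \in [\omega_1]^{{<}\omega}$ with $d_p = m(e)$ and $f_\alpha \in \mathscr{F}_p$ for all $\alpha \in m(e)$; the two analogs show that $E_d$ is dense, and a further application of the analog of Proposition \ref{proposition:TotalFunction} to produce, for any $\beta \in \omega_1 \setminus m(e)$, a condition $p \in G$ below a chosen $q \in E_d \cap G$ with $\beta \in d_p$, combined with the defining clause of $\leq_\EEE$ and the fact that $f_\alpha \in \mathscr{F}_q$, yields $g(\beta) = a_p(\beta) \neq f_\alpha(\beta)$ for every $\alpha \in m(e)$; hence $b = e$ works. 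Finally, (3) is a direct translation of the corresponding claim about $p_F$ in the proof of Theorem \ref{main1}: $q_F$ trivially satisfies every clause of the definition of $\EEE$ since $d_{q_F}$ and $\mathscr{C}_{q_F}$ are empty, and if $q_F \in G$, then for any $\alpha < \omega_1$ there is $p \in G$ below $q_F$ with $\alpha \in d_p$, which forces $g(\alpha) = a_p(\alpha) \neq c_\beta(\alpha) = \beta$ for every $\beta \in F$.

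The main obstacle is the analog of Proposition \ref{proposition:TotalFunction}. In $\DDD$, the linear $\in$-chain $\mathscr{X}_q$ immediately places every lower submodel inside the chosen $Y$, and countability inside $Y$ bounds the forbidden values below $Y \cap \omega_1$. In $\EEE$, the submodels at a fixed level $M$ are pairwise incomparable, so one has to transfer the countable set of forbidden values in a sibling $Y^\prime$ of $Y$ into $Y$ via the collapse isomorphism (which pointwise fixes all ordinals below $Y^\prime \cap \omega_1 = Y \cap \omega_1$) and then exploit the definability of this transferred set inside $Y$ to obtain the desired supremum bound.
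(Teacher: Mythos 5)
Your proposal is correct and follows exactly the route the paper intends: the paper's proof of Proposition \ref{proposition:PropertiesOfEEE} is simply the remark that the arguments for $\DDD$ in Section \ref{diagonal} carry over, and you supply precisely those details, including the only nontrivial adaptation, namely the matrix analog of Proposition \ref{proposition:TotalFunction}, where your use of the coherence clause, the common value of $X\cap\omega_1$ for all $X\in t_q(M)$, and the collapse isomorphisms to bound the forbidden values below the $\omega_1$-intersection of the least level containing $\alpha$ is the right argument (read your ``$\in$-minimal $Y$'' as a model at that least level of $\mathscr{C}_q$). The density arguments for statements (1)--(3) then duplicate those in Theorem \ref{main1}, as you say.
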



Before we give the proof of the  main result of this section, Theorem \ref{theorem:ConZFC}, we state the following direct consequence of {\cite[Section VIII, Lemma 2.4]{MR675955}} and the iteration theorem for proper forcings that is used in our construction.

\begin{lemma} \label{lemma:shelah}
 Let $$\langle\seq{\vec{\PPP}_{{<}\gamma}}{\gamma\leq\omega_2}, ~ \seq{\dot{\PPP}_\gamma}{\gamma<\omega_2} \rangle$$ denote a forcing iteration with countable support. 
 If $\CH$ holds and $$\mathbbm{1}_{\vec{\PPP}_{{<}\gamma}}\Vdash\anf{\textit{$\dot{\PPP}_\gamma$ is proper and satisfies the $\aleph_2$-isomorphism condition}}$$ holds for all $\gamma<\omega_2$, then $\vec{\PPP}_{{<}\omega_2}$ satisfies the $\aleph_2$-chain condition.  \qed
\end{lemma}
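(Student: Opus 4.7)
The plan is to derive the lemma in two stages: first use the cited preservation result \cite[VIII, Lemma 2.4]{MR675955} to propagate the $\aleph_2$-isomorphism condition through the countable support iteration, and then use a $\Delta$-system argument on countable elementary submodels, made possible by $\CH$, to convert the $\aleph_2$-isomorphism condition at stage $\omega_2$ into the $\aleph_2$-chain condition.

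For the first stage, I would apply the standard iteration theorem for countable support iterations of proper forcings, together with the hypothesis that each factor $\dot{\PPP}_\gamma$ is forced to be proper, to conclude that $\vec{\PPP}_{{<}\omega_2}$ itself is proper. I would then invoke \cite[VIII, Lemma 2.4]{MR675955}, whose content is precisely that the $\aleph_2$-isomorphism condition is preserved by countable support iterations of proper forcings that are each forced to satisfy this condition, provided $\CH$ holds in the ground model. The outcome is that $\vec{\PPP}_{{<}\omega_2}$ itself satisfies the $\aleph_2$-isomorphism condition.

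For the second stage, suppose for contradiction that $\seq{p_\alpha}{\alpha<\omega_2}$ is an antichain in $\vec{\PPP}_{{<}\omega_2}$. Fix a sufficiently large regular $\theta$ and a well-ordering $\lhd$ of $\HH{\theta}$, and for each $\alpha<\omega_2$ choose a countable elementary submodel $Y_\alpha$ of $\langle\HH{\theta},\in,\lhd\rangle$ containing $\vec{\PPP}_{{<}\omega_2}$, $p_\alpha$, and $\alpha$. Since $\CH$ gives $2^{\aleph_0}=\aleph_1$, there are at most $\aleph_1$ many isomorphism types of the pointed structure $\langle Y_\alpha;\vec{\PPP}_{{<}\omega_2},p_\alpha,\alpha\rangle$; combining a pigeonhole on these types with Fodor's lemma applied to the ordinal $\sup(Y_\alpha\cap\alpha)$ and with a further thinning to ensure $Y_\alpha\cap\omega_2\subseteq\beta$ whenever $\alpha<\beta$ are selected, I would extract a stationary $S\subseteq\omega_2$ on which all $Y_\alpha$ share a common root $R=Y_\alpha\cap\alpha$ and a common pointed isomorphism type. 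For any $\alpha<\beta$ in $S$, the unique $\lhd$-least isomorphism $\map{\pi}{\langle Y_\alpha,\in\rangle}{\langle Y_\beta,\in\rangle}$ then satisfies $\pi(\alpha)=\beta$, $\pi(p_\alpha)=p_\beta$ and $\pi\restriction(Y_\alpha\cap Y_\beta)=\id_{Y_\alpha\cap Y_\beta}$, so the $\aleph_2$-isomorphism condition produces a common lower bound of $p_\alpha$ and $p_\beta$, contradicting incompatibility.

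The main obstacle is the coordination of the $\Delta$-system thinning with the isomorphism-type pigeonhole. In particular, the requirement that the isomorphism $\pi$ between members of the $\Delta$-system be the identity on $Y_\alpha\cap Y_\beta$ rather than merely on $R$ must be derived from the fact that the $\lhd$-least isomorphism between two submodels of the same pointed type fixes any element common to both, which requires some care to verify. Once these bookkeeping issues are handled, the remainder of the argument is a direct unpacking of the $\aleph_2$-isomorphism condition, so the overall proof reduces to citing \cite[VIII, Lemma 2.4]{MR675955} and the proper iteration theorem, as stated.
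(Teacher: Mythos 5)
The overall decomposition is reasonable, and your first stage (proper iteration theorem plus Shelah's preservation lemma) is exactly what the paper does --- the paper offers no further argument and simply quotes {\cite[Section VIII, Lemma 2.4]{MR675955}}, which already contains the $\aleph_2$-chain condition conclusion. The genuine gap is in your second stage, where you re-derive \anf{isomorphism condition $+$ $\CH$ implies the $\aleph_2$-chain condition}. The step that fails is the asserted fact that the (unique) isomorphism $\map{\pi}{Y_\alpha}{Y_\beta}$ between two countable elementary submodels of the same pointed isomorphism type fixes every element of $Y_\alpha\cap Y_\beta$. Since $\pi=\sigma_\beta^{-1}\circ\sigma_\alpha$ is the composition of the transitive collapses, a common element $x$ is fixed exactly when $\sigma_\alpha(x)=\sigma_\beta(x)$. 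Equality of pointed types, even anchored pointwise on the ordinal root $R=Y_\alpha\cap\alpha$, guarantees this only for elements of the definable hull of the anchored parameters, for hereditarily countable common elements, and (once $Y_\alpha\cap\omega_1=Y_\beta\cap\omega_1$) for common functions from $\omega_1$ to $\omega_1$. A common non-definable ordinal $\gamma\geq\omega_2$ is fixed only if $\otp{Y_\alpha\cap\gamma}=\otp{Y_\beta\cap\gamma}$, and a common $\vec{\PPP}_{{<}\omega_2}$-name or other object of hereditary size $\geq\aleph_1$ only if the two collapses happen to agree on it; nothing in your thinning (type pigeonhole, Fodor on $\sup(Y_\alpha\cap\alpha)$, ordering so that $Y_\alpha\cap\omega_2\subseteq\beta$) enforces this. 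Since conditions of a countable support iteration of length $\omega_2$ and their names are exactly such large objects, the hypothesis $\pi\restriction(Y_\alpha\cap Y_\beta)=\id_{Y_\alpha\cap Y_\beta}$ of the $\aleph_2$-isomorphism condition is not verified by your argument, and it is the delicate point rather than bookkeeping.

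The missing idea is to control the full intersections of the models, not just their ordinal parts. Under $\CH$ one has $\aleph_1^{\aleph_0}=\aleph_1$, so the generalized $\Delta$-system lemma applies to the family $\Set{Y_\alpha}{\alpha<\omega_2}$ of countable subsets of $\HH{\theta}$ and yields $\aleph_2$-many indices with $Y_\alpha\cap Y_\beta=R^*$ for a fixed countable set $R^*$; one then pigeonholes on the transitive collapse \emph{together with} the map $\sigma_\alpha\restriction R^*$ and the positions of $\alpha$ and $p_\alpha$ (again at most $\aleph_1$ possibilities under $\CH$), so that for any two surviving indices the composed isomorphism is the identity on the entire intersection, and one combines this with your Fodor/pigeonhole step fixing $Y_\alpha\cap\alpha$ and with choosing $\beta$ above $\sup(Y_\alpha\cap\omega_2)$. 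With that replacement (which is essentially the content of Shelah's own proof of the quoted lemma) your stage-two argument goes through; alternatively, you can simply quote the $\aleph_2$-chain condition conclusion from {\cite[Section VIII, \S 2]{MR675955}} as the paper does.
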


We are now ready to give the proof of Theorem \ref{theorem:ConZFC}.
\begin{proof}[Proof of Theorem \ref{theorem:ConZFC}]  
 Assume that $\CH$ holds and fix an enumeration $\seq{F_\gamma}{\gamma<\omega_2}$ of all finite subsets of $\omega_1$ with the property that every such subset is enumerated unboundedly often in $\omega_2$. 
 Let $$\langle\seq{\vec{\PPP}_{{<}\gamma}}{\gamma\leq\omega_2}, ~ \seq{\dot{\PPP}_\gamma}{\gamma<\omega_2} \rangle$$ denote a forcing iteration with countable support with the property that for all $\gamma<\omega_2$, if $G$ is $\vec{\PPP}_{{<}\gamma}$-generic over $\VV$, then $\dot{\PPP}_\gamma^G$ is equal to the suborder of $\EEE^{\VV[G]}$ consisting of all conditions below $q_{F_\gamma}$. 
 Then Corollary \ref{corollary:Proper} implies that $\vec{\PPP}_{{<}\omega_2}$ is proper. 
 In addition, Lemma \ref{lemma:IsoCondition} allows us to apply Lemma  \ref{lemma:shelah} to show that $\vec{\PPP}_{{<}\omega_2}$ satisfies the $\aleph_2$-chain condition. 
 These arguments show that forcing with $\vec{\PPP}_{{<}\omega_2}$ preserves both $\omega_1$ and $\omega_2$. 
 In addition, we know that every subset of $\omega_1$ in a $\vec{\PPP}_{{<}\omega_2}$-generic extension is contained in a proper intermediate extension of the iteration. 
 
 Now, let $G$ be $\vec{\PPP}_{{<}\omega_2}$-generic over $\VV$ and, in $\VV[G]$, fix a sequence $\vec{f}=\seq{f_\alpha}{\alpha<\omega_1}$ of functions from $\omega_1$ to $\omega_1$, a finite subset $F$ of $\omega_1$ and some monotone map $\map{m}{[\omega_1]^{{<}\omega}}{[\omega_1]^{{<}\omega}}$. 
 By the definition of our iteration and the above remarks, there exists $\gamma<\omega_2$ with the property that, if $\bar{G}$ denotes the filter on $\vec{\PPP}_{{<}\gamma}$ induced by $G$, then $F_\gamma=F$ and $\vec{f},m\in\VV[\bar{G}]$. 
 Let $G_\gamma$ be the filter on $\dot{\PPP}_\gamma$ induced by $G$ and set $g=\bigcup\Set{a_p}{p\in G_\gamma}$. 
 Then Proposition \ref{proposition:PropertiesOfEEE} shows that $g$ is a function from $\omega_1$ to $\omega_1$ with the property that $F\cap\ran{g}=\emptyset$ and, for all $a\in[\omega_1]^{{<}\omega}$, there exists $a\subseteq b\in[\omega_1]^{{<}\omega}$ with $\Set{\beta<\omega_1}{f_\alpha(\beta)=g(\beta)}\subseteq m(b)$ for all $\alpha\in m(b)$. 
 This shows that $(\diag)$ holds in $\VV[G]$. 
\end{proof}


\section{The coloring}\label{coloring}

 We now use the principle $(\diag)$ to construct an $(\M,\N)$-full structure of cardinality $\aleph_2$. 
 This implication is an immediate consequence of the next result.

\begin{theorem}\label{main2}
 Assume that $(\diag)$ holds. Then there exists 
 \begin{itemize}
     \item a map $\map{c}{[\omega_2]^2}{\omega_1}$, 
     
     \item a monotone map $\map{m}{[\omega_2]^{{<}\omega}}{[\omega_2]^{{<}\omega}}$, and 
     
     \item a map $\map{r}{\omega_2}{\omega_2}$
 \end{itemize} 
 such that the following statements hold: 
 \begin{enumerate}
  \item If $a\in[\omega_2]^{{<}\omega}$ and $\alpha,\beta\in m(a)$ with $\alpha\neq\beta$, then $\calA^c_{\alpha,\beta}\subseteq m(a)$. 
  
  \item\label{item:FinExt} Given 
   \begin{itemize}
       \item a finite subset $d$ of $\omega_2+\omega$, 
       
       \item a function $\map{e}{[d]^2}{\omega_1}$, and 
       
       \item  a function $\map{s}{d}{\omega_2}$ 
   \end{itemize}     
   such that
   \begin{itemize}
       \item $c\restriction[d\cap\omega_2]^2=e\restriction[d\cap\omega_2]^2$, 
       
       \item $r\restriction d=s$,  and  
       
       \item $\calA^e_{\alpha,\beta} 
       \subseteq \omega_2$ for all $\alpha,\beta\in d\cap\omega_2$ with $\alpha\neq\beta$, 
   \end{itemize}    
   there exists an injection $\map{\iota}{d}{\omega_2}$ with \begin{itemize}
       \item $\iota\restriction(d\cap\omega_2)=\id_{d\cap\omega_2}$, 
       
       \item $c(\iota(\alpha),\iota(\beta)) =  e(\alpha,\beta)$ for all $\alpha,\beta\in d$ with $\alpha\neq\beta$, and 
       \item $r(\iota(\alpha))=s(\alpha)$ for all $\alpha\in d$. 
   \end{itemize}
 \end{enumerate}
\end{theorem}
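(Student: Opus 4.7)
The plan is to construct $c$, $r$, and $m$ by transfinite recursion of length $\omega_2$, with a single application of $(\diag)$ at each stage. I will view $c$ through its columns $f_\gamma\colon\gamma\to\omega_1$ defined by $f_\gamma(\alpha)=c(\alpha,\gamma)$ for $\alpha<\gamma$. Before starting, I fix for each $\gamma$ with $\omega_1\leq\gamma<\omega_2$ a bijection $b_\gamma\colon\omega_1\to\gamma$, and set up a bookkeeping enumeration of length $\omega_2$ that lists, cofinally often, every finite pattern $(d,e,s)$ of the form demanded by (2) together with a designated element $i\in d\setminus\omega_2$ that is to be realized at the current stage.

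At stage $\gamma$, to define $f_\gamma$ and $r(\gamma)$, I would feed to $(\diag)$ an $\omega_1$-indexed sequence assembled from three families of functions $\omega_1\to\omega_1$ (all pulled back through $b_\gamma$): (i) $\tilde{f}_\delta(\xi)=f_\delta(b_\gamma(\xi))$ when $b_\gamma(\xi)<\delta$, for $\delta<\gamma$, which controls the below-$\delta$ part of $\calA^c_{\delta,\gamma}$; (ii) $\tilde{p}_\alpha(\xi)=f_{b_\gamma(\xi)}(\alpha)$ when $\alpha<b_\gamma(\xi)<\gamma$, for $\alpha<\gamma$, which controls the middle part of $\calA^c_{\alpha,\gamma}$, namely the points $\alpha<z<\gamma$ with $f_z(\alpha)=f_\gamma(z)$; and (iii) all constant functions $c_v$ with $v<\omega_1$, which forces $f_\gamma$ to have finite fibers. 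The monotone map supplied to $(\diag)$ is built from the partial $m$ constructed so far together with the $b_\gamma$-image of whatever finite sets the current stage of the bookkeeping needs to absorb, and the finite forbidden set $F$ encodes the values the bookkeeping wishes $f_\gamma$ to avoid. From $(\diag)$ I obtain $g\colon\omega_1\to\omega_1$; I set $f_\gamma=g\circ b_\gamma^{-1}$ up to finitely many overrides that implement the scheduled pattern-realization, and set $r(\gamma)$ to the value the bookkeeping prescribes.

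Once $c$ and $r$ are fixed, $m$ on $[\omega_2]^{<\omega}$ is assembled as the closure of all stage-by-stage data. For property (1), I would decompose $\calA^c_{\alpha,\beta}$ (for $\alpha<\beta$ both in $m(a)$) into its $z<\alpha$, $\alpha<z<\beta$, and $z>\beta$ portions; the first two portions are controlled by clauses (i) and (ii) of the $(\diag)$-application at stage $\beta$, and the $b_\beta$-image of the monotone closure fed to $(\diag)$ there is exactly what gets absorbed into $m(a)$. For property (2), given a pattern $(d,e,s)$ satisfying the stated hypotheses and writing $d\setminus\omega_2=\{\xi_0,\dotsc,\xi_{k-1}\}$, I would use the bookkeeping to pick stages $\gamma_0<\dotsb<\gamma_{k-1}$ dedicated to realizing the respective $\xi_j$: the overrides at stage $\gamma_j$ will force $f_{\gamma_j}(\alpha)=e(\alpha,\xi_j)$ for $\alpha\in d\cap\omega_2$, $f_{\gamma_j}(\gamma_i)=e(\xi_i,\xi_j)$ for $i<j$, and $r(\gamma_j)=s(\xi_j)$. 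The injection $\iota(\xi_j)=\gamma_j$ then witnesses (2), with the no-new-agreements hypothesis preventing these overrides from accidentally enlarging some other $\calA^c_{\alpha',\beta'}$ beyond what $m(a)$ can absorb.

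The main obstacle will be the $z>\beta$ portion of $\calA^c_{\alpha,\beta}$: the condition $f_z(\alpha)=f_z(\beta)$ is decided only at the later stage $z$ and is not seen by the $(\diag)$-application at stage $\beta$. Clause (iii) at stage $z$ ensures that $f_z$ has finite fibers, which is the right first step, but it does not by itself bound the set of such $z$ for a fixed pair. The delicate part will be arranging the bijections $\{b_\gamma\}$ together with the bookkeeping's expansion of $m$ so that, for each fixed pair $(\alpha,\beta)$, only finitely many $z>\beta$ place $\alpha$ and $\beta$ in a common fiber of $f_z$, and so that these $z$ are captured in $m(a)$ whenever $\alpha,\beta\in m(a)$; balancing this against the simultaneous demands of clauses (i), (ii), the bookkeeping overrides, and the finite forbidden set $F$ is the technical heart of the proof.
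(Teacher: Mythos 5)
There is a genuine gap, and it is exactly the one you flag at the end: your construction never establishes that, for a fixed pair $\alpha<\beta$, only finitely many later ordinals $z>\beta$ satisfy $c(\alpha,z)=c(\beta,z)$. This is not a technical detail to be balanced later; without it the whole theorem fails, because property (1) applied to $a=\{\alpha,\beta\}$ forces $\calA^c_{\alpha,\beta}\subseteq m(\{\alpha,\beta\})$, a \emph{finite} set, so no amount of bookkeeping or choice of the bijections $b_\gamma$ can ``absorb'' later agreements -- they must simply not occur (beyond a bounded initial segment). Your clause (iii) (adding all constant functions to the $(\diag)$-input) only makes each new column finite-to-one, which says nothing about two old ordinals landing in a common fiber of column after column; a priori $\aleph_2$-many later columns could agree on $\{\alpha,\beta\}$, and $(\diag)$, being a statement about a single stage, gives you no leverage across stages.

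The paper closes this hole by a coding device that your proposal lacks: at a new ordinal $\alpha$ it fixes a surjection $\map{s_\alpha}{\omega_1}{\alpha}$ and sets $c(\alpha,\beta)=\goedel{g(\beta)}{\min(s_\alpha^{-1}\{\beta\})}$, where $g$ is the $(\diag)$-generic function. Since the second Gödel coordinate recovers $\beta$, the new column $\beta\mapsto c(\alpha,\beta)$ is \emph{injective}, so a freshly added ordinal can never witness an agreement between two previously existing ordinals; at the finitely many pattern-realization stages, the forbidden set $F=\ran{p_0\circ e}$ together with the hypothesis $\calA^e_{\alpha,\beta}\subseteq\omega_2$ keeps the prescribed pattern colors from colliding with the generic ones. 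This yields $\calA^c_{\alpha,\beta}\subseteq\alpha_\zeta$ whenever $\beta<\alpha<\alpha_\zeta$, i.e.\ the ``$z>\beta$ portion'' is empty beyond the stage where both ordinals appear, and only then does the first Gödel coordinate (on which $(\diag)$ diagonalizes, exactly in the spirit of your clauses (i) and (ii)) take care of the remaining, below-$\alpha$ agreements and their capture by $m$. Your stage-by-stage architecture, the bookkeeping of patterns via designated stages $\gamma_0<\dots<\gamma_{k-1}$, and the use of $F$ are all in line with the paper, but without the injective-column coding (or an equivalent mechanism) the argument does not go through.
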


Before we present the proof of the above theorem, we briefly show how it can be applied to prove the desired independence result.

\begin{corollary} Assume that  $(\diag)$ holds and let $\M$ be a countable model that characterizes $\aleph_1$.  Then there exists  an $(\M,\N)$-full structure of cardinality  $\aleph_2$. 
\end{corollary}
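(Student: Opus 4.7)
The plan is to combine Theorem \ref{main2} with Lemma \ref{EquivalentLemma}: apply Theorem \ref{main2} under the hypothesis $(\diag)$ to obtain maps $\map{c}{[\omega_2]^2}{\omega_1}$, $\map{m}{[\omega_2]^{{<}\omega}}{[\omega_2]^{{<}\omega}}$ and $\map{r}{\omega_2}{\omega_2}$, and then verify that this pair $(c,r)$ satisfies all four conditions in item (2) of Lemma \ref{EquivalentLemma} with $\alpha=1$. By the equivalence provided by that lemma, an $(\M,\N)$-full structure of cardinality $\aleph_2$ then exists.

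The first two clauses of Lemma \ref{EquivalentLemma} fall out almost immediately from clause (1) of Theorem \ref{main2}. For finite agreement, given $\beta<\gamma<\omega_2$, I would set $a=\{\beta,\gamma\}\subseteq m(a)$ and conclude $\calA^c_{\beta,\gamma}\subseteq m(a)$, so $\calA^c_{\beta,\gamma}$ is finite. For finite closure, given $a\in[\omega_2]^{{<}\omega}$ I would take $b=m(a)$: monotonicity of $m$ gives $a\subseteq b$, and clause (1) of Theorem \ref{main2} says exactly that $\calA^c_{\alpha,\beta}\subseteq m(a)=b$ for every pair of distinct $\alpha,\beta\in b$.

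The finite extension and colouring clauses both reduce to clause (2) of Theorem \ref{main2}, modulo a small matching step. In Lemma \ref{EquivalentLemma} the set $d$ is an arbitrary finite set, whereas Theorem \ref{main2} demands $d\subseteq\omega_2+\omega$; since $d\setminus\omega_2$ is finite I would relabel its elements by a bijection onto a subset of $[\omega_2,\omega_2+\omega)$ and transport $e$ (and, when applicable, $s$) along this bijection. The key point I would then make is that the hypothesis \emph{$e$ introduces no new agreement over $c\restriction[d\cap\omega_2]^2$} is \emph{equivalent} to the side condition $\calA^e_{\alpha,\beta}\subseteq\omega_2$ appearing in Theorem \ref{main2}(\ref{item:FinExt}): since $e$ already extends $c\restriction[d\cap\omega_2]^2$, the agreement sets automatically coincide inside $d\cap\omega_2$, and the \emph{only} way to introduce a new agreement is through some $z\in d\setminus\omega_2$. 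This equivalence is the one place the argument is not completely mechanical, but it is essentially a definitional check.

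Once this translation is in place, Theorem \ref{main2}(\ref{item:FinExt}) produces the required injection $\iota$ for the colouring clause directly. For the uncoloured clause of Lemma \ref{EquivalentLemma}, I would first extend $r\restriction(d\cap\omega_2)$ to a completely arbitrary function $\map{s}{d}{\omega_2}$, apply the colouring clause just verified to obtain $\iota$, and discard the additional information about $r\circ\iota$. Assembling these verifications, Lemma \ref{EquivalentLemma} produces the desired $(\M,\N)$-full structure of cardinality $\aleph_2$, completing the proof.
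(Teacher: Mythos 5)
Your proposal is correct and follows essentially the same route as the paper: obtain $c$, $m$, $r$ from Theorem \ref{main2}, check the four clauses of Lemma \ref{EquivalentLemma} (with $m$ witnessing finite agreement and finite closure, and clause \eqref{item:FinExt} of Theorem \ref{main2} giving finite extension and colouring after noting that ``no new agreements'' yields $\calA^e_{\alpha,\beta}\subseteq\omega_2$), and then invoke the lemma. Your extra observations -- the explicit relabelling of $d$ into $\omega_2+\omega$ and the fact that the agreement conditions are actually equivalent rather than one-directional -- are correct refinements of steps the paper treats as ``without loss of generality'' or uses only in the needed direction.
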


\begin{proof}
 Let $c$, $m$ and $r$ be the functions given by Theorem \ref{main2}. 
 Then the function $m$ directly witnesses that the function $c$ possesses the properties \eqref{item:FiniteAgree} and \eqref{item:FiniteClosure} listed in Lemma \ref{EquivalentLemma}. 
 Now, fix a finite set $d$, a function  $\map{e}{[d]^2}{\omega_{\alpha}}$ with $e\restriction[d\cap\omega_2]^2=c\restriction[d\cap\omega_2]^2$ that introduces no new agreements over $c\restriction[d\cap\omega_2]^2$ and a function $\map{s}{d}{\omega_2}$ with $s\restriction(d\cap\omega_2)=r\restriction(d\cap\omega_2)$.  Without loss of generality, we may assume that $d$ is a subset of $\omega_2+\omega$. 
 If $\alpha,\beta\in d\cap\omega_2$ with $\alpha\neq\beta$, then the fact that $e$ introduces no new agreements over $c\restriction[d\cap\omega_2]^2$ implies that $\calA^e_{\alpha,\beta}=\calA^{c\restriction[d\cap\omega_2]^2}_{\alpha,\beta}\subseteq\omega_2$. %
 This allows us to use conclusion  \eqref{item:FinExt} of Theorem \ref{main2} to find an injection $\map{\iota}{d}{\omega_2}$ with $\iota\restriction(d\cap\omega_2)=\id_{d\cap\omega_2}$, $c(\iota(\alpha),\iota(\beta)) =  e(\alpha,\beta)$ for all $\alpha,\beta\in d$ with $\alpha\neq\beta$ and  $r(\iota(\alpha))=s(\alpha)$ for all $\alpha\in d$. These computations allow us to conclude that the functions $c$ and $r$  possess the properties \eqref{item:FiniteExt} and \eqref{item:Color} listed in Lemma \ref{EquivalentLemma}. We can  therefore apply Lemma \ref{EquivalentLemma} to find  an $(\M,\N)$-full structure of cardinality  $\aleph_2$.  
\end{proof}

\begin{proof}[Proof of Theorem \ref{main2}]
 In the following, we let $\map{\goedel{\cdot}{\cdot}}{\On\times\On}{\On}$ denote the \emph{G\"odel pairing function}. 
 In addition, let $\map{p_0,p_1}{\omega_1}{\omega_1}$ denote the corresponding \emph{projections} on $\omega_1$, {i.e.} the unique pair of functions on $\omega_1$ with $\alpha=\goedel{p_0(\alpha)}{p_1(\alpha)}$ for all $\alpha<\omega_1$. 
 For each $0<\alpha<\omega_2$, fix a surjection $\map{s_\alpha}{\omega_1}{\alpha}$. In addition, pick an enumeration $\seq{\langle e_\xi,s_\xi\rangle}{\xi<\omega_2}$ of all pairs $\langle e,s\rangle$ of functions with $\map{e}{[d]^2}{\omega_1}$ and $\map{s}{d}{\omega_2}$ for some  finite subset $d$ of $\omega_2+\omega$ such that the enumeration has the property that for all  $\zeta<\omega_2$, the set $\Set{\xi<\omega_2}{e_\xi=e_\zeta}$ is unbounded in $\omega_2$.

 Given $0<\alpha<\omega_2$, a finite subset $F$ of $\omega_1$, a map $\map{c_0}{[\alpha]^2}{\omega_1}$ and a monotone map $\map{m_0}{[\alpha]^{{<}\omega}}{[\alpha]^{{<}\omega}}$, we call a pair $(c,m)$ an \emph{$F$-good extension} of $(c_0,m_0)$ if there exists a function $\map{g}{\alpha}{\omega_1}$ such that the following statements hold: 
 \begin{itemize}
  \item $F\cap\ran{g}=\emptyset$. 
  
  \item $\map{c}{[\alpha+1]^2}{\omega_1}$ is a map with $c\restriction[\alpha]^2=c_0$ and $$c(\alpha,\beta) ~ = ~ \goedel{g(\beta)}{\min(s_\alpha^{{-}1}\{\beta\})}$$ for all $\beta<\alpha$.
  
  \item $\map{m}{[\alpha+1]^{{<}\omega}}{[\alpha+1]^{{<}\omega}}$ is a map with $m\restriction[\alpha]^{{<}\omega}=m_0$ and the property that for all $a\in[\alpha]^{{<}\omega}$, there exists $a\subseteq b\in[\alpha]^{{<}\omega}$ satisfying $$m(a\cup\{\alpha\}) ~ = ~ m_0(b)\cup\{\alpha\}$$ and $$\Set{\gamma\in\alpha\setminus\{\beta\}}{p_0(c_0(\beta,\gamma))=g(\gamma)} ~ \subseteq ~ m_0(b)$$ for all $\beta\in m_0(b)$. 
 \end{itemize}
 Note that $(\diag)$ implies that an $F$-good extension of $(c_0,m_0)$ exists.

 In the following, we construct 
 \begin{itemize}
  \item a strictly increasing sequence $\seq{\alpha_\xi}{\xi<\omega_1}$ of ordinals less than $\omega_2$ with $\alpha_0=0$, 
  
  \item a sequence $\seq{\map{c_\xi}{[\alpha_\xi]^2}{\omega_1}}{\xi<\omega_2}$ of maps with $c_\xi\restriction[\alpha_\zeta]^2=c_\zeta$ for all $\zeta\leq\xi<\omega_2$,  
  
  \item a sequence $\seq{\map{m_\xi}{[\alpha_\xi]^{{<}\omega}}{[\alpha_\xi]^{{<}\omega}}}{\xi<\omega_2}$ of monotone maps satisfying  $m_\xi\restriction[\alpha_\zeta]^{{<}\omega}=m_\zeta$ for all $\zeta\leq\xi<\omega_2$, and 
  
  \item a sequence $\seq{\map{r_\xi}{\alpha_\xi}{\omega_2}}{\xi<\omega_2}$ of functions with $r_\xi\restriction\alpha_\zeta=r_\zeta$ for all $\zeta\leq\xi<\omega_2$. 
 \end{itemize}

 Fix $0<\xi<\omega_2$ and assume that $\alpha_\zeta$, $c_\zeta$ and $m_\zeta$ with the above properties are defined for all $\zeta<\xi$.  
 We define
 \begin{itemize}
  \item $\alpha_*=\sup_{\zeta<\xi}\alpha_\zeta$,
  
  \item $\map{c_*=\bigcup_{\zeta<\xi}c_\zeta}{[\alpha_*]^2}{\omega_1}$,  
  
  \item $\map{m_*=\bigcup_{\zeta<\xi}m_\zeta}{[\alpha_*]^{{<}\omega}}{[\alpha_*]^{{<}\omega}}$,   
  
  \item $\map{r_*}{\alpha_*}{\omega_2}$, and 
  
  \item $d=\dom{s_\xi}\in[\omega_2+\omega]^{{<}\omega}$. 
 \end{itemize}

  If either $d\subseteq\alpha_*$, or $d\cap\omega_2\nsubseteq\alpha_*$, or $c_*\restriction[d\cap\omega_2]^2\neq e_\xi\restriction[d\cap\omega_2]^2$, or ${r_*\restriction(d\cap\omega_2)}\neq {s_\xi\restriction(d\cap\omega_2)}$, or there exist  $\beta,\gamma\in d\cap\omega_2$ with $\beta\neq\gamma$ and $\calA^{e_\xi}_{\beta,\gamma}\nsubseteq\omega_2$, then we say that $\xi$ has \emph{Type $0$}, we set $\alpha_\xi=\alpha_*+1$, we define $r_\xi=r_*\cup\{\langle\alpha_*,0\rangle\}$, and we pick $c_\xi$ and $m_\xi$ such that  the pair $(c_\xi,m_\xi)$ is a $\emptyset$-good extension of $(c_*,m_*)$. 
  In the following, assume that $d\nsubseteq\alpha_*$, $d\cap\omega_2\subseteq\alpha_*$, $c_*\restriction[d\cap\omega_2]^2=e_\xi\restriction[d\cap\omega_2]^2$, $r_*\restriction(d\cap\omega_2)=s_\xi\restriction(d\cap\omega_2)$ and $\calA^{e_\xi}_{\beta,\gamma}\subseteq\omega_2$ for all $\beta,\gamma\in d\cap\omega_2$ with $\beta\neq\gamma$. 
   We then say that $\xi$ has \emph{Type $1$}. 
   Let $\betrag{d\setminus\omega_2}=n>0$, set $\alpha_\xi=\alpha_*+n$ and pick an injection $\map{\iota_\xi}{d}{\alpha_\xi}$ with $\iota_\xi\restriction(d\cap\omega_2)=\id_{d\cap\omega_2}$ and $\iota_\xi[d\setminus\omega_2]=[\alpha_*,\alpha_\xi)$. 
   In addition, set $F=\ran{p_0\circ e_\xi}$. 
   Pick a map $\map{c^*}{[\alpha_\xi]^2}{\omega_1}$ and a monotone map $\map{m^*}{[\alpha_\xi]^{{<}\omega}}{[\alpha_\xi]^{{<}\omega}}$ such that $c^*\restriction[\alpha_*]^2=c_*$, $m^*\restriction[\alpha_*]^{{<}\omega}=m_*$ and for all $i<n$, the pair $(c^*\restriction[\alpha_*+i+1]^2,m^*\restriction[\alpha_*+i+1]^{{<}\omega})$ is an $F$-good extension of $(c^*\restriction[\alpha_*+i]^2,m^*\restriction[\alpha_*+i]^{{<}\omega})$. 
   Define $\map{c_\xi}{[\alpha_\xi]^2}{\omega_1}$ to be the unique map with $c_\xi(\iota_\xi(\beta),\iota_\xi(\gamma))=e_\xi(\beta,\gamma)$ for all $\beta,\gamma\in d$ with $\beta\neq\gamma$ and $c_\xi(\beta,\gamma)=c^*(\beta,\gamma)$ for all $\beta<\gamma<\alpha_\xi$ with $\{\beta,\gamma\}\nsubseteq\ran{\iota_\xi}$. 
 Our assumptions on $e_\xi$ then ensure that $c_\xi\restriction[\alpha_*]^2=c_*$. 
 In addition, let $\map{m_\xi}{[\alpha_\xi]^{{<}\omega}}{[\alpha_\xi]^{{<}\omega}}$ denote the unique function with $m_\xi\restriction[\alpha_*]^{{<}\omega}=m_*$ and $m_\xi(a)=m^*(a\cup\ran{\iota_\xi})$ for all $a\in[\alpha_\xi]^{{<}\omega}$ with $a\cap[\alpha_*,\alpha_\xi)\neq\emptyset$. 
 Finally, define $r_\xi$ to be the unique function with domain $\alpha_\xi$ satisfying $r_\xi\restriction\alpha_*=r_*$ and $r_\xi(\iota_\xi(\alpha))=s_\xi(\alpha)$ for all $\alpha\in d$.  
   %
   %
   %
  %
  
  We can now define 
  \begin{itemize}
      \item $\map{c  =  \bigcup_{\xi<\omega_2}c_\xi}{[\omega_2]^2}{\omega_1}$, 
      
      \item $\map{m  =  \bigcup_{\xi<\omega_2}m_\xi}{[\omega_2]^{{<}\omega}}{[\omega_2]^{{<}\omega}}$, and 
      
      \item $\map{r=\bigcup_{\xi<\omega_2}r_\xi}{\omega_2}{\omega_2}$. 
  \end{itemize}  
  Our construction then ensures that $m$ is a monotone map.

  \begin{claim*}
   If $0<\xi<\omega_2$, $a\in[\alpha_\xi]^{{<}\omega}$ and $\alpha,\beta\in m(a)$ with $\alpha\neq\beta$, then $\calA^{c_\xi}_{\alpha,\beta}\subseteq m(a)$. 
  \end{claim*}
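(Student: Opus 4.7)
The plan is to prove this by induction on $\xi$. Setting $\alpha_*$, $c_*$, $m_*$ as in the construction, fix $a\in[\alpha_\xi]^{{<}\omega}$ and $\alpha,\beta\in m_\xi(a)$ with $\alpha\neq\beta$, and let $\gamma\in\calA^{c_\xi}_{\alpha,\beta}$; the goal is $\gamma\in m_\xi(a)$.

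The first step is to identify $m_\xi(a)$ via property~(3) of the relevant $F$-good extensions. In Type~0, if $\alpha_*\notin a$ then $m_\xi(a)=m_*(a)$, while if $\alpha_*\in a$, property~(3) supplies a $b\supseteq a\setminus\{\alpha_*\}$ with $m_\xi(a)=m_*(b)\cup\{\alpha_*\}$ and with the localization $\{\eta\in\alpha_*\setminus\{\beta'\}:p_0(c_*(\beta',\eta))=g(\eta)\}\subseteq m_*(b)$ for every $\beta'\in m_*(b)$. In Type~1, iterating the $n$ successive $F$-good extensions (where $F=\ran{p_0\circ e_\xi}$) via an inner induction on the step produces a $b\supseteq(a\cap\alpha_*)\cup(d\cap\omega_2)$ and intermediate sets $b_j\in[\alpha_*+j]^{{<}\omega}$ satisfying the bookkeeping invariant $m^*(b_j)=m_*(b)\cup[\alpha_*,\alpha_*+j)$ together with the corresponding agreement-localization statement at each step. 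When $a\cap[\alpha_*,\alpha_\xi)\neq\emptyset$, this gives $m_\xi(a)=m_*(b)\cup[\alpha_*,\alpha_\xi)$; in particular $(d\cap\omega_2)\cup[\alpha_*,\alpha_\xi)\subseteq m_\xi(a)$.

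Next comes the case analysis on the location of $\alpha$, $\beta$, $\gamma$. When $\alpha,\beta<\alpha_*$ and $\gamma<\alpha_*$, the relation $c_\xi\restriction[\alpha_*]^2=c_*$ reduces the problem to $\gamma\in\calA^{c_*}_{\alpha,\beta}$, and picking $\zeta<\xi$ with $a\cup m_*(a)\cup\{\gamma\}\subseteq\alpha_\zeta$ lets the inductive hypothesis put $\gamma\in m_\zeta(a)=m_*(a)\subseteq m_\xi(a)$. When $\alpha,\beta<\alpha_*$ but $\gamma\in[\alpha_*,\alpha_\xi)$, the agreement is impossible: in Type~0 because $p_1(c_\xi(\alpha_*,\cdot))=\min(s_{\alpha_*}^{{-}1}\{\cdot\})$ separates $\alpha$ from $\beta$; in Type~1 by a trichotomy using the same tag distinctness (when neither of $\alpha,\beta$ lies in $d\cap\omega_2$), the disjointness $F\cap\ran{g_i}=\emptyset$ (when exactly one does), and the Type~1 hypothesis $\calA^{e_\xi}_{\alpha,\beta}\subseteq\omega_2$ (when both do). When at least one of $\alpha,\beta$ is \emph{new} (lies in $[\alpha_*,\alpha_\xi)$), agreements at $\gamma\in(d\cap\omega_2)\cup[\alpha_*,\alpha_\xi)$ are automatic, and for $\gamma<\alpha_*$ outside $d\cap\omega_2$ the equations reduce to $p_0(c_*(\alpha',\gamma))=g_j(\gamma)$ for $\alpha'$ the \emph{old} side and $j$ the index of the new side, whereupon property~(3) puts $\gamma\in m^*(b_j)\subseteq m_\xi(a)$.

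I expect the main obstacle to be the Type~1 subcase in which both $\alpha$ and $\beta$ lie in $[\alpha_*,\alpha_\xi)$. Writing $\alpha=\alpha_*+j_\alpha$ and $\beta=\alpha_*+j_\beta$ with $j_\alpha<j_\beta$, one must first use the bookkeeping invariant to see that $\alpha\in m^*(b_{j_\beta})$, and then apply property~(3) of the $F$-good extension adding $\alpha_*+j_\beta$ with the role of ``$\beta'$'' played by $\alpha$ to conclude that $\gamma\in m^*(b_{j_\beta})=m_*(b)\cup[\alpha_*,\alpha_*+j_\beta)\subseteq m_\xi(a)$. Setting up this bookkeeping invariant for the iterated $F$-good extensions is the real work; once it is in place, every case closes uniformly.
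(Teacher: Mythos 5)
Your proposal is correct and takes essentially the same route as the paper's proof: induction on $\xi$, the Type~0/Type~1 split, the $p_1$-tag injectivity via $s_{\alpha}$, the disjointness of $F=\ran{p_0\circ e_\xi}$ from the ranges of the diagonalizing functions, the hypothesis $\calA^{e_\xi}_{\alpha,\beta}\subseteq\omega_2$, and the localization clause of $F$-good extensions; your explicit bookkeeping invariant for the $n$ iterated extensions is just an unpacked version of the paper's intermediate statement that $\calA^{c^*}_{\beta,\gamma}\subseteq m^*(a)$ for $\beta,\gamma\in m^*(a)$, combined with the observation that $\ran{\iota_\xi}\subseteq m_\xi(a)$ once $a$ meets $[\alpha_*,\alpha_\xi)$. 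The only cosmetic slip is writing $m_*(a)$ in the all-old case when $a$ may contain new points; there one applies the induction hypothesis to the set $b$ produced in your first paragraph, exactly as your setup intends.
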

  
  \begin{proof}[Proof of the Claim]
   Assume that $0<\xi<\omega_2$ has the property  that $\calA^{c_\zeta}_{\alpha,\beta}\subseteq m(a)$ holds for all $0<\zeta<\xi$, every $a\in[\alpha_\zeta]^{{<}\omega}$ and all $\alpha,\beta\in m(a)$ with $\alpha\neq\beta$. 
   Set $\alpha_*=\sup_{\zeta<\xi}\alpha_\zeta$ and $c_*=c\restriction[\alpha_*]^2$.  
   Then our assumptions imply that $\calA^{c_*}_{\beta,\gamma}\subseteq m(a)$ holds for every $a\in[\alpha_*]^{{<}\omega}$ and all $\beta,\gamma\in m(a)$ with $\beta\neq\gamma$.

   First, assume that $\xi$ has Type $0$. Then $\alpha_\xi=\alpha_*+1$. Fix $a\in[\alpha_\xi]^{{<}\omega}$ and let $\map{g}{\alpha_*}{\omega_1}$ denote the function used in the construction of $c_\xi$. 
   If $\gamma<\beta<\alpha_*$, then  $$s_{\alpha_*}(p_1(c_\xi(\alpha_*,\beta)))  ~ = ~  \beta ~ \neq ~ \gamma ~ = ~ s_{\alpha_*}(p_1(c_\xi(\alpha_*,\gamma)))$$  and therefore $c_\xi(\alpha_*,\beta)\neq c_\xi(\alpha_*,\gamma)$. 
  In particular, if $a\subseteq\alpha_*$ and $\beta,\gamma\in m(a)$ with $\beta\neq\gamma$, then $\calA^{c_\xi}_{\beta,\gamma}=\calA^{c_*}_{\beta,\gamma}\subseteq m(a)$. 
  In the following, assume that $\alpha_*\in a$. Then there exists $a\cap\alpha_*\subseteq b\in[\alpha_*]^{{<}\omega}$ with $m(a)=m(b)\cup\{\alpha_*\}$ and $$\Set{\gamma\in\alpha_*\setminus\{\beta\}}{p_0(c_*(\beta,\gamma))=g(\gamma)} ~ \subseteq ~ m(b)$$ for all $\beta\in m(b)$. 
  Now, if $\beta,\gamma\in m(a)\cap\alpha_*=m(b)$ with $\beta\neq\gamma$, then the above computations show that $\calA^{c_\xi}_{\beta,\gamma}=\calA^{c_*}_{\beta,\gamma}\subseteq m(b)\subseteq m(a)$. 
   Moreover, if we have $\beta\in m(a)\cap\alpha_*=m(b)$ and $\gamma\in\alpha_*\setminus\{\beta\}$ satisfying  $c_\xi(\alpha_*,\gamma)=c_\xi(\beta,\gamma)$, then $p_0(c_*(\beta,\gamma))=g(\gamma)$ and therefore $\gamma\in m(b)\subseteq m(a)$. 
   This shows that $\calA^{c_\xi}_{\alpha_*,\beta}\subseteq m(a)$ holds for all $\beta\in m(a)$ with $\alpha_*\neq\beta$.

    Now, assume that $\xi$ has Type $1$. 
    Let $d$ denote the unique finite subset of $\omega_2+\omega$ with the property that the domain of $e_\xi$ is equal to the set $[d]^2$. 
    In addition, let  $\map{c^*}{[\alpha_\xi]^2}{\omega_1}$ and $\map{m^*}{[\alpha_\xi]^{{<}\omega}}{[\alpha_\xi]^{{<}\omega}}$ denote the functions used in the construction of $c_\xi$. 
    Then the above computations show that $\calA^{c^*}_{\beta,\gamma}\subseteq\alpha_*$ holds for all $\gamma<\beta<\alpha_*$ and $\calA^{c^*}_{\beta,\gamma}\subseteq m^*(a)$ holds for every $a\in[\alpha_\xi]^{{<}\omega}$ and all $\beta,\gamma\in m^*(a)$ with $\beta\neq \gamma$.

    \begin{subclaim*}
     If $\gamma<\beta<\alpha_*$, then $\calA^{c_\xi}_{\beta,\gamma}\subseteq\alpha_*$. 
    \end{subclaim*}
    
    \begin{proof}[Proof of the Subclaim]
     Assume, towards a contradiction, that $c_\xi(\beta,\delta)=c_\xi(\gamma,\delta)$ holds for some $\alpha_*\leq\delta<\alpha_\xi$. 
     Then  $\delta\in\ran{\iota_\xi}$ and we know that $\{\beta,\gamma\}\nsubseteq\ran{\iota_\xi}$, because otherwise we would have $\beta,\gamma\in d\cap\omega_2$ and $\calA^{e_\xi}_{\beta,\gamma}\nsubseteq\omega_2$. 
     Now, if $\gamma\notin\ran{\iota_\xi}$, then the fact that $$p_0(c_\xi(\beta,\delta)) ~ = ~ p_0(c_\xi(\gamma,\delta)) ~ = ~ p_0(c^*(\gamma,\delta)) ~ \notin ~ \ran{p_0\circ e_\xi}$$ implies that $\beta\notin\ran{\iota_\xi}$. 
     The same argument shows that $\beta\notin\ran{\iota_\xi}$ implies that $\gamma\notin\ran{\iota_\xi}$. 
     Hence, we can conclude that $\beta$ and $\gamma$ are both not contained in $\ran{\iota_\xi}$. But then our assumption implies that $c^*(\beta,\delta)=c^*(\gamma,\delta)$ and, by the above remarks, this shows that $\delta<\alpha_*$, a contradiction. 
    \end{proof}

    Fix $a\in[\alpha_\xi]^{{<}\omega}$. 
    If $a\subseteq\alpha_*$, then our subclaim shows that $\calA^{c_\xi}_{\beta,\gamma}=\calA^{c_*}_{\beta,\gamma}\subseteq m(a)$ holds for all $\beta,\gamma\in m(a)$ with $\beta\neq\gamma$. 
    In the following, assume that $a\cap[\alpha_*,\alpha_\xi)\neq\emptyset$. Then $m(a)=m^*(a\cup\ran{\iota_\xi})$. 
    Pick $\beta,\gamma\in m(a)$ with $\beta\neq\gamma$ and $\delta\in\calA^{c_\xi}_{\beta,\gamma}\setminus\ran{\iota_\xi}$. 
    Then the definition of $c_\xi$ ensures that $c^*(\beta,\delta)=c^*(\gamma,\delta)$ and hence we know that $\delta\in\calA^{c^*}_{\beta,\gamma}\subseteq m^*(a\cup\ran{\iota_\xi}))=m(a)$ holds. This shows that $$\calA^{c_\xi}_{\beta,\gamma} ~ \subseteq ~ \ran{\iota_\xi} ~ \cup ~ \calA^{c^*}_{\beta,\gamma} ~ \subseteq ~ m^*(a\cup\ran{\iota_\xi}) ~ = ~ m(a)$$ holds for all $\beta,\gamma\in m(a)$ with $\beta\neq\gamma$.     
    %
    %
    %
  \end{proof}

  \begin{claim*}
   If $\zeta<\omega_2$ and $\beta<\alpha<\alpha_\zeta$, then $\calA^c_{\alpha,\beta}\subseteq\alpha_\zeta$. 
  \end{claim*}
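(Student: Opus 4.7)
The plan is to proceed by induction on $\zeta<\omega_2$. The case $\zeta=0$ is vacuous since $\alpha_0=0$, so assume the claim for all $\zeta'<\zeta$. Fix $\beta<\alpha<\alpha_\zeta$ and suppose, aiming for a contradiction, that some $\delta\in\calA^c_{\alpha,\beta}$ satisfies $\delta\geq\alpha_\zeta$. Let $\xi$ be minimal with $\delta<\alpha_\xi$; then $\xi>\zeta$ and, setting $\alpha_*=\sup_{\xi'<\xi}\alpha_{\xi'}\geq\alpha_\zeta$, we have $\delta\in[\alpha_*,\alpha_\xi)$ while $\beta<\alpha<\alpha_*$. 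The task reduces to showing that $c_\xi(\alpha,\delta)\neq c_\xi(\beta,\delta)$, and I would split according to the Type of the stage $\xi$.

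If $\xi$ has Type $0$, then $\alpha_\xi=\alpha_*+1$ forces $\delta=\alpha_*$, and the definition of an $F$-good extension gives $c_\xi(\alpha_*,\gamma)=\goedel{g(\gamma)}{\min(s_{\alpha_*}^{-1}\{\gamma\})}$ for $\gamma\in\{\alpha,\beta\}$. Since $\alpha\neq\beta$, the fibres $s_{\alpha_*}^{-1}\{\alpha\}$ and $s_{\alpha_*}^{-1}\{\beta\}$ are disjoint, hence their minima are distinct and the second G\"odel coordinates of $c_\xi(\alpha_*,\alpha)$ and $c_\xi(\alpha_*,\beta)$ differ, ruling out the agreement.

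If $\xi$ has Type $1$, I would essentially replay the subclaim embedded in the proof of the preceding claim. Since $\delta\in[\alpha_*,\alpha_\xi)\subseteq\ran{\iota_\xi}$, write $\delta=\iota_\xi(\delta')$ for a unique $\delta'\in d\setminus\omega_2$. If both $\alpha,\beta\in\ran{\iota_\xi}$, then both lie in $d\cap\omega_2$ and the equality $c_\xi(\alpha,\delta)=c_\xi(\beta,\delta)$ translates (via $\iota_\xi\restriction(d\cap\omega_2)=\id$) to $\delta'\in\calA^{e_\xi}_{\alpha,\beta}$; since $\delta'\notin\omega_2$, this contradicts the Type $1$ hypothesis $\calA^{e_\xi}_{\alpha,\beta}\subseteq\omega_2$. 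Otherwise, at least one of $\alpha,\beta$ lies outside $\ran{\iota_\xi}$; the $F$-goodness of the iterated extension producing $c^*$ (with $F=\ran{p_0\circ e_\xi}$) then forces both to lie outside, since the first G\"odel coordinate $p_0(c^*(\gamma,\delta))$ avoids $F$ whenever $\gamma\notin\ran{\iota_\xi}$ while $p_0(e_\xi(\gamma,\delta'))\in F$ whenever $\gamma\in d\cap\omega_2$. With both $\alpha,\beta\notin\ran{\iota_\xi}$, the equality collapses to $c^*(\alpha,\delta)=c^*(\beta,\delta)$, and the Type $0$-style second-coordinate argument applied at the relevant stage of the $c^*$-construction (where $\delta$ was the newly added ordinal) again yields a contradiction.

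The main obstacle is bookkeeping: correctly enumerating the four membership patterns of $\{\alpha,\beta\}$ relative to $\ran{\iota_\xi}$ and invoking either $F$-goodness (to separate first G\"odel coordinates) or the disjointness of the fibres of $s_{\alpha_*+i}$ (to separate second G\"odel coordinates). Since the subclaim in the preceding proof already executes precisely this case split, the present argument should be a direct transcription with essentially no new content.
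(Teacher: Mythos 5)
Your proof is correct and follows essentially the same route as the paper: locate the stage $\xi$ at which a potential agreement witness $\delta\geq\alpha_\zeta$ was added, split by the Type of $\xi$, and in the Type $1$ case distinguish the membership patterns of $\alpha,\beta$ in $\ran{\iota_\xi}$, using the hypothesis $\calA^{e_\xi}_{\alpha,\beta}\subseteq\omega_2$, the first G\"odel coordinate (avoidance of $F=\ran{p_0\circ e_\xi}$) for the mixed cases, and the second G\"odel coordinate (minima of the fibres of $s_\delta$) when both lie outside. The only cosmetic difference is your induction-on-$\zeta$ wrapper, whose inductive hypothesis is never actually used.
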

  
  \begin{proof}[Proof of the Claim]
   Fix $\alpha_\zeta\leq\gamma<\omega_2$. Let $\xi<\omega_2$ be minimal with $\gamma<\alpha_\xi$ and let $d$ be the unique finite subset of $\omega_2+\omega$ such that the domain of $e_\xi$ is equal to $[d]^2$. 
   
   First, assume that $\xi$ has Type $0$. Then $\alpha_\xi=\gamma+1$ and the above constructions ensure that $$s_\gamma(p_1(c(\alpha,\gamma))) ~ = ~ \alpha ~ \neq ~ \beta ~ = ~ s_\gamma(p_1(c(\beta,\gamma)))$$ holds. 
   %
   %
   This allows us to conclude that $c(\alpha,\gamma)\neq c(\beta,\gamma)$ holds in this case. 
   
   Next, assume that $\xi$ has Type $1$. Set $\alpha_*=\sup_{\eta<\xi}\alpha_\eta$ and let $\map{c^*}{[\alpha_\xi]^2}{\omega_1}$ denote the function used in the construction of $c_\xi$. 
  Then $\alpha<\alpha_\zeta\leq\alpha_*\leq\gamma<\alpha_\xi$ and hence $\gamma\in\ran{\iota_\xi}$. Moreover, the above computations show that $c^*(\alpha,\gamma)\neq c^*(\beta,\gamma)$. 
   Set $F=\ran{p_0\circ e_\xi}$ and $n=\betrag{d\setminus\omega_2}$. 
   Now, if $\alpha,\beta\in\ran{\iota_\xi}$, then $\alpha,\beta\in d\cap\alpha_*$ and, since $\calA^{e_\xi}_{\alpha,\beta}\subseteq\omega_2$,  our construction ensures that $c(\alpha,\gamma)\neq c(\beta,\gamma)$. 
  Next, if $\alpha\in\ran{\iota_\xi}$ and $\beta\notin\ran{\iota_\xi}$, then we have $c(\alpha,\gamma)\in\ran{e_\xi}$, $p_0(c(\beta,\gamma))=p_0(c^*(\beta,\gamma))\notin\ran{p_0\circ e_\xi}$ and therefore we know that $c(\alpha,\gamma)\neq c(\beta,\gamma)$.    
  The same argument shows that, if $\alpha\notin\ran{\iota_\xi}$ and $\beta\in\ran{\iota_\xi}$, then $c(\alpha,\gamma)\neq c(\beta,\gamma)$. 
  Finally, if $\alpha,\beta\notin\ran{\iota_\xi}$, then  $c(\alpha,\gamma)=c^*(\alpha,\gamma)\neq c^*(\beta,\gamma)=c(\beta,\gamma)$.  
  \end{proof}

  \begin{claim*}
   If $a\in[\omega_2]^{{<}\omega}$ and  $\alpha,\beta\in m(a)$ with $\alpha\neq\beta$, then $\calA^c_{\alpha,\beta}\subseteq m(a)$. 
  \end{claim*}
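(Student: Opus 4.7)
The plan is to derive this global claim as an immediate corollary of the two preceding claims, using the compatibility of the sequences $\seq{c_\xi}{\xi<\omega_2}$ and $\seq{m_\xi}{\xi<\omega_2}$ built into the construction. The key observation is that at each stage only finitely many new ordinals are added, so each $\alpha_\xi$ lies strictly below $\omega_2$; since the sequence is strictly increasing and indexed by $\omega_2$, its supremum is $\omega_2$, and therefore every finite subset $a$ of $\omega_2$ is contained in some $\alpha_\zeta$ with $\zeta<\omega_2$.

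I would fix such a $\zeta$ with $a\subseteq\alpha_\zeta$ and first observe that, because $m\restriction[\alpha_\zeta]^{{<}\omega}=m_\zeta$ takes values in $[\alpha_\zeta]^{{<}\omega}$, we have $m(a)=m_\zeta(a)\subseteq\alpha_\zeta$, so in particular $\alpha,\beta<\alpha_\zeta$. The middle claim, applied to $\zeta$ and to the pair $\alpha,\beta$ (after possibly relabelling so that the larger one plays the role of $\alpha$), then yields $\calA^c_{\alpha,\beta}\subseteq\alpha_\zeta$. Since $c\restriction[\alpha_\zeta]^2=c_\zeta$, this gives the identification $\calA^c_{\alpha,\beta}=\calA^{c_\zeta}_{\alpha,\beta}$. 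Applying the first of the three claims to $\zeta$, $a\in[\alpha_\zeta]^{{<}\omega}$ and $\alpha,\beta\in m_\zeta(a)$ then produces $\calA^{c_\zeta}_{\alpha,\beta}\subseteq m_\zeta(a)=m(a)$, which is the desired inclusion.

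I do not expect a real obstacle here, since the substantive combinatorial work has already been done in the two preceding claims. The only bookkeeping is to verify that $\sup_{\xi<\omega_2}\alpha_\xi=\omega_2$, which follows from the fact that a strictly increasing transfinite sequence of length $\omega_2$ in $\omega_2$ must be cofinal (each $\alpha_\xi$ has cardinality at most $\max(|\xi|,\omega)\leq\aleph_1$), and to keep track of which restriction of $c$, $m$ is being used so that the values computed inside the stage-$\zeta$ approximations agree with those coming from the global maps.
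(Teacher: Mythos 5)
Your argument is correct and is essentially the paper's own proof: both use the second claim to bound $\calA^c_{\alpha,\beta}$ below a stage $\alpha_\zeta$ containing $m(a)$, then identify it with $\calA^{c_\zeta}_{\alpha,\beta}$ and apply the first claim. The only (immaterial) difference is that you pick $\zeta$ with $a\subseteq\alpha_\zeta$ and deduce $m(a)\subseteq\alpha_\zeta$, whereas the paper chooses the stage to contain $m(a)$ directly.
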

  
 \begin{proof}
  Pick $\xi<\omega_2$ with $m(a)\subseteq\alpha_\xi$. Then the previous claim shows that $\calA^c_{\alpha,\beta}\subseteq\alpha_\xi$ and we can use our first claim to conclude that $\calA^c_{\alpha,\beta}=\calA^{\alpha_\xi}_{\alpha,\beta}\subseteq m(a)$. 
 \end{proof}

  \begin{claim*}
   Given   a finite subset $d$ of $\omega_2+\omega$, a function  $\map{e}{[d]^2}{\omega_1}$ and a function $\map{s}{d}{\omega_2}$ such that  $e\restriction[d\cap\omega_2]^2 =  c\restriction[d\cap\omega_2]^2$, $r\restriction(d\cap\omega_2)=s\restriction(d\cap\omega_2)$ and   $\calA^e_{\alpha,\beta} \subseteq \omega_2$ for all $\alpha,\beta\in d\cap\omega_2$ with $\alpha\neq\beta$,  there exists an injection $\map{\iota}{d}{\omega_2}$ with $\iota\restriction(d\cap\omega_2)=\id_{d\cap\omega_2}$, $r(\iota(\alpha))=s(\alpha)$ for all $\alpha\in d$ and $c(\iota(\alpha),\iota(\beta))  =  e(\alpha,\beta)$ for all $\alpha,\beta\in d$ with $\alpha\neq\beta$. 
  \end{claim*}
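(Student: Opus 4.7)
The plan is to exploit the bookkeeping built into the recursion: since the enumeration $\seq{\langle e_\xi,s_\xi\rangle}{\xi<\omega_2}$ lists all such pairs with each appearing unboundedly often in $\omega_2$, I will locate a sufficiently late stage $\xi$ at which the construction was forced to realize exactly the pattern $\langle e,s\rangle$ that we are handed, and then read the desired injection $\iota$ directly off the map $\iota_\xi$ defined there.

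First I would dispose of the trivial case $d\subseteq\omega_2$. Here $d\cap\omega_2=d$, so the hypotheses $e\restriction[d\cap\omega_2]^2=c\restriction[d\cap\omega_2]^2$ and $s\restriction(d\cap\omega_2)=r\restriction(d\cap\omega_2)$ degenerate to $e=c\restriction[d]^2$ and $s=r\restriction d$, and $\iota=\id_d$ works. So assume $d\setminus\omega_2\neq\emptyset$. Using the property of the enumeration, pick $\xi<\omega_2$ with $\langle e_\xi,s_\xi\rangle=\langle e,s\rangle$ that is large enough that $d\cap\omega_2\subseteq\alpha_*:=\sup_{\zeta<\xi}\alpha_\zeta$. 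I would then verify that $\xi$ lands in the Type $1$ branch of the case split: the condition $d\nsubseteq\alpha_*$ holds because $d$ meets $[\omega_2,\omega_2+\omega)$ while $\alpha_*<\omega_2$; the condition $d\cap\omega_2\subseteq\alpha_*$ is by choice of $\xi$; the identity $c_*\restriction[d\cap\omega_2]^2=e_\xi\restriction[d\cap\omega_2]^2$ follows from $c_*=c\restriction[\alpha_*]^2$ together with the hypothesis on $e$; the corresponding identity for $r_*$ and $s_\xi$ is analogous; and the disjointness condition $\calA^{e_\xi}_{\beta,\gamma}\subseteq\omega_2$ for $\beta\neq\gamma$ in $d\cap\omega_2$ is exactly one of the hypotheses of the claim.

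Once Type $1$ is confirmed at stage $\xi$, I would set $\iota:=\iota_\xi$. By definition $\iota$ is an injection from $d$ into $\alpha_\xi\subseteq\omega_2$ that fixes $d\cap\omega_2$ pointwise, and the construction of $c_\xi$ supplies $c(\iota(\alpha),\iota(\beta))=c_\xi(\iota_\xi(\alpha),\iota_\xi(\beta))=e_\xi(\alpha,\beta)=e(\alpha,\beta)$ for distinct $\alpha,\beta\in d$ (for pairs entirely inside $d\cap\omega_2$ this uses the hypothesis $e\restriction[d\cap\omega_2]^2=c\restriction[d\cap\omega_2]^2$, and for the remaining pairs it is built into the definition of $c_\xi$); similarly $r(\iota(\alpha))=r_\xi(\iota_\xi(\alpha))=s_\xi(\alpha)=s(\alpha)$ for all $\alpha\in d$. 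Transfer via $c=\bigcup_\eta c_\eta$ and $r=\bigcup_\eta r_\eta$ completes the argument.

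The principal obstacle is essentially already paid for: all of the genuinely combinatorial work (finite agreements, closure under $\calA^c$, and the existence of $F$-good extensions witnessed by $(\diag)$) has been discharged in the recursion and in the earlier claims of the proof. What remains here is purely a bookkeeping step, and the only delicate point is the correct matching of hypotheses to the Type $1$ branch of the case split, which is a direct unpacking of definitions.
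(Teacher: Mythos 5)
Your proposal is correct and follows essentially the same route as the paper: use the bookkeeping enumeration to find a stage $\xi$ beyond $d\cap\omega_2$ with $\langle e_\xi,s_\xi\rangle=\langle e,s\rangle$, check that the hypotheses force $\xi$ into the Type $1$ case, and take $\iota=\iota_\xi$, reading off the required identities from the definitions of $c_\xi$ and $r_\xi$. Your explicit treatment of the case $d\subseteq\omega_2$ corresponds to the paper's ``without loss of generality'' reduction and is otherwise identical in substance.
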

  
  \begin{proof}[Proof of the Claim]
   Without loss of generality, we may assume that $d\setminus\omega_2\neq\emptyset$. 
   The above choices ensure that we can find $\xi<\omega_2$ with the property that $e=e_\xi$, $s=s_\xi$ and $d\cap\omega_2\subseteq\alpha_*=\sup_{\zeta<\xi}\alpha_\zeta$. 
   We define  $c_*=c\restriction[\alpha_*]^2=\bigcup_{\zeta<\xi}c_\zeta$ and $r_*=r\restriction\alpha_*=\bigcup_{\zeta<\xi}r_\zeta$. 
   We then know that $d\nsubseteq\alpha_*$, $d\cap\omega_2\subseteq\alpha_*$, $c_*\restriction[d\cap\omega_2]^2=e_\xi\restriction[d\cap\omega_2]^2$, $r_*\restriction(d\cap\omega_2)=s_\xi\restriction(d\cap\omega_2)$ and $\calA^{e_\xi}_{\alpha,\beta}\subseteq\omega_2$ for all $\alpha,\beta\in d\cap\omega_2$ with $\alpha\neq\beta$. 
   In particular, this shows that $\xi$ has Type $1$ and $\map{\iota_\xi}{d}{\alpha_\xi}$ is an injection with $\iota_\xi\restriction(d\cap\omega_2)=\id_{d\cap\omega_2}$, $$r(\iota_\xi(\alpha)) ~ = ~ r_\xi(\iota(\alpha)) ~ = ~ s_\xi(\alpha)$$ for all $\alpha\in d$ and $$c(\iota_\xi(\beta),\iota_\xi(\gamma)) ~ = ~ c_\xi(\iota_\xi(\alpha),\iota_\xi(\beta)) ~ = ~ e_\xi(\alpha,\beta)$$ for all $\alpha,\beta\in d$ with $\alpha\neq\beta$. 
  \end{proof}
  
  This completes the proof of the theorem. 
\end{proof}


\section{Concluding remarks and restating the problem}

We summarize the current situation of the problem motivating the results of this paper: Hjorth proved that there exists some countable model $\M$ that belongs to the constructible universe $\LL$ and which characterizes $\aleph_1$ in all transitive models of $\ZFC$. Using the Scott sentence of $\M$, he constructed two complete sentences, call them $\sigma_1$ and $\sigma_2$, using what we called the first and the second Hjorth construction. Moreover, in all transitive models of $\ZFC$, exactly one of these sentences characterizes $\aleph_2$. 
If $\CH$ holds, then $\sigma_1$ characterizes $\aleph_1$ and $\sigma_2$ characterizes $\aleph_2$. If $(\diag)$ holds (and $\CH$  necessarily fails by Proposition \ref{noch}), then $\sigma_1$ characterizes $\aleph_2$ and $\sigma_2$ characterizes $\aleph_3$.

Therefore, Hjorth's solution to the problem of characterizing $\aleph_2$ is dependent on the underlying model of set theory. One may ask whether the same holds true for $\aleph_3$ and, in general, for successor $\aa$ with $2<\alpha<\omega_1$. For $\alpha<\omega$, this is easily seen to be true, because Hjorth's characterization of $\aleph_3$ uses inductively the characterization of $\aleph_2$ etc. For $\alpha>\omega$ our construction does not yield an answer. One would have to extend our results for functions from $\omega_1$ to $\omega_1$ into results for functions from $\omega_{\omega+1}$ to $\omega_{\omega+1}$. However, we think the main question here is how to characterize $\aleph_\alpha$, $\alpha<\omega_1$, \emph{in an absolute way}. To make things precise: 

\begin{question}\label{openquestion}
Does there exist a formula $\Phi(v_0,v_1)$ in the language of set theory such that $\ZFC$ proves the following statements hold for all ordinals $\alpha$: 
 \begin{enumerate}
  \item In $\LL$, there exists a unique code\footnote{Using some canonical G\"odelization of $\calL_{\kappa,\omega}$-formulas.} $c$ for a complete $\calL_{\alpha^+,\omega}$-sentence  $\psi_\alpha$ such that $\Phi(\alpha,c)$ holds. 
  
  \item If $\alpha$ is countable and $\psi_\alpha$ is as above, then $\psi_\alpha$ characterizes $\aleph_\alpha$. 
 \end{enumerate}
\end{question}

As we mentioned this is true for limit ordinals $\alpha$. In \cite{BKLdap}, the authors provide a characterization of all $\aleph_n$, for $n$ finite, that is absolute in the way described above. For successor ordinals $\alpha>\omega$ the question remains open. 



 Another canonical way to formulate the existence of absolute characterizations is given by  \emph{Shoenfield absoluteness} (see {\cite[Theorem 13.15]{MR1994835}}) and the fact that $\Sigma^1_3$-statements are upwards absolute between transitive models of set theory with the same ordinals.

\begin{question}
 Is there a $\Sigma^1_3$-formula $\Phi(v_0,v_1)$ in the language of second-order arithmetic with the property that the axioms of $\ZFC$ prove that the following statements hold: 
 \begin{enumerate}
     \item For every real $a$, there is a unique real $b$ such that $\Phi(a,b)$ holds. 
     
     \item If $\alpha$ is a countable ordinal, $c$ is a code for a complete $\calL_{\omega_1,\omega}$-sentence that characterizes $\aleph_\alpha$ and $d$ is a real with the property that  $\Phi(c,d)$ holds, then $d$ is a code for a complete $\calL_{\omega_1,\omega}$-sentence that characterizes $\aleph_{\alpha+1}$. 
 \end{enumerate}
 
     
\end{question}

 %
 Note that, since it is possible to force $\CH$ over a model of $(\diag)$ without adding new real numbers, the results of this paper show that the property of an $\calL_{\omega_1,\omega}$-sentence to characterize  $\aleph_2$ is not absolute between models of set theory with the same real numbers.

In the light of results of Woodin in \cite{MR2723878} that show the existence of a proper class of Woodin cardinals implies that the theory of  $\LL(\RRR)$ with real parameters  is generically absolute, it also seems natural to consider the following question:

\begin{question}
 Is there a formula $\Phi(v_0,v_1)$ in the language of set theory with the property that the theory $\ZFC+\anf{\textit{There exists a proper class of Woodin cardinals}}$ proves the following statements hold: 
 \begin{enumerate}
     \item For every real $a$, there is a unique real $b$ such that $\Phi(a,b)$ holds in $\LL(\RRR)$. 
     
     \item  If $\alpha$ is a countable ordinal, $c$ is a code for a complete $\calL_{\omega_1,\omega}$-sentence that characterizes $\aleph_\alpha$ and $d$ is a real with the property that  $\Phi(c,d)$ holds in $\LL(\RRR)$, then $d$ is a code for a complete $\calL_{\omega_1,\omega}$-sentence that characterizes $\aleph_{\alpha+1}$.
 \end{enumerate} 
\end{question}

 We end this paper by considering the question whether versions of the combinatorial principle $(\diag)$ can hold at  cardinals larger than $\omega_1$. Note that many of the techniques used in the consistency proofs of Sections \ref{diagonal} and \ref{nobpfa} have no direct analogs at higher cardinals. 
 The following question considers two interesting test cases for such generalizations.

\begin{question}
  Are the following statements consistent with the axioms of $\ZFC$?
  \begin{enumerate}
   \item For every sequence $\seq{\map{f_\alpha}{\omega_2}{\omega_2}}{\alpha<\omega_2}$ of functions, there exists a function $\map{g}{\omega_2}{\omega_2}$ with the property that the set $\Set{\xi<\omega_2}{f_\alpha(\xi)=g(\xi)}$ is finite for every $\alpha<\omega_2$. 
   
   \item For every sequence $\seq{\map{f_\alpha}{\omega_\omega}{\omega_\omega}}{\alpha<\omega_\omega}$ of functions, there exists a function $\map{g}{\omega_\omega}{\omega_\omega}$ with the property that the set $\Set{\xi<\omega_\omega}{f_\alpha(\xi)=g(\xi)}$ is finite for every $\alpha<\omega_\omega$. 
  \end{enumerate}
\end{question}


 \bibliographystyle{plain}
\bibliography{references}

\end{document}